\DeclareMathOperator{\doog}{{\mathcal F}(\Delta)}
\DeclareMathOperator{\uHom}{\underline{{Hom}}}
\newcommand{\eps}{\epsilon}
\newcommand{\keps}{\epsilon_{\ast}}
\newcommand{\Hom}{\operatorname{Hom}\nolimits}
\newcommand{\End}{\operatorname{End}\nolimits}
\renewcommand{\Im}{\operatorname{Im}\nolimits}
\newcommand{\Ext}{\operatorname{Ext}\nolimits}
\newcommand{\ext}{\operatorname{ext}\nolimits}
\newcommand{\soc}{\operatorname{soc}\nolimits}
\newcommand{\SO}{\mathrm{SO}}
\newcommand{\Sp}{\mathrm{Sp}}
\newcommand{\SL}{\mathrm{SL}}
\newcommand{\Lie}{\operatorname{Lie}\nolimits}
\begin{document}

\newtheorem{lemma}{Lemma}[section]
\newtheorem{prop}[lemma]{Proposition}
\newtheorem{cor}[lemma]{Corollary}
\newtheorem{theorem}[lemma]{Theorem}
\newtheorem{remark}[lemma]{Remark}
\newtheorem{definition}[lemma]{Definition}
\newtheorem{conj}[lemma]{Conjecture}

\theoremstyle{definition}
\newtheorem{ex}[lemma]{Example}

\title[$\Delta$-filtered modules and $P$-orbits]{$\Delta$-filtered modules and nilpotent 
orbits of a parabolic subgroup in O$_N$}
%\title[Richardson orbits via $\Delta$-filtered modules]{Modelling Richardson orbits 
%for $\SO_{N}$ via $\Delta$-filtered modules}
%\title[$\Delta$-filtered modules]{
%The $\Delta$-filtered modules without self-extension for the
%Auslander algebra of $k[T]/(T^n )\times C_2$}

\author[Baur]{Karin Baur}
\address{Department of Mathematics \\
ETH Z\"urich
R\"amistrasse 101 \\
8092 Z\"urich \\
Switzerland
}
\email{baur@math.ethz.ch}

\author[Erdmann]{Karin Erdmann}
\address{Mathematical Institute \\
24-29 St. Giles \\
Oxford OX1 3LB \\
UK
}
\email{erdmann@maths.ox.ac.uk}

\author[Parker]{Alison Parker}
\address{Department of Pure Mathematics \\
University of Leeds\\
Leeds, LS2 9JT \\
UK
}
\email{parker@maths.leeds.ac.uk}

%\keywords{}
%\subjclass[2000]{Primary:   Secondary: }

\begin{abstract}
We study certain $\Delta$-filtered modules for the Auslander
algebra of $k[T]/T^n\rtimes C_2$ where $C_2$ is the cyclic group 
of order two. 
The motivation of this lies in the problem of describing the $P$-orbit 
structure for the action of a parabolic subgroup $P$ of an orthogonal group.
For any parabolic subgroup of an %even 
orthogonal group we construct a map
from parabolic orbits to $\Delta$-filtered modules and show that in
the case of the Richardson orbit, the resulting module has no self-extensions. 
\end{abstract} 

\maketitle

%%%%%%%%%%%%%%%%%%%%%%%%%%%%%%%
%
\section*{Introduction}
%
%%%%%%%%%%%%%%%%%%%%%%%%%%%%%%%

%In this paper we 
%study certain $\Delta$-filtered modules 
%%the $\Delta$-filtered modules without self-extensions 
%for the Auslander algebra of $k[T]/T^n\rtimes C_2$, 
%where $C_2=\left<g\right>$ is the cyclic group of order two.
%, and where 
%$k$ is an algebraically
%closed of characteristic different from $2$. 
%The action of $C_2$ on the polynomial ring is as follows:
%$gT=-T$; this induces an algebra automorphism. As an abbreviation, 
%let $R_n:=k[T]/T^n$ and set $S_n:=R_n\rtimes C_2$, 
%the skew group algebra.
%As a vector space, $S_n$ is isomorphic to 
%$R_n\otimes k\langle g\rangle$. The multiplication on the skew-group algebra 
%is defined as follows:
%for $a,b \in R_n$ and $g_1,g_2\in C_2$ we have
%$a g_1\circ b g_2:=(a,g_1)(b,g_2)=(a g_1(b), g_1 g_2)$. This can be extended to 
%linear combinations. We will write $(a,g_i)$ as $a\otimes g_i$.

%The motivation to study ($\Delta$-filtered) modules for this algebra comes from the 
%problem of understanding 
%the orbit structure of a parabolic subgroup of an orthogonal group on its nilradical: 

Let $k$ be an algebraically
closed field of characteristic different from $2$, 
and 
let $G$ be a reductive algebraic group over $k$, $P\subset G$ a parabolic subgroup. 
Now $P$ acts on its unipotent radical $U$ by conjugation and on the nilradical 
$\mathfrak{n}=\Lie U$ by the adjoint action. 
By a fundamental result of Richardson (\cite{ri}), this action has an open dense orbit, the 
so-called {\em Richardson orbit} of $P$. But in general, the number of orbits is 
not finite and it is a very hard problem to understand the orbit structure. 
The question of deciding whether $P$ has a finite 
number of orbits in $\mathfrak{n}$ has been asked by Popov and R\"ohrle in~\cite{pv}. 
For groups in characteristic zero or in good characteristic,
%the classical groups (and characteristic $0$ or good for $G$)
the parabolic subgroups 
with finitely many orbits on their nilradical have been classified in a sequence of papers,
~\cite{ro},~\cite{hr97} and~\cite{hr}. 
%Roughly speaking they are the cases where the Levi factor consists of 
%up to five square blocks on the diagonal (see Section~\ref{s:motivation}). 

If $P$ is a parabolic subgroup of $\SL_N$, then there is an explicit  description of 
the $P$-orbits in work of Hille-R\"ohrle \cite{hr}, and Br\"ustle et
al.  \cite{bhrr}, via a connection with a quasi-hereditary algebra, 
namely the Auslander algebra $A_n$ of the truncated polynomial ring 
$R_n:= k[T]/T^n$. They have shown that the $P$-orbits are in bijection
with 
the isomorphism classes of certain $\Delta$-filtered modules of $A_n$
with no self-extensions. This list has finitely many indecpomposable modules,
parametrized as $\Delta(I)$ where $I$ runs through the subsets
of $\{ 1, 2, \ldots, n\}$.

%To be precise: the number $n$ corresponds to the number of square 
%blocks the Levi factor consists of, cf. Section~\ref{s:motivation}. 

Our main goal in this paper is to establish an analogous 
correspondence between $P$-orbits for parabolic subgroups of the special 
orthogonal groups $\SO_N$ and certain 
$\Delta$-filtered modules for the Auslander algebra of
$k[T]/T^n\rtimes C_2$, the skew group ring of one considered by
\cite{bhrr}, where $C_2$ is a cyclic group of order two. 
This article establishes the Auslander 
algebra of $k[T]/T^n\rtimes C_2$ as the correct candidate for such a correspondence. 
There is one major difference as compared with~\cite{bhrr}. In our situation,
a list of all  $\Delta$-filtered modules with no self extensions is
difficult to obtain and may even be infinite.
% to
%be out of reach (and we conjecture that it is not finite). 
This must be expected however, as the construction of the
Richardson elements for $\SO_N$ involves symmetric diagrams and hence 
gives rise to symmetric $\Delta$-dimension vectors. 
Here, we use signed sets $I$, that
is certain subsets of $\{ \pm 1, \pm 2, \ldots, \pm n\}$. 
We associate to each signed set
$I$ another set $J$ (a symmetric complement) and an  extension $E(I,J)$ that is
$\Delta$-filtered with no self extensions and has the required
symmetric $\Delta$-dimension vector. These extensions may then
be used to construct a $\Delta$-filtered module that corresponds to
the Richardson element, using the work of Baur \cite{ba}
and Baur and Goodwin \cite{bg} on orthogonal Lie algebras.

We now  summarise our paper. 
The first section describes the problem of determining the parabolic orbits 
in the nilradical and introduce the notation from the side of $P$-orbits. We also 
recall the correspondence between $P$-orbits and $\Delta$-filtered modules for  
the Auslander algebra of $k[T]/T^n$ in case $P$ is a 
subgroup of $\SL_N$ as given in ~\cite{hr} and~\cite{bhrr}. 

The key ingredient of our approach is a quasi-hereditary algebra $D_n$ 
which is the Auslander algebra of 
$S_n=k[T]/T^n\rtimes C_2$.
%which turns out to be a skew group algebra 
%$R_n\left<g\right>$ (where $R_n=k[T]/T^n$ and $g$ is the cyclic generator of 
%$C_2$). 
The algebra $S_n$ and its Auslander algebra have
very similar properties as $R_n$ and the Auslander algebra
of $R_n$ (cf. Appendix, here we also show that $S_n$ is a skew group ring over
$R_n$ which is an essential tool for us.)
In Section~\ref{Dnprops} we explicitly describe the standard, costandard, 
projective and tilting modules. 

In Section~\ref{s:F(D)} we study %the class of 
$\Delta$-filtered modules for the 
Auslander algebra of $S_n$. We continue this in Section~\ref{s:D(I)} 
where we define a certain special class of $\Delta$-modules. This class will 
be essential in the construction of representatives 
of $P$-orbits. We denote these modules by $\Delta(I)$ 
where $I$ is a subset if $\{1^{\pm},\dots,n^{\pm}\}$. 
They can be seen as weighted versions of the modules 
%are the analogous of the modules 
introduced in Section 2 of~\cite{bhrr}. 

In order to construct the Richardson orbit using $D_n$-modules we need 
to understand the extension groups between the $\Delta(I)$ introduced 
in Section~\ref{s:D(I)}. 
In general, this is a very hard problem but 
we are able to determine them under certain conditions which are always 
satisfied in the setup we use. 

The Auslander algebra of $S_n$, (denoted $D_n$) is isomorphic to a skew group algebra of
the Auslander algebra of $R_n$ (denoted $A_n$). % with a cyclic group of order 2.
Every $D_n$-module is relative $A_n$-projective, and inducing and restricting 
preserves modules with $\Delta$-filtrations, as is explained in Section~\ref{indres}. 
In Proposition~\ref{prop:ext} we give an explicit result relating extension groups 
for $A_n$ to extension groups for $D_n$. 
%
%Using this, we study the relations between the 
%extension groups for $A_n$-modules and the ones for $D_n$ 
Then the extensions between different modules of the form $\Delta(I)$ 
are discussed in Section~\ref{s:exts}. In particular, we present a combinatorial 
way to compute the dimension of the groups of homomorphisms 
for the algebras $A_n$ and $D_n$ using so-called initial segments of 
the subsets of $\{1,\dots,n\}$ in Proposition 8.4 and Lemma 8.6. 

Let us emphasise that in one sense, 
there are far more  $\Delta$-filtered modules with no 
self-extensions for $D_n$ than there are for algebras $A_n$. 
In Section 9 we construct such modules for $D_n$ which do not exist for $A_n$, and 
we call these
``type II modules'' (and our results produce  examples of these). 
They arise as the indecomposable extensions between two modules $\Delta(I)$ and 
$\Delta(J)$ where $J$ is dependent on $I$.
As a preparation for this, we calculate the dimensions of the $\Ext$
groups of
such pairs $\Delta(I)$, $\Delta(J)$ and show that they can grow arbitrarily 
large, cf. Lemmata \ref{lem:extAIJ} and \ref{lem:extBIJ}.
From these extensions, we obtain a module $E(I,J)$ without
self-extensions in
Proposition \ref{prop:E}. 
$E(I,J)$ is called a type I module if it is of the form $\Delta(I_1)\oplus \Delta(I_2)$ for 
some subsets $I_i$. Otherwise, $E(I,J)$ is said to be of type II. 
%In Section~\ref{s:gaps} we deal with 
%extensions between $\Delta(I)$ and $\Delta(J)$ where $J$ is given by an 
%involution on $\{1,2,\dots, n\}$. The methods introduced there allow to 
%find modules with arbitrary large extension groups, cf. Lemmata 9.1 and 9.2. 
%From these extensions we can construct a module $E(I,J)$ without self-extensions, 
%cf. Proposition 9.6. 
%It turns out that in some cases, $E(I,J)$ is decomposable and thus is a sum of 
%two modules of the form $\Delta(I_i)$ (for some subsets $I_1$, $I_2$). In this 
%case, we say that $E(I,J)$ is a {\em type I}-module. 
%If $E(I,J)$ is indecomposable, we say that $E(I,J)$ is a {\em type II}-module. 
We study these type II modules in Section~\ref{s:class-exts}. 
In particular, we explain that type II modules occur roughly in half of the cases of 
modules obtained as extensions between such $\Delta(I)$, $\Delta(J)$. 
The characterisation of the other cases, namely of the decomposable ones 
is presented in Theorem \ref{thm:Edecomp}.

Finally, in Section~\ref{s:construction} we explain how to construct 
a module without self-extensions starting from the dimension vector 
${\bf d}$ for a parabolic subgroup of SO$_N$. 
The resulting module, $M({\bf d})$, has no self-extension and its 
restriction to $A_n$ has $\Delta$-support ${\bf d}$ (Proposition 11.6). 
The construction uses modules 
of the form $\Delta(I)$ and also the  type II modules defined before. 
We observe that the type II modules can be viewed as module theoretic 
analogues of the 
branched line diagrams appearing in the construction of Richardson elements
in~\cite{ba} and of the diagrams $D(d)$ with branched arrows 
in Section 2 of~\cite{bg}, both for the orthogonal Lie algebras: 
In the same way as these branched diagrams are needed to obtain the 
dense orbit (i.e. the Richardson orbit), the type II modules are essential 
in the construction of a module without self-extension.

%%%%%%%%%%%%%%%%%%%%%%%%%%%%%
%
\section{Motivation}\label{s:motivation}
%
%%%%%%%%%%%%%%%%%%%%%%%%%%%%%

Our ultimate goal is to find a bijection between $P$-orbits for 
a parabolic subgroup of an orthogonal group and 
isomorphism classes of certain $\Delta$-filtered modules for 
an algebra $D_n$. 
What we present in this article, is the first step towards this. Namely, 
we produce a module theoretic analogue of the 
Richardson orbit for $P$: a $D_n$-module without self-extension 
whose $\Delta$-dimension vector (after restriction to $A_n$) 
is equal to the composition ${\bf d}$ which determines 
the parabolic subgroup $P$.

Let us start by explaining the Lie algebra side of the problem. 
Let $P\subset G$ be a parabolic subgroup of a 
reductive algebraic group $G$ over 
an algebraically closed field $k$. Let $\mathfrak{p}\subset\mathfrak{g}$ be the 
corresponding Lie algebras and let 
$\mathfrak{p}=\mathfrak{l}\oplus\mathfrak{n}$ be a Levi decomposition of 
$\mathfrak{p}$, i.e. $\mathfrak{l}$ is a Levi factor of $\mathfrak{p}$ 
and $\mathfrak{n}$ is the corresponding nilradical. 
It is a very ambitious goal to understand the $P$-orbit structure 
in $\mathfrak{n}$. A first step towards this is a fundamental theorem 
of Richardson, cf.~\cite{ri}. It states that 
$P$ has an open dense orbit in $\mathfrak{n}$, the so-called 
{\em Richardson orbit} of $P$. Its elements are called 
{\em Richardson elements} for $P$. 
Observe that the existence of a dense orbit does not imply that there 
are only finitely many, 
in general, there are infinitely many $P$-orbits in the nilradical
$\mathfrak{n}$. 

For classical groups (in zero or good characteristic) 
there exists a classification 
of parabolic subgroups with a finite number of orbits due to Hille and 
R\"ohrle, cf.~\cite{hr}. 
%Roughly speaking these 
%are the parabolic subgroups 
%with at most $5$ blocks in the Levi factor. 

For $G=\SL_N$, the special linear group, we can actually say more. 
In order to explain this, we need some notation. For all classical 
Lie groups we will choose the Borel subgroup $B$ to be 
the upper triangular matrices in $G$ and the maximal torus 
to be the diagonal matrices in $G$. Let $\mathfrak{b}$ and 
$\mathfrak{h}$ be the corresponding Lie algebras. 
Then $P$ is called {\em standard}, if $P\supset B$. Similarly, 
we call $\mathfrak{p}$ and $\mathfrak{l}$ {\em standard}, 
if $\mathfrak{p}\supset\mathfrak{b}$ and $\mathfrak{l}\supset\mathfrak{h}$ 
respectively. 
After suitable conjugation, we can assume that $P$, $\mathfrak{p}$ 
and $\mathfrak{l}$ are standard. 
Then $\mathfrak{l}$ consists of the matrices whose non-zero entries only 
lie in a sequence of square blocks on the diagonal. So 
$\mathfrak{l}=\mathfrak{l}({\bf d})$ where ${\bf d}=(d_1,\dots,d_n)$ is 
a composition of $N$, i.e. $\sum d_i=N$, describing the sizes of 
these square blocks. And $\mathfrak{p}=\mathfrak{p}({\bf d})$ 
consists of the matrices with zeroes below the sequence 
of square matrices of size $d_1,\dots,d_n$ on the diagonal. 
We call ${\bf d}$ a \emph{dimension vector}. 

From now on we 
will always assume that $P$, $\mathfrak{p}$ and $\mathfrak{l}$ are 
standard. 

We are ready to formulate the result of Hille and R\"ohrle, cf.~\cite{hr}: 
Let $P=P({\bf d})\subset \SL_N$. 
Then there is a bijection:
\[
\{P\mbox{-orbits in } \mathfrak{n}\}\ \stackrel{1:1}{\longleftrightarrow}\ 
\{M\in \mathcal F(\Delta)\mid \underline{\dim}_{\Delta}M={\bf d}\}/\sim
\]
where $\mathcal F(\Delta)$ are the $\Delta$-filtered modules for the 
algebra A$_n$ described in the Appendix. % ~\ref{s:A_n}. 
Furthermore, the Richardson orbit is mapped to the unique 
$M$ with $\Delta$-dimension vector ${\bf d}$ which has no self-extensions. 
In other words, for $P=P({\bf d})$ in $\SL_N$ there is a description 
of $P$-orbits in $\mathfrak{n}$ 
via $\Delta$-filtered modules (for $A_n$) of dimension vector ${\bf d}$. 
\\
So far, it has remained an open problem to find an analogous result 
for the other classical types. In this paper, we show that the algebra $D_n$ 
as defined in Section~\ref{Dnprops} is the right candidate to describe $P$-orbits 
in $\mathfrak{n}$ for %$G=\SO_{2N}$, 
$G=\SO_{N}$, the special orthogonal group. 
Thus we are providing the first part of an analogue of the correspondence above 
for classical groups: 
Using the knowledge of the algebra $D_n$ (see Appendix) 
and our results about the extensions 
between $\Delta$-filtered modules for $D_n$ (cf. Section~\ref{s:gaps} below) 
we will construct certain 
$\Delta$-filtered modules without self-extensions 
of $\Delta$-dimension vector ${\bf d}$. 
There is an interesting new phenomenon appearing in this case: 
the construction of the modules of a given dimension vector leads
to a new class of $\Delta$-filtered modules which does 
not exist for $\SL_N$. In that aspect, the situation clearly 
differs from the case of $\SL_N$. 

To be more precise: to any given dimension vector ${\bf d}$ 
we associate a $\Delta$-filtered $D_n$-module $M({\bf d})$ which 
has no self-extensions and thus this is a module theoretic counterpart 
of the dense orbit. 
%we will model the Richardson orbit for %$P\subset \SO_{2N}$ 
%$P\subset \SO_{N}$ using 
%$\Delta$-filtered modules for the algebra $D_n$. 

The construction is given Section~\ref{s:construction} below, once 
we have all the material needed.

%%%%%%%%%%%%%%%%%%%%%%%%%%%%%%%
%
\section{The quasi-hereditary algebra $D_n$}\label{Dnprops}
%
%%%%%%%%%%%%%%%%%%%%%%%%%%%%%%%

Let $D_n$ be the Auslander algebra of $S_n$. 
%i.e. 
%$D_n:=\End(\oplus_i M(i^+)\bigoplus M(i^-))$ 
%where $M(i^+)$, $M(i^-)$ ($1\le i\le n$) are a complete 
%set of isoclasses of indecomposable $S_n$-modules. 
It is given by the quiver $\Gamma_n$ with 
vertices $i^{\pm}$, arrows $\alpha_{i^{\pm}}$ 
and $\beta_{i^{\pm}}$ and 
by the relations in Definition~\ref{defn:Gamma_n}
(as defined in the appendix). 

Here we describe the standard, costandard, projective and tilting 
modules for $D_n$. 
We use $L(i^{\pm})$ to denote the simple $D_n$-module 
corresponding 
to the indecomposable module $M(i^{\pm})$. 
From the definition, it is straightforward to write down 
the projective 
$D_n$-modules $P(i^{\pm})= e_{i^{\pm}}D_n$ 
(where $e_{i^{\pm}}$ 
is the primitive idempotent at $i^{\pm}$). 
%For $n=4,5$ the 
%projective module 
%at $1^+$ is illustrated in Figure~\ref{fig:n4n5}. 
The indecomposable projective modules embed into each other as
follows:
\[
P(1^{\pm})\supset P(2^{\mp})\supset\dots\supset
P((2k)^{\mp}) \supset P((2k+1)^{\pm})\supset\dots
\leqno{(*)}%\begin{array}{l}
%P(1^+)\supset P(2^-)\supset\dots\supset P(n^{\pm}) \\
%P(1^-)\supset P(2^+)\supset\dots\supset P(n^{\pm})
%\end{array}
\]
The projective module $P(1^{\pm})$ is the injective 
hull of $L(1^{\pm})$ if $n$ is odd and of $L(1^{\mp})$ 
if $n$ is even. 

Using the indecomposable projective modules, 
we can define the standard modules 
$\Delta(i^{\pm})$ as their successive quotients, we set 
\[
\Delta(i^+)=P(i^+)/P((i+1)^-)\ \ \ \ \Delta(i^-)=P(i^-)/P((i+1)^+)
\]
So $\Delta(i^{\pm})$ has socle $1^{\pm}$. 
In particular, the $P(i^{\pm})$ are filtered by 
standard modules, and the quotients can be read off from 
(*).

%Injectives and costandard

For $1\leq i\leq n$, the costandard $D_n$-module $\nabla(i^+)$
is the serial module of length $i$ with socle $L(i^+)$ 
and top $L(1^*)$ where
$*=+$ for $i$ odd and $*=-$ when $i$ is even, the composition factors
are labelled by $i^+, (i-1)^-, (i-2)^+, \ldots\ $.
The costandard module $\nabla(i^-)$ is described similarly.
The costandard module
$\nabla(i^{\pm})$ has socle $i^{\pm}$.
The top of $\nabla(i^{\pm})$ is $1^{\pm}$ if $i$ is odd and is 
$1^{\mp}$ otherwise.

%Let  $\mathcal{F}(\nabla)$ be  all $\nabla$-filtered modules, 
%these are just the 
%modules with top generated by $L(1^+)$ and $L(1^-)$. 
%
%There is the notion of 
%$\nabla$ dimension vector and $\nabla$ support for any module 
%in $\mathcal{F}(\nabla)$,
%in terms of the filtration by costandard modules. 

\begin{prop}
The algebra $D_n$ is quasi-hereditary with weight set

$\{1^+,1^-,2^+, 2^-,\dots,n^+, n^-\}$ and order
\begin{eqnarray*}
i^+<j^{\pm} \Longleftrightarrow i<j, \\
i^-<j^{\pm} \Longleftrightarrow i<j.
\end{eqnarray*}
%We denote the simple module at $i^{\pm}$ by $L(i^{\pm})$, so it
%corresponds to the indecomposable $M(i^{\pm})$.
%The standard and costandard modules are denoted by $\Delta(i^{\pm})$
%and by $\nabla(i^{\pm})$, respectively.
\end{prop}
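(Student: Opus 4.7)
The plan is to verify directly the two Cline--Parshall--Scott axioms for $D_n$ with respect to the proposed ordering, relying entirely on the explicit descriptions of $P(i^{\pm})$, $\Delta(i^{\pm})$, and $\nabla(i^{\pm})$ already recorded in the excerpt. Observe first that the proposed order makes $i^+$ and $i^-$ incomparable, while $i^{\epsilon}<j^{\eta}$ as soon as $i<j$ (for any signs $\epsilon,\eta\in\{+,-\}$); in particular, whether $\lambda<\mu$ holds depends only on the first coordinates of $\lambda$ and $\mu$.

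The first axiom requires, for each weight $\lambda=i^{\pm}$, that the standard $\Delta(\lambda)$ has simple top $L(\lambda)$ with multiplicity one and that every other composition factor lies strictly below $\lambda$. Since $\Delta(i^{\pm})=P(i^{\pm})/P((i+1)^{\mp})$, its top is inherited from $P(i^{\pm})$ and equals $L(i^{\pm})$. The composition factors of $\Delta(i^{\pm})$ can be read off either from the quotient description (subtracting composition factors of $P((i+1)^{\mp})$ from those of $P(i^{\pm})$ via $(*)$) or by duality with $\nabla(i^{\pm})$, whose factors are given explicitly as $i^{\pm},(i-1)^{\mp},(i-2)^{\pm},\ldots,1^{\ast}$, each appearing exactly once. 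Every factor other than the top has first coordinate strictly less than $i$ and therefore lies strictly below $\lambda$.

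The second axiom requires, for each projective $P(i^{\pm})$, a filtration by standards indexed by weights $\geq i^{\pm}$ with $\Delta(i^{\pm})$ occurring exactly once, at the top. This is furnished essentially verbatim by the chain $(*)$: the filtration
\[
P(i^{\pm})\supset P((i+1)^{\mp})\supset P((i+2)^{\pm})\supset\cdots\supset P(n^{\ast})\supset 0
\]
has consecutive quotients $\Delta(i^{\pm}),\Delta((i+1)^{\mp}),\ldots,\Delta(n^{\ast})$ by the very definition of the standard modules, and the first coordinates of the indices appearing are $i,i+1,\ldots,n$, so every section below the top lies strictly above $i^{\pm}$ in the order.

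There is no real obstacle: the argument amounts to assembling the structural data already laid out in the preceding paragraphs. The only bookkeeping is the alternation of signs along $(*)$, which is irrelevant for the order-theoretic conclusion because the proposed order depends only on the first coordinate.
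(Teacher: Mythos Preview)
Your proof is correct and is exactly the kind of direct verification the paper intends: in the paper this proposition is stated without proof, since the explicit descriptions of $P(i^{\pm})$, $\Delta(i^{\pm})$, $\nabla(i^{\pm})$ and the chain $(*)$ already given make the check routine. One small imprecision worth noting: you say the composition factors of $\Delta(i^{\pm})$ can be obtained ``by duality with $\nabla(i^{\pm})$'', but in fact $\Delta(i^{+})$ has composition factors $L(i^{+}),L((i{-}1)^{+}),\ldots,L(1^{+})$ (all the same sign), whereas $\nabla(i^{+})$ has alternating signs; there is no simple-preserving duality here identifying their multisets of composition factors. This does not affect your argument, since the order depends only on the first coordinate and both modules have first coordinates $1,2,\ldots,i$, but the safer justification is the one you also give, namely the quotient description $\Delta(i^{\pm})=P(i^{\pm})/P((i{+}1)^{\mp})$ together with the paper's explicit statement that $\Delta(i^{\pm})$ is serial of length $i$ with all composition-factor signs equal.
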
 

%Pictorially, for $n=5$, the order of the weight set 
%can be viewed:
%$${\small
%\xymatrix@-5mm{
%5^+\ar@{-}[d]\ar@{-}[dr] & 5^-\ar@{-}[d] \\
%4^-\ar@{-}[d]\ar@{-}[ur]\ar@{-}[dr] & 4^+\ar@{-}[d] \\
%3^+\ar@{-}[d]\ar@{-}[dr]\ar@{-}[ur] & 3^-\ar@{-}[d] \\
%2^-\ar@{-}[d]\ar@{-}[ur]\ar@{-}[dr] & 2^+\ar@{-}[d] \\
%1^+\ar@{-}[ur] & 1^-
%}
%}
%$$

In general,
the standard $D_n$-module $\Delta(i^{\pm})$ is the serial module of
length $i$ with socle $L(1^{\pm})$, and the signs of 
the labels of its composition factors are either all $+$ or all $-$. 
The $\nabla(i^\pm)$ are also uniserial and 
have alternating signs on their composition
factors.

Now we define the tilting modules. 
Recall that a  module has a $\Delta$-filtration if it has a filtration
whose successive quotients are isomorphic to standard
modules, similarly one  defines a $\nabla$-filtration.
Recall also that for each $i^{\pm}$ there is a unique  indecomposable 
 module, which we denote by $T(i^{\pm})$, which
has both a $\Delta$-filtration and a $\nabla$-filtration, with one
composition factor of the form $L(i^{\pm})$ and all other composition
factors of the form $L(j^{\pm})$ with $j<i$ (both $L(j^+)$ and
$L(j^-)$ may appear). We refer to direct sums of $T(i^{\pm})$ as tilting modules.
%The indecomposable module $T(i^-)$ is defined in
%the same way. 
Then  for $n$ odd we have
$T(n^{\pm})=P(1^{\pm})$, and for $n$ even $T(n^{\pm}) = P(1^{\mp})$, 
and these two are both projective and injective. 
%The $T(i^\pm)$ for $i<n$ have the same structure as the projectives
%for smaller $n$. 

\begin{remark}\label{rem:T(i)-sequ}
Note that $T((i-1)^+)$ is a quotient of $P(1^{\eps})$ by 
$P(i^-)$, where $\eps=+$ if 
$i$ is even and $\eps=-$ if $i$ is odd. The same is 
true for $T((i-1)^-)$ with signs exchanged. So we have 
a short exact sequence
\[
0\to P(i^{\mp})\to P(1^{\eps})\to T((i-1)^{\pm})\to 0
\]
(with appropriate sign $\eps$). 
\end{remark}

We will also use $Q(1^{\pm})$ to denote the injective hull of $L(1^{\pm})$. 
Note that $Q(1^{\pm})\cong P(1^{\pm})$ if $n$ is odd and 
$Q(1^{\pm})\cong P(1^{\mp})$ if $n$ is even.  

%%%%%%%%%%%%%%%%%%%%%%%%%%%%%%%%%%%%%%%%%
%
\section{$\Delta$-filtered modules for $D_n$}\label{s:F(D)}
%
%%%%%%%%%%%%%%%%%%%%%%%%%%%%%%%%%%%%%%%%%

Let ${\mathcal F}(\Delta)$ be the class of all $\Delta$-filtered
$D_n$-modules. 
In this section, we describe the 
properties of ${\mathcal F}(\Delta)$ 
extending the results of ~\cite{bhrr} to the algebra $D_n$. 

Recently, R. Tan has studied the category of $\Delta$-filtered modules 
for the Auslander algebra $E$ of a self-injective Nakayama algebra, 
in particular the submodules of the projective $E$-modules. 
In Section 3 of~\cite{tan}, 
she obtains similar results to those we explain here. 
%We include the proofs for our statements since Tan's set-up is 
%slightly different, and because we wish to provide a 
%self-contained exposition. 

%
\begin{lemma}\label{lm:submodules}
${\mathcal F}(\Delta)$ is closed under taking submodules. 
\end{lemma}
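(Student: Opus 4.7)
The plan is a short induction on the length of a $\Delta$-filtration of $M$. The ingredient that makes everything work is the explicit description of the standard modules in Section~\ref{Dnprops}: each $\Delta(i^{\pm})$ is uniserial with composition factors $L(1^{\pm}), L(2^{\pm}), \ldots, L(i^{\pm})$, all carrying the same sign. Consequently the unique submodule of $\Delta(i^{\pm})$ of length $j$ has socle $L(1^{\pm})$ and the same sign pattern, so it is isomorphic to $\Delta(j^{\pm})$. In other words, every submodule of a standard module is itself a standard module, which handles the base of the induction outright.

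For the inductive step, I would fix a $\Delta$-filtration $0 = M_0 \subset M_1 \subset \cdots \subset M_k = M$ with $M_j/M_{j-1} \cong \Delta(i_j^{\eps_j})$, and let $N \subseteq M$ be a submodule. Set $N' := N \cap M_{k-1}$. Since $N'$ is a submodule of $M_{k-1}$, the induction hypothesis (on filtration length) gives that $N'$ lies in $\mathcal{F}(\Delta)$. The natural inclusion followed by the projection $N \hookrightarrow M \twoheadrightarrow M/M_{k-1} \cong \Delta(i_k^{\eps_k})$ has kernel exactly $N'$, so $N/N'$ embeds into $\Delta(i_k^{\eps_k})$. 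By the observation of the first paragraph, $N/N'$ is either zero or isomorphic to $\Delta(j^{\eps_k})$ for some $j \leq i_k$. Prepending any $\Delta$-filtration of $N'$ with this single additional layer produces a $\Delta$-filtration of $N$.

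There is no genuine obstacle: the argument is entirely driven by the uniseriality and sign-consistency of the $\Delta(i^{\pm})$, so the only thing requiring care is to verify that the embedding $N/N' \hookrightarrow \Delta(i_k^{\eps_k})$ really lands in a submodule of the claimed form, and this is immediate once the composition series of $\Delta(i_k^{\eps_k})$ is written down. This mirrors the parallel fact for $A_n$ in~\cite{bhrr}; the addition of signs does not interfere, because each standard module lives on only one side.
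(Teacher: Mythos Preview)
Your proposal is correct and follows precisely the line indicated in the paper's own one-sentence proof: observe that submodules of standard modules are again standard (using the uniseriality and uniform sign of $\Delta(i^{\pm})$), then induct on the length of a $\Delta$-filtration. You have simply written out in full the details that the paper leaves implicit.
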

\begin{proof} %(Cf. Lemma 3.4 (ii) of~\cite{tan}). 
Since submodules of standard modules are also standard modules, this
follows
easily by 
induction over filtration length. 
%
%Induction over filtration length: 
%
%\noindent
%(i)
%Let $M\in{\mathcal F}(\Delta)$ have filtration length $1$, 
%i.e. $M=\Delta(j^{\eps})$ for some $1\le j\le n$. 
%Now if $N\neq 0$ is a submodule of $M$, $N$ is a submodule 
%of a standard module. 
%Since submodules of standard modules 
%are again standard modules with a smaller 
%weight we get 
%$N=\Delta(k^{\eps})$ for some 
%$k$ with $k<j$. So in 
%particular, $N$ is $\Delta$-filtered. \\
%
%\noindent
%(ii) 
%Let $M\in{\mathcal F}(\Delta)$ have filtration length $r\ge 2$, 
%i.e. 
%\[
%M=\begin{array}{ll}
%M_1 \\
%\Delta(j^{\eps})
%\end{array}
%\]
%where the quotient $M_1:=M/\Delta(j^{\eps})$ is 
%$\Delta$-filtered, $1\le j\le n$. 
%Now if $N\neq 0$ is a submodule of $M$, we can write 
%%
%\[
%N=\begin{array}{ll}
%N\cap M_1 \\
%N\cap\Delta(j^{\eps})
%\end{array}
%\]
%%
%Since the intersection 
%$N\cap \Delta(j^{\eps})$ is a submodule of $\Delta(j^{\eps})$, 
%we have $N\cap\Delta(j^{\eps})=\Delta(k^{\eps})$ for some 
%$k$ with $k<j$ as before. 
%In particular, $N\cap\Delta(j^{\eps})$ is $\Delta$-filtered. 
%
%Now consider the intersection $N\cap M_1$. If it is equal to zero
%we are done. 
%If $N\cap M_1\neq 0$, then it is a submodule of the quotient $M_1$ 
%whose filtration length is $r-1$. So by induction hypothesis, 
%$N\cap M_1$ is also $\Delta$-filtered, 
%hence $N\in{\mathcal F}(\Delta)$.
\end{proof}

We say that the socle of a $D_n$-module $M$ {\it is 
generated by $L(1^{\pm})$} if the only simples 
appearing in the socle are $L(1^+)$ and/or $L(1^-)$. 
%
%
%\[
%\bigoplus_{j\in J_1}L(1^+)\ \oplus\ \bigoplus_{j\in J_2}L(1^-)
%=L(1^+)^{\oplus m_1}\oplus L(1^-)^{\oplus m_2}\] 
%with 
%$|J_i|=m_i\ge 0$. 
Similarly, we say that the top of $M$ {\em is generated 
by $L(1^{\pm})$} if the only simples
appearing in the top are $L(1^+)$ and/or $L(1^-)$. 

\begin{lemma}\label{lm:socle}
%(i) 
${\mathcal F}(\Delta)$ is the set of all modules 
with socle generated by $L(1^{\pm})$. 
%
%(ii) ${\mathcal F}(\nabla)$ is the set of modules with top 
%generated by $L(1^{\pm})$. 
\end{lemma}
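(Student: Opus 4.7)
The plan is to prove the two inclusions separately, with Lemma~\ref{lm:submodules} doing the work in each direction. The crucial structural input is that among the standard modules only $\Delta(1^+)=L(1^+)$ and $\Delta(1^-)=L(1^-)$ are simple, because $\Delta(i^\pm)$ is a serial module of length $i$.

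For the forward direction, suppose $M\in\mathcal{F}(\Delta)$ and let $S$ be any simple submodule of $M$. By Lemma~\ref{lm:submodules}, $S$ itself lies in $\mathcal{F}(\Delta)$; being simple, its $\Delta$-filtration must consist of a single standard of length $1$, forcing $S\cong L(1^+)$ or $S\cong L(1^-)$. Since $\soc(M)$ is the sum of its simple submodules, it is generated by $L(1^{\pm})$.

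For the converse, assume $\soc(M)$ is generated by $L(1^{\pm})$ and let $Q$ denote an injective hull of $M$. Since $\soc(Q)=\soc(M)$ involves only the simples $L(1^+)$ and $L(1^-)$, $Q$ decomposes as a direct sum of copies of $Q(1^+)$ and $Q(1^-)$. By the last paragraph of Section~\ref{Dnprops} we have $Q(1^\pm)\cong P(1^\pm)$ when $n$ is odd and $Q(1^\pm)\cong P(1^\mp)$ when $n$ is even, so in either case $Q$ is a direct sum of the indecomposable projectives $P(1^{\pm})$. Each such projective is $\Delta$-filtered via the chain $(*)$, hence $Q\in\mathcal{F}(\Delta)$, and applying Lemma~\ref{lm:submodules} to the inclusion $M\hookrightarrow Q$ yields $M\in\mathcal{F}(\Delta)$.

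I do not anticipate a serious obstacle here; both directions reduce to Lemma~\ref{lm:submodules}, and the only extra ingredients are the length-$1$ analysis of the standards and the identification of $Q(1^\pm)$ with an indecomposable projective, both of which have already been recorded in Section~\ref{Dnprops}.
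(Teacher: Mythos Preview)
Your proof is correct and follows essentially the same route as the paper. The converse direction is identical: embed $M$ into a sum of injective envelopes of $L(1^{\pm})$, observe that these envelopes are the projective-injective tilting modules $P(1^{\pm})$ (equivalently $T(n^{\pm})$), hence $\Delta$-filtered, and apply Lemma~\ref{lm:submodules}. For the forward direction the paper simply remarks that every standard module has socle $L(1^{\pm})$, whereas you instead apply Lemma~\ref{lm:submodules} to an arbitrary simple submodule; both arguments are short and valid, and yours is perhaps a shade cleaner.
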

\begin{proof}
This is similar to Lemma 7.1 of~\cite{dr}. 

%We prove the first statement, the second follows similarly. 
It is clear that the socle of any module in ${\mathcal F}(\Delta)$ 
is of that form because all standard modules %$\Delta$'s 
have $L(1^+)$ or $L(1^-)$ as 
socle. Now we show that any $D_n$-module with socle 
generated by $L(1^{\pm})$ is in ${\mathcal F}(\Delta)$

\noindent
Assume that 
\[\soc(M)=\bigoplus_{j\in J_1} L(1^+)
\oplus\bigoplus_{j\in J_2} L(1^-).
\] 
Then we have an embedding 
\[
M\hookrightarrow 
\bigoplus_{j\in J_1}I(1^+)\oplus\bigoplus_{j\in J_2} I(1^-).
\] 
Now $I(1^+)$ and $I(1^-)$ are tilting modules, 
so in particular, they are $\Delta$-filtered and 
hence 
$M$ is a submodule of a $\Delta$-filtered module. 
Therefore, $M\in {\mathcal F}(\Delta)$ by Lemma~\ref{lm:submodules}. 
\end{proof}

Similarly, the class of $\nabla$-filtered modules 
${\mathcal F}(\nabla)$ is the set of modules with top 
generated by $L(1^{\pm})$. 

\begin{lemma}\label{lem:pdone}
The modules in $\mathcal F(\Delta)$ are the 
$D_n$-modules with projective dimension $\le 1$.
\end{lemma}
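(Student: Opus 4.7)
The plan is to prove the two inclusions of $\mathcal{F}(\Delta) = \{M : \pd M \le 1\}$ separately.

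For $\mathcal{F}(\Delta) \subseteq \{M : \pd M \le 1\}$, the definitions $\Delta(i^\pm) = P(i^\pm)/P((i+1)^\mp)$ already produce the projective resolution of length one
\[
0 \to P((i+1)^\mp) \to P(i^\pm) \to \Delta(i^\pm) \to 0,
\]
so $\pd \Delta(i^\pm) \le 1$. An induction on the length of a $\Delta$-filtration, using the long exact $\Ext$-sequence of $0 \to M' \to M \to \Delta(j^\pm) \to 0$ to force $\Ext^2(M,-) = 0$ from the vanishing at the outer terms, gives $\pd M \le 1$ for every $M \in \mathcal{F}(\Delta)$.

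For the reverse inclusion, I will use Lemma~\ref{lm:socle} and reduce the problem to showing that $\pd M \le 1$ implies $\soc M$ is generated by $L(1^\pm)$. The key intermediate fact is $\pd L(i^\pm) = 2$ for $i \ge 2$ (while $\pd L(1^\pm) = 1$ since $\rad P(1^\pm) = P(2^\mp)$ from $(*)$). Indeed, for $i \ge 2$ the first syzygy $\rad P(i^\pm)$ has a top containing $L((i-1)^\pm)$ (from the uniserial standard $\Delta(i^\pm)$ of length $\ge 2$ sitting on top of $P(i^\pm)$) and, when $i < n$, also $L((i+1)^\mp)$ (from the top of the embedded $P((i+1)^\mp)$); for $i = n$, a composition-factor count rules out $\rad P(n^\pm)$ being projective, since its length is strictly smaller than that of any projective $D_n$-module with top $L((n-1)^\pm)$. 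Either way $\rad P(i^\pm)$ is not projective, so $\pd L(i^\pm) = 2$, and consequently $D_n$ has global dimension $2$.

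With this in place, suppose for contradiction that $\pd M \le 1$ and $L(i^\epsilon) \hookrightarrow M$ for some $i \ge 2$. Applying $\Hom(-, X)$ to the short exact sequence $0 \to L(i^\epsilon) \to M \to M/L(i^\epsilon) \to 0$ produces
\[
\cdots \to \Ext^2(M, X) \to \Ext^2(L(i^\epsilon), X) \to \Ext^3(M/L(i^\epsilon), X) \to \cdots.
\]
The left-hand term vanishes by the hypothesis on $M$ and the right-hand term by the global-dimension bound, so $\Ext^2(L(i^\epsilon), X) = 0$ for every $X$. This forces $\pd L(i^\epsilon) \le 1$, contradicting the computation above. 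Hence $\soc M$ is generated by $L(1^\pm)$, and Lemma~\ref{lm:socle} gives $M \in \mathcal{F}(\Delta)$. The main obstacle is the syzygy computation $\pd L(i^\pm) = 2$ for $i \ge 2$, which hinges on comparing the (generally non-uniserial) Loewy structure of $\rad P(i^\pm)$ with the Loewy structure of the projective $D_n$-modules to rule out projectivity; the surrounding homological manipulations are routine.
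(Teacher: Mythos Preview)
Your argument is correct. The paper itself defers entirely to \cite[Lemma~1]{bhrr} without giving details, so there is no explicit proof to compare against; your write-up is a valid way to fill in those details and matches the standard approach for Auslander algebras of this kind.

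One small remark on presentation: for $2\le i<n$, the claim that $L((i+1)^\mp)$ lies in the top of $\rad P(i^\pm)$ is not quite an immediate consequence of the inclusion $P((i+1)^\mp)\subset P(i^\pm)$ alone (an embedded projective could in principle sit inside $\rad^2$). It follows cleanly, however, from the quiver $\Gamma_n$: the arrows out of $i^\pm$ are exactly $\alpha_{i^\pm}\colon i^\pm\to(i+1)^\mp$ and $\beta_{i^\pm}\colon i^\pm\to(i-1)^\pm$, so $\rad P(i^\pm)/\rad^2 P(i^\pm)\cong L((i+1)^\mp)\oplus L((i-1)^\pm)$. This also makes the $i=n$ case transparent (only $\beta_{n^\pm}$ survives), after which your length comparison finishes it. The bound $\pd L(i^\pm)\le 2$ that you need for the global-dimension step is already implicit in what you have proved: $\rad P(i^\pm)$ is a submodule of a projective, hence lies in $\mathcal F(\Delta)$ by Lemma~\ref{lm:submodules}, and therefore has projective dimension $\le 1$ by your first inclusion.
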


\begin{proof} %(Cf.~\cite{tan}, proof of Lemma 3.4 and Proposition 3.7). 
%
%(i) 
This follows in a similar fashion to \cite[Lemma 1]{bhrr}.
\end{proof}
Next we observe that we can describe the 
submodules of $P(1^{\pm})$ similarly as in Lemma 2 
of~\cite{bhrr}. 
\begin{lemma}\label{lm:socle-submod} 
Let $M$ be a $D_n$-module. 
Then the following are equivalent: \\
(i) $M$ is a nonzero submodule of $P(1^{\pm})$. 

\noindent 
(ii)  
$
\soc(M)=\left\{
\begin{array}{ll}
L(1^{\pm}) & \mbox{ if $n$ is odd} \\
L(1^{\mp}) & \mbox{ if $n$ is even.}
\end{array}\right.
$
\end{lemma}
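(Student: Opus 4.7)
The plan is to reduce both implications to the observation, recalled at the end of Section~\ref{Dnprops}, that $P(1^{\pm})$ coincides with the injective hull $Q(1^{\pm})$ when $n$ is odd and with $Q(1^{\mp})$ when $n$ is even. In either case $P(1^{\pm})$ is an indecomposable injective $D_n$-module, so its socle is simple and equal to $L(1^{\pm})$ when $n$ is odd and to $L(1^{\mp})$ when $n$ is even, precisely the simple module prescribed in (ii).

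For the implication (i)$\Rightarrow$(ii) I would argue as follows. If $M$ is a nonzero submodule of $P(1^{\pm})$, then $\soc M$ is a nonzero submodule of $\soc P(1^{\pm})$, which by the above remark is simple. Hence $\soc M$ equals $\soc P(1^{\pm})$, giving the description in (ii). For the converse (ii)$\Rightarrow$(i), let $L$ denote the simple module listed in (ii), so that $P(1^{\pm})$ is the injective hull $E(L)$. Since $\soc M \cong L$, we have an inclusion $\iota\colon \soc M \hookrightarrow P(1^{\pm})$, and by injectivity of $P(1^{\pm})$ this extends along $\soc M \hookrightarrow M$ to a homomorphism $\varphi\colon M \to P(1^{\pm})$. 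It remains to see that $\varphi$ is injective, and this is the one point requiring a small argument: if $\ker \varphi$ were nonzero, it would contain a simple submodule of $M$, hence would meet $\soc M$ nontrivially, contradicting the injectivity of $\varphi|_{\soc M} = \iota$. Thus $\varphi$ embeds $M$ into $P(1^{\pm})$.

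The only potential obstacle is making sure the sign-bookkeeping between $P(1^{\pm})$ and its socle is done consistently in both parities of $n$, but once the identification $P(1^{\pm}) = Q(1^{\pm})$ or $Q(1^{\mp})$ is in place this is automatic. Note that this is an analogue of Lemma~2 of~\cite{bhrr}, the only new ingredient being the parity-dependent twist in the identification of $P(1^{\pm})$ with an injective hull, which is handled uniformly by the description $(*)$ of the nested projectives in Section~\ref{Dnprops}.
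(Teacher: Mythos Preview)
Your proof is correct and follows exactly the same idea as the paper's own argument, which simply records that $P(1^{\pm})$ is the injective envelope of $L(1^{\pm})$ for $n$ odd and of $L(1^{\mp})$ for $n$ even. You have merely spelled out in detail the standard consequences of this fact for both implications.
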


\begin{proof} %(Compare with Lemma 3.5 (i) of~\cite{tan}). 
Follows from the fact that for odd $n$, 
$P(1^{\pm})$ is the injective envelope of $L(1^{\pm})$ 
and for even $n$ it is the injective envelope of 
$L(1^{\mp})$. 
\end{proof}

\begin{lemma}\label{lm:indecs}
Any nonzero submodule of $P(1^{\pm})$ is  
indecomposable and belongs to ${\mathcal F}(\Delta)$.
\end{lemma}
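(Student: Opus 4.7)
The plan is to reduce to the preceding two lemmas. First, I would invoke Lemma~\ref{lm:socle-submod}: since $P(1^{\pm})$ is the injective hull of $L(1^{\pm})$ (for $n$ odd) or $L(1^{\mp})$ (for $n$ even), its socle is simple, so any nonzero submodule $M \subset P(1^{\pm})$ has socle contained in this simple module. Hence $\soc(M)$ is itself simple, equal to $L(1^{\pm})$ or $L(1^{\mp})$ depending on parity.

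From simplicity of the socle, indecomposability is immediate: if $M = M_1 \oplus M_2$ with $M_i \neq 0$, then each $M_i$ contributes a nonzero socle summand, contradicting the fact that $\soc(M)$ is simple. Thus $M$ is indecomposable.

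For the second assertion, the socle of $M$ is a single copy of $L(1^+)$ or $L(1^-)$, so in particular $\soc(M)$ is generated by $L(1^{\pm})$ in the sense of Section~\ref{s:F(D)}. By Lemma~\ref{lm:socle}, this characterises the modules in ${\mathcal F}(\Delta)$, so $M \in {\mathcal F}(\Delta)$.

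There is essentially no obstacle here: once Lemmas~\ref{lm:socle} and~\ref{lm:socle-submod} are in hand, the statement is a two-line combination of them, the only subtle point being the automatic reduction from ``socle generated by $L(1^{\pm})$'' to ``socle is a single simple summand'' which follows because $P(1^{\pm})$ has simple socle.
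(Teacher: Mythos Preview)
Your proof is correct and follows essentially the same line as the paper: both deduce indecomposability from the fact that $P(1^{\pm})$ is the injective envelope of a simple, hence has simple socle. The only cosmetic difference is that for membership in ${\mathcal F}(\Delta)$ the paper cites Lemma~\ref{lm:submodules} (closure under submodules, using that $P(1^{\pm})$ is itself $\Delta$-filtered) rather than the socle characterisation of Lemma~\ref{lm:socle}; either route is a one-line application.
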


\begin{proof} %(Cf. Lemma 3.2 of~\cite{tan}).
Let $M$ be a nonzero submodule of $P(1^{\pm})$. 
Since $P(1^{\pm})$ is equal to the injective 
envelope of $L(1^{\eps})$ (for $\eps=\pm$ if $n$ is 
odd and $\eps=\mp$ if $n$ is even), $M$ is indecomposable. 
By Lemma~\ref{lm:submodules}, any submodule of 
$P(1^{\pm})$ is in ${\mathcal F}(\Delta)$. 
\end{proof}

%%%%%%%%%%%%%%%%%%%%%%%%%%%%%%%%%
%
\section{Submodules and quotients of the projective modules}\label{s:D(I)}
%
%%%%%%%%%%%%%%%%%%%%%%%%%%%%%%%%

In this section, we are going to describe certain 
indecomposable $\Delta$-filtered modules, the $\Delta(I)$. 
They are submodules of $P(1^+)$ or $P(1^-)$  
and do not have self-extensions. 
These form the key components of our extensions $E(I,J)$. 

%%%%%%%%%%%%
%omitting the delta dimension vector
%%%%%%%%%%%%%%

%Let $M$ be a $\Delta$-filtered 
%$D_n$-module. 
%The multiplicity of each $\Delta(i^\epsilon)$ in 
%a $\Delta$-filtration is independent of the filtration chosen.
%We let $\dim_{\Delta}M_{i^\epsilon}$ be this multiplicity.
% 
%We define the \emph{$\Delta$-dimension vector of $M$}, $\dim_{\Delta}M$,
% to be the $2n$-tuple whose  
%entries are $$(\dim_{\Delta}M_{1^+},
%\dim_{\Delta}M_{1^-},\dim_{\Delta}M_{2^+},
%\dim_{\Delta}M_{2^-},  \ldots, \dim_{\Delta}M_{n^-})\,.$$ 
%
%We define the {\em $\Delta$-support of $M$} to be  the set of indices 
%$i^{\pm}$ such that
%$\dim_{\Delta}M_{i^{\pm}}\neq 0$. 
%(We may also just give the 
%$\dim_{\Delta}M_{i^\pm}$ ordered on the quiver picture).
%One can define $\nabla$-dimension vector and
%$\nabla$-support analogously, using filtrations by costandard
%modules.
%We describe the indecomposable modules we are interested 
%in using their $\Delta$-support, i.e. we use subsets 
%of $I\subset\{1,2,\dots,n\}$ and collections of signs 
%attached to them. 
%Let us start with the $\Delta(I^{\keps})$.

From now, we abbreviate
$\{1,2,\dots,n\}$ by $[n]$ and
$\{1^+,1^-,\dots,n^+,n^-\}$ by $[n]^{\pm}$.
Unless mentioned 
otherwise we will always assume that a subset 
$I=\{i_1,i_2\dots,i_k\}\subset[n]$ is decreasingly 
ordered, i.e. $i_1>i_2>\dots>i_k$.
We call a subset $I$ of $[n]^{\pm}$
\emph{signed}
if there is no $1\le i\le n$ with both
$i^+\in I$ \emph{and} $i^-\in I$, i.e. 
$I=\{i_1^{\epsilon_1},\dots,i_k^{\epsilon_k}\}$,
for some subset $\{i_1,\dots,i_k\}$ of $[n]$ and $k$, $1\le k\le n$
with $\epsilon_l\in \{+,-\}$ for $l=1,\dots,k$.
%We will write $I^{\keps}$ to denote a signed subset of
%$[n]^{\pm}$. So if $|I|=k$, the set $\keps$ is a $k$-tuple 
%of signs. 
Now let $I=\{i_1^{\epsilon_1},\dots,i_k^{\epsilon_k}\}$ be
a signed subset of $[n]^{\pm}$.
\begin{itemize}
\item
If for $j=1,\dots,k-1$ we have
$\epsilon_j\neq\epsilon_{j+1}$ we say that the signs
$\epsilon_1,\dots,\epsilon_k$ of $I$
are \emph{alternating} and we also call
$I$ an \emph{alternatingly signed subset}.
\item
If $\epsilon_j\neq\epsilon_{j+1}$ if and only if $i_{j+1}-i_j$ is
even, we say that the signs are \emph{step-alternating} and we also
call $I$ \emph{step alternatingly signed}. 
\end{itemize}
%
%Note that we also continue to use $\epsilon$ for an
%unknown sign. 
Recall that, for any sign $\epsilon$,
$\bar{\epsilon}$ is the sign opposite to $\epsilon$.
Let $s(a)$ be the sign of a number $a$, 
i.e. $s(1)=+$ and $s(-1)=-$. 
We
can now define the modules $\Delta(I)$:
\begin{definition}\label{defn:Delta}
Let $I=\{i_1^{\epsilon_1},i_2^{\epsilon_2},
\dots,i_k^{\epsilon_k}\}$ be a signed subset of
$[n]^{\pm}$ with
$i_1>i_2>\dots>i_k$.
\begin{itemize}
\item[(i)]
Assume that
the signs are alternating. Then we set
$\Delta(I)$ to be the submodule of
$$\left\{
\begin{array}{ll}
P(1^+) \mbox{ with $\Delta$-support $I$} & \mbox{if }
s((-1)^n)=\overline{\epsilon_k} \\
P(1^-) \mbox{ with $\Delta$-support $I$} & \mbox{if }
s((-1)^n)=\epsilon_k\, .
\end{array}
\right.$$ 
\item[(ii)]
Assume that the signs are step-alternating. Then we
set $\nabla(I)$ to be the factor module of
$$\left\{
\begin{array}{ll}
P(1^+) \mbox{ with $\nabla$-support $I$} & \mbox{if }
s((-1)^{i_k})=\overline{\epsilon_k} \\
P(1^-) \mbox{ with $\nabla$-support $I$} & \mbox{if }
s((-1)^{i_k})=\epsilon_k\, .
\end{array}
\right.$$
\end{itemize}
\end{definition}
It is clear that $\Delta(I)$ is unique (the existence can be
seen using the quiver and relations),
i.e. there can be no two different submodules of $P(1^+)$ 
with same $\Delta$-support. Is it also clear that a submodule 
of $T(n^{\pm})$ is uniserial in its $\Delta$-filtration.

\begin{remark}\label{rem:|submod|}
In other words, for any signed subset $I$ with
signs $\{\epsilon_1,\dots,\epsilon_k\}$ we have
the following:

\begin{enumerate}
\item[(i)]
If $I$ is alternatingly signed,
then $\Delta(I)$ is a submodule of
$P(1^{\epsilon_k})$ for odd $n$ and of
$P(1^{\overline{\epsilon_k}})$ if $n$ is even.
In all other cases, we do not define $\Delta(I)$.
There will be other $\Delta$-filtered modules with $\Delta$-support equal to
$I$ but these modules will not be submodules of a single projective
module.
 %as the signs alternate in the structure of $P(1^+)$ and
%$P(1^-)$.
%By the structure of $P(1^+)$, for $\Delta(I^{\keps})$
%to be a submodule of $P(1^+)$, the signs $\keps$ have
%to alternate. Furthermore, if $n$ is odd, the largest
%sign has to be positive and if $n$ is even, we need
%$\epsilon_k=-$, cf. pictures (a) and (b) in
%Figure~\ref{fig:n4n5}.
%(The statement for $P(1^-)$ is completely analogous.)
%
\item[(ii)]
We similarly only define $\nabla(I)$ 
if $I$ is step-alternatingly signed. 
\end{enumerate}
\end{remark}

From now on we will use the following convention: 
If we say that $I$ is signed and we are working with a module
$\Delta(I)$ then we most of the time tacitly assume that the set $I$ is alternatingly
signed. Similarly if we work with $\nabla(I)$ then the set is step
alternatingly signed. 

\medskip

Now we describe the relation between submodules of
$P(1^{\pm})$ and subsets of $[n]$ and between 
factor modules of $P(1^{\pm})$ and subsets of $[n]$.

\begin{lemma}
\begin{enumerate}
\item[(i)]
The map sending a submodule $M$ of $P(1^{+})$ 
(or $P(1^-)$) to its
$\Delta$-support induces a bijection between the submodules of
$P(1^+)$ (or $P(1^-)$)
and the subsets of $[n]$ (ignoring signs). 
\item[(ii)]
The map sending a quotient module $N$ of $P(1^+)$ (or $P(1^-)$)
to its $\nabla$-support induces a bijection between the factor 
modules of $P(1^+)$ (or $P(1^-)$) and the subsets 
of $[n]$ (ignoring signs).
\end{enumerate}
In particular, $P(1^+)$ and $P(1^-)$ each have precisely $2^n$
submodules and $2^n$ factor modules. 
\end{lemma}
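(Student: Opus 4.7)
The plan is to identify every submodule of $P(1^+)$ (and symmetrically of $P(1^-)$) with one of the modules $\Delta(I)$ from Definition~\ref{defn:Delta}, and then appeal to the parametrisation given there. Start with a nonzero submodule $M\subseteq P(1^+)$. By Lemma~\ref{lm:submodules} one has $M\in\mathcal F(\Delta)$, and by Lemma~\ref{lm:indecs} the module $M$ is indecomposable. The chain (*) exhibits $P(1^+)$ as a submodule of the tilting module $T(n^\pm)$, whose $\Delta$-filtration is uniserial by the remark following Definition~\ref{defn:Delta}; consequently the $\Delta$-filtration of $M$ is uniquely determined, and $M$ carries a well-defined signed $\Delta$-support.

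Next I would verify that this signed $\Delta$-support is in fact alternatingly signed, so that the underlying unsigned subset of $[n]$ is well defined. Since the uniserial $\Delta$-filtration of $T(n^\pm)$ inherited from (*) has strictly alternating signs, any collection of consecutive standard subfactors $\Delta(i_1^{\epsilon_1}),\dots,\Delta(i_k^{\epsilon_k})$ occurring inside $P(1^+)$ must satisfy $\epsilon_j\neq\epsilon_{j+1}$. Lemma~\ref{lm:socle-submod} then pins $\soc(M)$ to $L(1^\eps)$ with $\eps=+$ for $n$ odd and $\eps=-$ for $n$ even, which forces the sign $\epsilon_k$ at the smallest element of the support to be exactly the one singled out in Definition~\ref{defn:Delta}(i) for submodules of $P(1^+)$. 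Conversely, for any subset $I_0\subseteq[n]$, the unique alternatingly signed lift $I$ of $I_0$ with this prescribed $\epsilon_k$ produces, by Definition~\ref{defn:Delta}, a submodule $\Delta(I)\subseteq P(1^+)$ with $\Delta$-support $I$.

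Injectivity of the assignment $M\mapsto I$ follows from the uniqueness remark immediately after Definition~\ref{defn:Delta}: since $P(1^+)$ is the injective envelope of $L(1^\eps)$ and thus contains $L(1^\eps)$ with multiplicity one, any two submodules of $P(1^+)$ sharing the same $\Delta$-support must coincide. Counting unsigned subsets then gives exactly $2^n$ submodules of $P(1^+)$, and the same argument applied to $P(1^-)$ gives the remaining $2^n$. Part~(ii) is obtained by dualising: the standard duality swaps $\mathcal F(\Delta)$ with $\mathcal F(\nabla)$, submodules with quotients, and alternating with step-alternating sign patterns as in Definition~\ref{defn:Delta}(ii), and the dual of Lemma~\ref{lm:socle-submod} on tops (using that $P(1^\pm)$ has simple top $L(1^\pm)$) supplies the appropriate sign constraint on the largest element of the $\nabla$-support.

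The main obstacle I foresee is cleanly justifying the forced alternation of the signs, that is, ruling out a submodule of $P(1^+)$ whose $\Delta$-filtration contains two consecutive factors of the same sign. This should reduce without difficulty to the uniserial structure of $T(n^\pm)$ along (*), but it is the one place where the combinatorial definition of ``alternatingly signed'' has to be matched against the intrinsic module-theoretic information, and it is worth verifying carefully rather than by analogy with \cite{bhrr}.
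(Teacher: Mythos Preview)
Your proposal is correct and follows essentially the same route as the paper, which simply appeals to Remark~\ref{rem:|submod|} to identify the submodules of $P(1^+)$ with the $\Delta(I)$ for alternatingly signed $I$ having the prescribed $\epsilon_k$, and then observes that forgetting signs is a bijection onto subsets of $[n]$. You spell out more of the intermediate steps (the $\Delta$-uniseriality, the socle constraint from Lemma~\ref{lm:socle-submod}, the uniqueness remark after Definition~\ref{defn:Delta}) and rightly flag the alternation of signs as the one point needing care; this is exactly the content the paper packages into the phrase ``it is clear'' before Remark~\ref{rem:|submod|}, and your suggested justification via intersecting $M$ with the chain $(*)$ is the natural way to make it precise.
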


\begin{proof}
It is enough to consider (i). Let $M$ be a submodule of $P(1^+)$. 
By Remark~\ref{rem:|submod|}, the map induces a bijection between 
$P(1^+)$ and 
the (alternatingly) signed subsets
$\{i_1^{\epsilon_1},\dots,i_k^{\epsilon_k}\}$
of $[n]^{\pm}$ with
$\epsilon_k=\overline{s((-1)^n)}$. But this is in bijection to 
the subsets
$\{i_1,\dots,i_k\}\subset[n]$. 
\end{proof}

In what follows, we will need to go from a
subset of $[n]$ to a signed subset of $[n]^{\pm}$:
If we associate to
$I_0=\{i_1,\dots,i_k\}\subset$ $[n]$ a $k$-tuple
$\keps=\{\epsilon_1,\dots,\epsilon_k\}$ of signs,
we will call the resulting
$I= \{i_1{^\epsilon_1},\dots,i_k{^\epsilon_k}\}$
a \emph{signed version} of
$I_0$ and we say that $I_0$ is the 
\emph{unsigned version} of $I$.

\begin{lemma}\label{lm:signed}
Let $I_0$ be a non-empty subset of $[n]$.
Then there are unique signed versions
$I$ and $I'$ of $I_0$
such that
$\Delta(I)$ is a submodule of $P(1^+)$
and $\nabla(I')$ is a factor module
of $P(1^{+})$.
\end{lemma}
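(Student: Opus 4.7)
The plan is to reduce both statements to a purely combinatorial observation: once the sign $\epsilon_k$ of the smallest element $i_k$ of $I_0$ is pinned down, the remaining signs are forced by the alternation rule in~(i) and the step-alternation rule in~(ii), respectively.

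For part~(i), write $I_0=\{i_1>i_2>\cdots>i_k\}$. By Definition~\ref{defn:Delta}(i), a signed version $I$ of $I_0$ has $\Delta(I)$ as a submodule of $P(1^+)$ precisely when $s((-1)^n)=\overline{\epsilon_k}$; this forces $\epsilon_k=\overline{s((-1)^n)}$. Since $I$ must be alternatingly signed, the signs $\epsilon_{k-1},\epsilon_{k-2},\dots,\epsilon_1$ are then determined by successive flips. The resulting signed set $I$ is an alternatingly signed version of $I_0$ with the correct parity of $\epsilon_k$, so by the bijection of the previous lemma (part~(i)) there is indeed a submodule $\Delta(I)\subset P(1^+)$ with $\Delta$-support $I_0$. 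Uniqueness follows because any other sign choice for $\epsilon_k$ violates the condition of Definition~\ref{defn:Delta}(i) for membership in $P(1^+)$.

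Part~(ii) is strictly analogous. Definition~\ref{defn:Delta}(ii) says that $\nabla(I')$ is a factor module of $P(1^+)$ precisely when $s((-1)^{i_k})=\overline{\epsilon_k}$, which uniquely determines $\epsilon_k$ (this time using the parity of $i_k$ instead of $n$). Because $I'$ is required to be step-alternatingly signed, the rule $\epsilon_j\neq\epsilon_{j+1}\iff i_{j+1}-i_j$ even then propagates the signs from $\epsilon_k$ upwards, uniquely producing $\epsilon_{k-1},\dots,\epsilon_1$. Existence of $\nabla(I')$ as a factor of $P(1^+)$ with $\nabla$-support $I_0$ is guaranteed by part~(ii) of the previous lemma.

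I do not expect a real obstacle; the argument is a straightforward unpacking of Definition~\ref{defn:Delta} together with the bijections of the previous lemma. The only mild subtlety is to be careful in the $\nabla$-case that propagating signs from $\epsilon_k$ using the parity condition really yields a step-alternatingly signed set, but this is immediate because the defining condition of step-alternation is itself a pointwise rule on consecutive pairs, and we have used exactly that rule to produce the signs.
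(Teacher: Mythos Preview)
Your argument is correct. The paper itself omits the proof entirely, stating only ``We leave the (easy) proof to the reader,'' so there is nothing to compare against in detail; your unpacking of Definition~\ref{defn:Delta} together with the bijection of the preceding lemma is precisely the intended routine verification.
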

Note that the same statements hold for $P(1^-)$ with
``opposite'' signs.
We leave the (easy) proof to the reader.
%We prove this for $\Delta(I)$:

%We have seen that
%for $\Delta(I)$ to be a submodule of $P(1^{\pm})$, 
%$I$ has to be alternatingly signed. Furthermore, for
%$\Delta(I)\subset P(1^+)$,
%the sign of the largest entry is forced to be $+$ for odd
%$n$ and it has to be $-$ for even $n$.
%This uniquely defines the signs that must be given to the entries of
%$I_0$ to form $I$.

%Statement for $\nabla(I^{\keps'})$:
%Let $I_0=\{i_1,\dots,i_k\}$.
%Here, the situation is slightly more intricate.
%We know that for $\nabla(I')$ to be a factor
%module of $P(1^{\pm})$, the signs given to the entries in $I_0$
%have to be step alternating. If $P(1^{\pm})=P(1^+)$,
%the largest entry, $i_k$ has to have positive sign
%if $i_k$ is odd and negative sign if $i_k$ is even.
%This uniquely defines the signs of $I'$.
%\end{proof}

Let $I$ and $J$ be signed subsets of $[n]^{\pm}$.
By abuse of terminology
we say that $J$ is a \emph{complement} to $I$ if their unsigned versions
$I_0$ and $J_0$ are such that $J_0 = [n]\setminus I_0$.
Clearly, the complement to a signed subset is \emph{not} unique.  
But we have the following, which is easy to prove, and is analogous
to [page 298]\cite{bhrr}. 
\begin{lemma}\label{lm:s-e-s}
Let $I$ be a signed subset of
$[n]^{\pm}$.
Assume 
that $\Delta(I)$ is a submodule of $P(1^+)$.
Then there is a unique complement $I^c$ of 
$I$ such that there is a short exact sequence
$$
0\to\Delta(I)\to P(1^+)\to\nabla(I^c)\to 0.
$$
%$\nabla(I^{\keps'})$ a factor module of $P(1^+)$ and such that
%$P(1^{+})/\Delta(I^{\keps})\cong \nabla(I'^{\keps'})$.
\end{lemma}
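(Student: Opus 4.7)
My plan is to construct the desired short exact sequence by taking the natural quotient $N := P(1^+)/\Delta(I)$ and then identifying $N$ with $\nabla(I^c)$ for the unique $I^c$ supplied by Lemma~\ref{lm:signed}.

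First, I would place $N$ in $\mathcal{F}(\nabla)$. Since $P(1^+)$ has simple top $L(1^+)$, the top of $N$ is either zero or $L(1^+)$, and in particular is generated by $L(1^\pm)$. The dual of Lemma~\ref{lm:socle} then puts $N$ in $\mathcal{F}(\nabla)$. Being a factor module of $P(1^+)$, $N$ is determined up to isomorphism by its unsigned $\nabla$-support $J_0 \subset [n]$, by part~(ii) of the lemma preceding Lemma~\ref{lm:signed}. Lemma~\ref{lm:signed} then supplies a unique step-alternatingly signed version $I^c$ of $J_0$ such that $\nabla(I^c)$ is a factor of $P(1^+)$, and the bijection forces $N \cong \nabla(I^c)$.

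The remaining and main step is to prove $J_0 = [n] \setminus I_0$, so that $I^c$ is genuinely a complement of $I$. Here I would use that $P(1^+)$ is a tilting module (equal to $T(n^+)$ for $n$ odd and to $T(n^-)$ for $n$ even), hence admits a $\nabla$-filtration in which every $\nabla(j^?)$ appears exactly once, dually to the chain~(*) of Section~\ref{Dnprops}. The short exact sequence yields the additive identity
\[
[P(1^+):L(j^\eps)] = [\Delta(I):L(j^\eps)] + [N:L(j^\eps)]
\]
for all $j$ and $\eps$, and combining this with the explicit uniserial descriptions of $\Delta(i^\pm)$ (constant signs) and $\nabla(i^\pm)$ (alternating signs) from Section~\ref{Dnprops} pins down the $\nabla$-multiplicities of $N$, giving $J_0 = [n] \setminus I_0$.

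Uniqueness of $I^c$ as a complement of $I$ making such a sequence exist is then automatic: any signed $J$ with $J_0 = [n] \setminus I_0$ for which $0 \to \Delta(I) \to P(1^+) \to \nabla(J) \to 0$ is exact would force $\nabla(J) \cong N$, and the bijection of the first step recovers $J = I^c$. The only delicate bookkeeping is the matching of constant-sign $\Delta$-composition factors with alternating-sign $\nabla$-composition factors in the identity above; everything else follows directly from results already established in the section.
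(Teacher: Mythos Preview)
Your argument is correct. The paper itself does not prove this lemma; it only remarks that the result is easy and analogous to the $A_n$ version in \cite{bhrr}, so there is nothing substantive to compare against at the level of method.

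One simplification worth noting: the composition-factor bookkeeping you call delicate becomes trivial once you sum over signs. Setting $c_j(M):=[M:L(j^+)]+[M:L(j^-)]$, both $\Delta(i^*)$ and $\nabla(i^*)$ satisfy $c_j=1$ for $j\le i$ and $c_j=0$ otherwise, independent of the sign $*$. Your additive identity then reads
\[
|\{i\in J_0:i\ge j\}| \;=\; (n-j+1)-|\{i\in I_0:i\ge j\}| \;=\; |\{i\in[n]\setminus I_0:i\ge j\}|
\]
for every $j$, and this forces $J_0=[n]\setminus I_0$ immediately, with no need to track the individual signs on either side.
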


\section{Results relating $\Ext^{\bullet}_{A_n}$ to
  $\Ext^{\bullet}_{D_n}$}\label{indres}
%
%%%%%%%%%%%%%%%%%%%%%%%%%%%%%

We have seen that
$D_n$ is a skew group ring over $A_n$ which
allows us to relate the $\Delta$-filtered
modules of these two algebras.
In this section, we use induction and restriction to relate $\Ext_{A_n}$ 
and $\Ext_{D_n}$. 

Since $D_n$ is free as module over $A_n$,
the  adjoint functors given by the $A_n, D_n$ bimodule $D_n$, that is,
inducing and corestricting, have good properties:
They preserve projectives, so we have Shapiro's Lemma, 
$
\Ext^{\bullet}_{D_n} (X\otimes_{A_n}D_n, Y) 
\cong \Ext^{\bullet}_{A_n}(X, Y\downarrow _{A_n})\, . 
$
(see for example~\cite[2.8.4]{benson}). %[Benson vol 1, 2.8.4]).
Furthermore, every $D_n$-module $X$ is relative $A_n$-projective, 
that is, the multiplication map $X\otimes_{A_n}D_n \to X$ splits 
(by a 'Maschke-type'
argument), using ${\rm char}(k)\neq 2$.

In Section \ref{Dnprops} we have defined a partial order 
on the labels for the simple modules, 
and have seen that $D_n$ is quasi-hereditary with respect to this order.

\begin{lemma}\label{lemind}
For each $i^\epsilon$ with $\epsilon=+$ or $\epsilon=-$ we have
\begin{enumerate}
\item[(a)]
$\Delta(i^\epsilon)\downarrow _{A_n} \cong \Delta(i)$ and
$\nabla(i^\epsilon)\downarrow_{A_n} \cong \nabla(i)$.
\item[(b)]
$\Delta(i)\otimes_{A_n}D_n\cong \Delta(i^+)\oplus \Delta(i^-)$
and $\nabla(i)\otimes_{A_n}D_n \cong \nabla(i^+)\oplus \nabla(i^-)$.
\item[(c)]
Suppose $X$ is any $A_n$-module. Then $X\in \mathcal{F}(\Delta_{A_n})$
if and only if $X\otimes_{A_n}D_n$ belongs to 
$\mathcal{F}(\Delta_{D_n})$.
\end{enumerate}
\end{lemma}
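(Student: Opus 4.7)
My plan is to prove (b) first, deduce (a) from it together with the description $\Delta(i^\pm)=P(i^\pm)/P((i+1)^\mp)$, and then combine both for (c). For (b), I would apply the exact functor $-\otimes_{A_n}D_n$ (exact because $D_n$ is $A_n$-free) to the $A_n$-short exact sequence $0\to P(i+1)\to P(i)\to \Delta(i)\to 0$. Using the idempotent decomposition $e_i=e_{i^+}+e_{i^-}$ one sees $P(j)\otimes_{A_n}D_n=e_jD_n\cong P(j^+)\oplus P(j^-)$. The key step is to verify that the induced monomorphism $P((i+1)^+)\oplus P((i+1)^-)\hookrightarrow P(i^+)\oplus P(i^-)$ is the sign-swapping embedding, i.e.\ the direct sum of $P((i+1)^-)\hookrightarrow P(i^+)$ and $P((i+1)^+)\hookrightarrow P(i^-)$ coming from $(*)$. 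This follows from inspecting the space $e_{i+1}D_ne_i$: by the quiver and relations of $D_n$ described in the appendix, only its two sign-swapping components are nonzero. Taking cokernels then yields $\Delta(i^+)\oplus \Delta(i^-)$; the $\nabla$-case is dual.

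For (a), I would first deduce $P(i^\epsilon)\downarrow_{A_n}\cong P(i)$ by restricting the identity from (b) for projectives: $(P(i^+)\oplus P(i^-))\downarrow_{A_n}\cong P(i)\otimes_{A_n}(D_n\downarrow_{A_n})\cong P(i)^{\oplus 2}$, since $D_n$ is free of rank $2$ over $A_n$ and the $\sigma$-action fixes the isomorphism class of $P(i)$ (because $\sigma$ fixes each idempotent $e_i$ of $A_n$). The $\sigma$-symmetry then gives $P(i^+)\downarrow\cong P(i^-)\downarrow$, so Krull--Schmidt forces each to be $P(i)$. Restricting the sequence $0\to P((i+1)^{\mp})\to P(i^\pm)\to \Delta(i^\pm)\to 0$ now gives $\Delta(i^\pm)\downarrow_{A_n}\cong P(i)/P(i+1)=\Delta(i)$, and the costandard statement is analogous.

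For (c), the forward direction is immediate: applying $-\otimes_{A_n}D_n$ to a $\Delta_{A_n}$-filtration of $X$ produces a filtration of $X\otimes_{A_n}D_n$ whose successive quotients are $\Delta(i_k^+)\oplus\Delta(i_k^-)$ by (b), which may then be refined to a $\Delta_{D_n}$-filtration. Conversely, if $X\otimes_{A_n}D_n\in \mathcal{F}(\Delta_{D_n})$, then part (a) applied to each filtration quotient shows that $(X\otimes_{A_n}D_n)\downarrow_{A_n}\in \mathcal{F}(\Delta_{A_n})$. Since $X\cong X\otimes 1$ embeds as an $A_n$-submodule of this restriction, closure of $\mathcal{F}(\Delta_{A_n})$ under submodules (the $A_n$-analogue of Lemma~\ref{lm:submodules}, cf.\ \cite{bhrr}) then forces $X\in \mathcal{F}(\Delta_{A_n})$. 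The main obstacle is the identification in (b) of the induced map's sign-swapping form, which requires carefully tracing the embedding $A_n\hookrightarrow D_n$ and the explicit quiver data for $D_n$ from the appendix.
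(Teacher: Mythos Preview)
Your overall strategy is sound and does lead to a valid proof, but it differs substantially from the paper's, and one of your supporting claims is false.

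\textbf{The error.} In part (b) you assert that ``only the two sign-swapping components of $e_{i+1}D_ne_i$ are nonzero''. This is not true: for instance in $D_3$ the path $\alpha_{1^-}\beta_{2^-}\alpha_{1^+}$ gives a nonzero element of $e_{2^+}D_3e_{1^+}$ (one sees $L(2^+)$ occurs once in $P(1^+)$, inside the $\Delta(3^+)$-layer). What you actually need is the weaker statement that the \emph{specific element} $\alpha_i\in A_n\subset D_n$ lies in the sign-swapping summand. This follows from the skew-group-ring description $D_n\cong A_n\langle g\rangle$: since $g$ acts on $R_n$ by $T\mapsto -T$, one computes $g\alpha_i=-\alpha_i g$ in $A_n$, whence $e_{i^+}\alpha_i=\alpha_i e_{(i+1)^-}$ and $e_{i^-}\alpha_i=\alpha_i e_{(i+1)^+}$. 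With this correction your argument for (b) goes through, and the deductions of (a) and (c) are fine as written.

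\textbf{Comparison with the paper.} The paper proceeds in the opposite order and with much less computation. Part (a) is immediate from the explicit description of $\Delta(i^\pm)$ and $\nabla(i^\pm)$ as uniserial modules: restriction simply forgets the signs on the composition factors. Part (b) then follows in one line from relative $A_n$-projectivity: the multiplication map $\Delta(i^\epsilon)\otimes_{A_n}D_n\to\Delta(i^\epsilon)$ splits, so by (a) both $\Delta(i^+)$ and $\Delta(i^-)$ are summands of $\Delta(i)\otimes_{A_n}D_n$, and a dimension count finishes. For (c) the paper uses the Ext-criterion $X\in\mathcal{F}(\Delta)\Leftrightarrow\Ext^1(X,\nabla(j))=0$ for all $j$, together with Shapiro's Lemma and (a); this handles both directions simultaneously. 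Your route --- explicit quiver computation for (b), Krull--Schmidt for (a), and submodule-closure for the converse in (c) --- is correct but heavier, and it relies on tracing the embedding $A_n\hookrightarrow D_n$ through the quiver, which the paper's argument avoids entirely.
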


%{\it Proof } 
\begin{proof}
Part (a) is easily seen directly.

(b) We know that the multiplication
map $\Delta(i^\epsilon)\otimes_{A_n}D_n \to \Delta(i^\epsilon)$
splits. Using 
part (a), we get that $\Delta(i)\otimes_{A_n}D_n$ has a direct summand
isomorphic to $\Delta(i^+)$ and also a direct summand isomorphic to $\Delta(i^-)$. Hence by dimensions, $\Delta(i)\otimes_{A_n}D_n$ must be the
direct sum as stated in (b).

(c) For a module $X$ of any quasi-hereditary algebra $\Lambda$,
it is known that $X\in {\mathcal F}(\Delta)={\mathcal F}(\Delta_{\Lambda})$ 
if and only if
$\Ext^1(X, \nabla(j))=0$ for all $j$ 
(\cite[appendix A]{Don98}). 

Now take $\Lambda$ to be $A_n$ or $D_n$, and use Shapiro's Lemma 
and part (a),
$$\Ext_{A_n}^1(X, \nabla(j)) \cong
\Ext_{D_n}^1(X\otimes_{A_n}D_n, \nabla(j^\epsilon))
$$
So $X$ has a $\Delta$-filtration if and only if the induced module 
$X\otimes_{A_n}D_n$ has a $\Delta$-filtration.
\end{proof}

%{\tt this should set up the  tool to relate exts between
%$\Delta(I), \Delta(J)$ and the restrictions, will continue later}

\begin{cor}
$$
  \Ext_{A_n}^{\bullet}(\Delta(i), \Delta(j))
\cong
  \Ext_{D_n}^{\bullet}(\Delta(i^+), \Delta(j^{\epsilon}))
\oplus
  \Ext_{D_n}^{\bullet}(\Delta(i^-), \Delta(j^{\epsilon}))
$$
for a sign $\epsilon$.
\end{cor}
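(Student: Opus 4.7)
The plan is entirely formal: the corollary is obtained by feeding Lemma~\ref{lemind} into Shapiro's Lemma and using that $\Ext$ commutes with direct sums in the first argument. No genuine obstacle is expected here; the statement is essentially a repackaging of parts (a) and (b) of the preceding lemma.

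First I would apply Shapiro's Lemma in the form recalled just before the lemma, taking $X = \Delta(i)$ as an $A_n$-module and $Y = \Delta(j^{\epsilon})$ as a $D_n$-module for the chosen sign $\epsilon$. This gives
$$\Ext^{\bullet}_{D_n}\bigl(\Delta(i)\otimes_{A_n} D_n,\ \Delta(j^{\epsilon})\bigr) \cong \Ext^{\bullet}_{A_n}\bigl(\Delta(i),\ \Delta(j^{\epsilon})\!\downarrow_{A_n}\bigr).$$
Next I would use Lemma~\ref{lemind}(a) on the right-hand side to identify $\Delta(j^{\epsilon})\!\downarrow_{A_n}$ with $\Delta(j)$, so that the right-hand side becomes $\Ext^{\bullet}_{A_n}(\Delta(i),\Delta(j))$, which is exactly the expression sought.

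For the left-hand side I would invoke Lemma~\ref{lemind}(b), which identifies the induced module $\Delta(i)\otimes_{A_n} D_n$ with $\Delta(i^+)\oplus\Delta(i^-)$. Since $\Ext^{\bullet}_{D_n}(-\,,\,\Delta(j^{\epsilon}))$ is additive in its first variable, the left-hand side splits as
$$\Ext^{\bullet}_{D_n}\bigl(\Delta(i^+),\Delta(j^{\epsilon})\bigr) \oplus \Ext^{\bullet}_{D_n}\bigl(\Delta(i^-),\Delta(j^{\epsilon})\bigr).$$
Combining the two identifications yields the claimed isomorphism. The sign $\epsilon$ is arbitrary precisely because Shapiro's Lemma applies with either choice of $Y$, and in each case the restriction to $A_n$ forgets the sign and returns the same $A_n$-module $\Delta(j)$.
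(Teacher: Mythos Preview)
Your proof is correct and follows exactly the same route as the paper's own argument: apply Shapiro's Lemma with $X=\Delta(i)$ and $Y=\Delta(j^\epsilon)$, use Lemma~\ref{lemind}(a) to identify $\Delta(j^\epsilon)\!\downarrow_{A_n}\cong\Delta(j)$, and then Lemma~\ref{lemind}(b) together with additivity of $\Ext$ in the first variable to split the induced module.
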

\begin{proof}
Since $\Delta(j^{\epsilon})\downarrow_{A_n} \cong \Delta(j)$, 
by applying Shapiro's lemma:
$
  \Ext_{A_n}^{\bullet}(\Delta(i), \Delta(j))
\cong
  \Ext_{D_n}^{\bullet}(\Delta(i)\otimes_{A_n}D_n, \Delta(j^{\epsilon})).
$
This is isomorphic to
$
  \Ext_{D_n}^{\bullet}(\Delta(i^+), \Delta(j^{\epsilon}))
\oplus
  \Ext_{D_n}^{\bullet}(\Delta(i^-), \Delta(j^{\epsilon}))
$
using Lemma \ref{lemind}~(b).
\end{proof}

Suppose $I$ is an (alternatingly) signed subset of $[n]^\pm$.
We let $-I$ denote the signed set which has the same underlying
unsigned set $I_0$ as $I$ but with opposite signs to $I$.
That is, $i^\epsilon \in I$ if and
only if $i^{\bar{\epsilon}} \in -I$.
Recall that we use $I_0$ for the unsigned version of $I$.
\begin{prop}\label{prop:ext}
For $I$ and $J$ %(alternatingly) 
signed subsets of 
$[n]^\pm$ we have:
\begin{enumerate}
\item[(a)]
$\Delta(I)\downarrow _{A_n} \cong \Delta(I_0)$ 
\item[(b)]
$\Delta(I_0)\otimes_{A_n}D_n\cong \Delta(I)\oplus \Delta(-I)$
\item[(c)]
$
  \Ext_{A_n}^{\bullet}(\Delta(I_0), \Delta(J_0))
\cong
  \Ext_{D_n}^{\bullet}(\Delta(I), \Delta(J))
\oplus
  \Ext_{D_n}^{\bullet}(\Delta(-I), \Delta(J))\, .
$
\end{enumerate}
\end{prop}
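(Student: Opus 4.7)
The plan is to prove the three parts in sequence, since (b) uses (a) and (c) uses both.

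For (a), I would induct on $|I|$. The base case $|I|=1$ is exactly Lemma~\ref{lemind}(a). For the inductive step, let $i_1^{\epsilon_1}$ be the element of $I$ with largest underlying index and set $I' = I\setminus\{i_1^{\epsilon_1}\}$. The nested structure of $\Delta(I)$ inside $P(1^\pm)$ gives a short exact sequence
$$0\to \Delta(I')\to \Delta(I)\to \Delta(i_1^{\epsilon_1})\to 0.$$
Restriction is exact, so combined with Lemma~\ref{lemind}(a) and the inductive hypothesis this produces a short exact sequence of $A_n$-modules
$$0\to \Delta(I_0')\to \Delta(I)\downarrow_{A_n}\to \Delta(i_1)\to 0.$$
In particular $\Delta(I)\downarrow_{A_n}$ is $\Delta$-filtered with $\Delta$-support $I_0$ and has simple socle (the restriction of $L(1^\pm)$, which remains simple over $A_n$). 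Invoking the $A_n$-analogue of Lemma~\ref{lm:signed} from~\cite{bhrr} — submodules of $P_{A_n}(1)$ are in bijection with subsets of $[n]$ via their $\Delta$-support — together with the fact that the embedding $\Delta(I)\hookrightarrow P(1^\pm)$ restricts to an embedding of $\Delta(I)\downarrow_{A_n}$ into $P(1^\pm)\downarrow_{A_n}$ (whose indecomposable summands are copies of $P_{A_n}(1)$), we identify the middle term with $\Delta(I_0)$.

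For (b), I would use relative projectivity: since $\Delta(I)$ is a $D_n$-module and $\mathrm{char}\,k\neq 2$, the multiplication map $\Delta(I)\otimes_{A_n}D_n\to \Delta(I)$ splits, so $\Delta(I)$ is a direct summand of $\Delta(I)\downarrow_{A_n}\otimes_{A_n}D_n\cong \Delta(I_0)\otimes_{A_n}D_n$ by (a). The same argument gives $\Delta(-I)$ as a summand. The two are non-isomorphic, since $\Delta(I)$ has top $L(i_1^{\epsilon_1})$ while $\Delta(-I)$ has top $L(i_1^{\bar\epsilon_1})$. Hence $\Delta(I)\oplus \Delta(-I)$ is a summand, and equality follows by dimension count: $D_n$ is free of rank $2$ over $A_n$, so $\dim \Delta(I_0)\otimes_{A_n}D_n = 2\dim \Delta(I_0) = \dim \Delta(I) + \dim \Delta(-I)$, using (a). For (c), pick any signed version $J$ of $J_0$ and apply Shapiro's lemma:
$$\Ext_{A_n}^{\bullet}(\Delta(I_0), \Delta(J_0)) \cong \Ext_{D_n}^{\bullet}(\Delta(I_0)\otimes_{A_n}D_n, \Delta(J)),$$
where we used (a) to rewrite $\Delta(J_0)\cong \Delta(J)\downarrow_{A_n}$. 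Substituting the decomposition of (b) gives the claimed direct sum.

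The main obstacle is part (a). The filtration-level computation is routine from Lemma~\ref{lemind}(a), but identifying $\Delta(I)\downarrow_{A_n}$ with the specific submodule $\Delta(I_0)$ — rather than merely a $\Delta$-filtered $A_n$-module with the right $\Delta$-support — requires controlling how the embedding $\Delta(I)\hookrightarrow P(1^\pm)$ behaves under restriction and appealing to the uniqueness statement for submodules of $P_{A_n}(1)$ on the $A_n$ side. Once (a) is in place, (b) and (c) are essentially formal consequences of the skew group ring structure together with Shapiro's lemma.
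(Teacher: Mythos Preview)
Your approach matches the paper's almost exactly: for (a) you establish a $\Delta$-filtration with support $I_0$, then use that restriction preserves the embedding into $P(1^{\pm})$ together with the uniqueness of submodules of $P_{A_n}(1)$ with given $\Delta$-support; (b) and (c) are handled via relative projectivity and Shapiro's lemma just as in the paper.

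One slip to fix: in your short exact sequence for (a), the roles are reversed. Since $\Delta(I)$ sits inside $P(1^{\pm})$ and the $\Delta$-filtration of $P(1^{\pm})$ has the larger-indexed standard modules at the bottom, the element $i_1^{\epsilon_1}$ with \emph{largest} index gives a submodule, not a quotient. The correct sequence is
\[
0\to \Delta(i_1^{\epsilon_1})\to \Delta(I)\to \Delta(I')\to 0,
\]
or equivalently you could peel off the smallest element as a quotient. This does not affect your argument, since you only use the sequence to obtain a $\Delta$-filtration with support $I_0$, and the embedding-plus-uniqueness step is independent of it. (Also, $P(1^{\pm})\downarrow_{A_n}$ is a single copy of $P_{A_n}(1)$, not several, but your phrasing already covers that case.)
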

\begin{proof}
(a) The module $\Delta(I)\downarrow _{A_n}$
has a $\Delta$-filtration by Lemma \ref{lemind}~(a)
and induction on filtration length.
It also has $\Delta$-support equal to $I_0$ as an $A_n$-module.
Since restriction is exact 
 $\Delta(I)\downarrow _{A_n}$ remains a submodule of 
 $P(1^\epsilon) \downarrow _{A_n}$ ($\epsilon$ of the appropriate
sign)
and hence $\Delta(I)\downarrow _{A_n}$ is a submodule of $P(1)$.
Thus  $\Delta(I)\downarrow _{A_n} \cong \Delta(I_0)$ as this is the
only submodule of $P(1)$ with the same $\Delta$-support. 

(b) 
Using part (a) we may argue similarly to the proof of 
Lemma \ref{lemind}~(b)
to show that both $\Delta(I)$ and $\Delta(-I)$ are direct
summands of $\Delta(I_0) \otimes_{A_n} D_n$ and hence, by dimensions,
this tensor product must be equal to the direct sum.
%We have 
%$P(i)\otimes_{A_n}D_n\cong P(i^+)\oplus P(i^-)$
%using a similar argument to that in the proof of lemma
%\ref{lemind}~(b).
%Since we are working with a field, tensoring is exact so
%$\Delta(I_0)\otimes_{A_n}D_n$ is 
%a submodule of $P(i^+)\oplus P(i^-)$.
%Also 
%$\Delta(I_0)\otimes_{A_n}D_n$ has a $\Delta$-filtration by lemma
%\ref{lemind}~(b), the exactness of the tensor product and induction on
%$\Delta$-filtration length. It has $\Delta$-support given by $I\cup
%-I$.

(c) This result follows as in the proof of the previous corollary.
\end{proof}

%%%%%%%%%%%%%%%%%%%%%%%%%%%%%%%%%%%%%%%%%%%%%%
%
\section{Extensions between the $\Delta(I)$}\label{s:exts}
%
%%%%%%%%%%%%%%%%%%%%%%%%%%%%%%%%%%%%%%%%

We are now ready to study the extensions between 
two $\Delta$-filtered modules $\Delta(I)$ and 
$\Delta(J)$. 
In this section, we give a formula for the dimension of 
$\Ext^1(\Delta(I),\Delta(J))$ and $\Hom(\Delta(I),\Delta(J))$ 
over both $A_n$ and $D_n$. 

Unless stated otherwise, homomorphism and extension 
spaces are taken over $D_n$. 
%To abbreviate the notation, we will usually 
%drop the subscript $D_n$. 
%
We will furthermore write $\hom(A,B)$ for $\dim\Hom(A,B)$ 
and $\ext^1(A,B)$ for $\dim\Ext^1(A,B)$. 

\begin{lemma}\label{lem:ext1}
In $A_n$: 
for $I_0$ and $J_0$ unsigned subsets of $[n]$, we have
$$%\begin{multline*}
\ext^1_{A_n}(\Delta(I_0),\Delta(J_0))=\\
\hom_{A_n}(\Delta(I_0),\Delta(J_0))-
\hom_{A_n}(\Delta(I_0),P(1))+
\hom_{A_n}(\Delta(I_0),\nabla(J_0^c)).
$$%\end{multline*}

And in $D_n$:
for $I$ and $J$ signed subsets of $[n]^\pm$, we have
$$%\begin{multline*}
\ext_{D_n}^1(\Delta(I),\Delta(J))=\\
\hom_{D_n}(\Delta(I),\Delta(J))-
\hom_{D_n}(\Delta(I),P(1^\epsilon))+
\hom_{D_n}(\Delta(I),\nabla(J^c))
$$%\end{multline*}
where $\epsilon$ is the sign of the largest element in $J$ if $n$ is
odd and the opposite sign if $n$ is even.
\end{lemma}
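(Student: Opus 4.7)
The plan is to obtain both formulas from a single long exact sequence argument, using the short exact sequence furnished by Lemma~\ref{lm:s-e-s}. Consider first the $D_n$ case. By Lemma~\ref{lm:s-e-s} there is a short exact sequence
\[
0 \to \Delta(J) \to P(1^\epsilon) \to \nabla(J^c) \to 0,
\]
where the sign $\epsilon$ is the one forced by Definition~\ref{defn:Delta}~(i) applied to $J$ (this is where we have to match conventions: $\Delta(J) \subset P(1^+)$ iff $s((-1)^n) = \overline{\epsilon_k}$ with $\epsilon_k$ the sign of the smallest element of $J$, and the alternating condition then translates this into the condition about the sign of the largest element stated in the lemma).

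I would then apply the functor $\Hom_{D_n}(\Delta(I), -)$. This yields a long exact sequence
\[
0 \to \Hom(\Delta(I),\Delta(J)) \to \Hom(\Delta(I),P(1^\epsilon)) \to \Hom(\Delta(I),\nabla(J^c)) \to \Ext^1(\Delta(I),\Delta(J)) \to \Ext^1(\Delta(I),P(1^\epsilon)) \to \cdots
\]
The next term $\Ext^1_{D_n}(\Delta(I),P(1^\epsilon))$ vanishes because $P(1^\epsilon)$ is projective (alternatively, it follows from Lemma~\ref{lem:pdone} since $\Delta(I)$ has projective dimension at most one). Hence the sequence terminates after the $\Ext^1(\Delta(I),\Delta(J))$ term, and taking the alternating sum of dimensions across this four-term exact sequence gives exactly
\[
\ext^1_{D_n}(\Delta(I),\Delta(J)) = \hom_{D_n}(\Delta(I),\Delta(J)) - \hom_{D_n}(\Delta(I),P(1^\epsilon)) + \hom_{D_n}(\Delta(I),\nabla(J^c)).
\]

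For the $A_n$ statement I would use the analogous short exact sequence for $A_n$ from \cite{bhrr} (the unsigned analogue that Lemma~\ref{lm:s-e-s} is modelled on),
\[
0 \to \Delta(J_0) \to P(1) \to \nabla(J_0^c) \to 0,
\]
and repeat the Euler-characteristic argument verbatim using $\Hom_{A_n}(\Delta(I_0),-)$, together with projectivity of $P(1)$ over $A_n$. The only genuine bookkeeping step is verifying the description of $\epsilon$: one has to check, using the alternating-sign condition and the parity rule in Definition~\ref{defn:Delta}~(i), that $\epsilon$ equals the sign of the largest element of $J$ for $n$ odd and the opposite sign for $n$ even. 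This is a direct case check on the parity of the cardinality of $J$. No step here looks like a serious obstacle: the content really is just the long exact sequence plus projectivity of $P(1^\epsilon)$, with the Definition~\ref{defn:Delta} sign convention dictating which $\epsilon$ appears.
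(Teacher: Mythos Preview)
Your overall strategy is exactly the one used in the paper: apply $\Hom_{D_n}(\Delta(I),-)$ to the short exact sequence of Lemma~\ref{lm:s-e-s} and read off the alternating sum of dimensions. However, your justification for the vanishing of $\Ext^1_{D_n}(\Delta(I),P(1^\epsilon))$ is wrong on both counts. Projectivity of $P(1^\epsilon)$ gives $\Ext^1(P(1^\epsilon),-)=0$, not $\Ext^1(-,P(1^\epsilon))=0$; and Lemma~\ref{lem:pdone} (projective dimension $\le 1$ for $\Delta(I)$) only kills $\Ext^{\ge 2}(\Delta(I),-)$, not $\Ext^1$. The paper instead uses that $P(1^\epsilon)$ is a \emph{tilting} module (equivalently here, it is projective--injective, being the injective hull of $L(1^\pm)$ as noted in Section~\ref{Dnprops}); for any $X\in\mathcal{F}(\Delta)$ and any tilting module $T$ one has $\Ext^1(X,T)=0$. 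Once you replace ``projective'' by ``tilting'' (or ``injective'') your argument matches the paper's.
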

\begin{proof}
We prove the signed version --- the unsigned version follows
similarly.
This lemma follows by 
applying $\Hom_{D_n}(\Delta(I),\--)$ to the following short exact 
sequence 
\[
0\to \Delta(J)\to P(1^{\epsilon}) 
\to\nabla(J^c) \to 0
\]
from Lemma~\ref{lm:s-e-s},
and noting that 
 $\Ext_{D_n}^1(\Delta(I),P(1^{\epsilon}))=0$ 
as $P(1^\epsilon)$ is a tilting module.
\end{proof}

All the terms on the right hand side of the expression in \ref{lem:ext1}  are
calculable.
We will get the first term in Proposition~\ref{prop:hom-I-J}. 

The second term is the sum (\cite[appendix A]{Don98})
$$\sum_{i^\delta\in [n]^\pm} 
%(\Delta(I):\Delta(i^\delta))
\dim_{\Delta}\Delta(I)_{i^\delta}
%(T(j_l^\epsilon):\nabla(i^\delta))
\dim_{\nabla}P(1^\epsilon)_{i^\delta}
.$$ 

The $\nabla$-support of $P(1^{\epsilon})$ is 
$\{n{^\epsilon}, (n-1)^\epsilon,\ldots, 1^\epsilon\}$ for $n$ odd and
$\{n^{\bar{\epsilon}}, (n-1)^{\bar{\epsilon}},\ldots, 
1^{\bar{\epsilon}}\}$ for $n$ even 
and so this sum is given by
$|I \cap \{n^{\epsilon}, (n-1)^\epsilon,\ldots, 1^\epsilon\}|$ if $n$
is odd and
$|I \cap 
\{n^{\bar{\epsilon}}, (n-1)^{\bar{\epsilon}},\ldots, 
1^{\bar{\epsilon}}\}|$ for $n$ even.

The third term is given by the sum
$$\sum_{i^\delta\in [n]^\pm} 
%(\Delta(I):\Delta(i^\delta))
\dim_{\Delta}\Delta(I)_{i^\delta}
%(\nabla(J^c):\nabla(i^\delta))
\dim_{\nabla}\nabla(J^c)_{i^\delta}
$$ 
and this is equal to the number of elements that are 
both in $I$ and in $J^c$, i.e. 
$|I\cap J^c|$.

\begin{prop} \label{prop:I-in-J}
$\ext^1_{D_n}(\Delta(I),\Delta(J))=0$ 
for all $I_0\subset J_0$ or 
$J_0\subset I_0$. 
\end{prop}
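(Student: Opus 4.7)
The strategy is to apply $\Hom_{D_n}(\Delta(I),-)$ to the short exact sequence
\[
0 \to \Delta(J) \to P(1^\epsilon) \to \nabla(J^c) \to 0
\]
provided by Lemma~\ref{lm:s-e-s}. Since $P(1^\epsilon)$ is tilting, $\Ext^1_{D_n}(\Delta(I), P(1^\epsilon))$ vanishes, so $\ext^1_{D_n}(\Delta(I),\Delta(J))$ is the cokernel of
\[
\Hom(\Delta(I), P(1^\epsilon)) \longrightarrow \Hom(\Delta(I), \nabla(J^c)).
\]
The two cases $I_0\subset J_0$ and $J_0\subset I_0$ are then handled separately.

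For $I_0\subset J_0$ I would show the target space above is zero. The computation given immediately after Lemma~\ref{lem:ext1} identifies $\hom(\Delta(I),\nabla(J^c))$ with $|I\cap J^c|$. Since $J^c_0 = [n]\setminus J_0$, the assumption $I_0\subset J_0$ forces $I_0\cap J^c_0 = \emptyset$, and a fortiori $I\cap J^c = \emptyset$. Hence $\Hom(\Delta(I),\nabla(J^c))=0$ and the extension group vanishes.

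For $J_0\subset I_0$ I would invoke Proposition~\ref{prop:ext}(c), which yields
\[
\Ext^1_{A_n}(\Delta(I_0), \Delta(J_0)) \;\cong\; \Ext^1_{D_n}(\Delta(I), \Delta(J)) \;\oplus\; \Ext^1_{D_n}(\Delta(-I), \Delta(J)).
\]
It therefore suffices to show that $\Ext^1_{A_n}(\Delta(I_0), \Delta(J_0)) = 0$. This is the unsigned analogue of the claim and is one of the vanishing results of Br\"ustle--Hille--R\"ohrle--Ringel in~\cite{bhrr}; alternatively, one can check it directly from the unsigned version of the formula in Lemma~\ref{lem:ext1}, which reduces the vanishing to the combinatorial identity $\hom_{A_n}(\Delta(I_0), \Delta(J_0)) = |J_0|$ for $J_0\subset I_0$ (here $\hom_{A_n}(\Delta(I_0), P(1)) = |I_0|$ and $\hom_{A_n}(\Delta(I_0), \nabla(J^c_0)) = |I_0\setminus J_0|$, so the formula gives $\ext^1_{A_n} = 0$ precisely when the stated hom-dimension identity holds).

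The main obstacle is the second case: a direct signed computation of $\hom_{D_n}(\Delta(I), \Delta(J))$ is delicate, because this dimension depends on the sign-compatibility between $I$ and $J$ and not merely on the unsigned sets $I_0$ and $J_0$. The reduction to $A_n$ via Proposition~\ref{prop:ext}(c) bypasses this difficulty by effectively summing over the two signed versions $I$ and $-I$, which averages out the sign dependence and leaves only the combinatorics of the underlying unsigned sets.
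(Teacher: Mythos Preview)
Your proposal is correct. The paper's proof, however, is a single sentence: it reduces \emph{both} cases to the unsigned result for $A_n$ via Proposition~\ref{prop:ext}(c) and then cites~\cite{bhrr}. You take this route only for the case $J_0\subset I_0$, while for $I_0\subset J_0$ you give a direct argument showing $\Hom(\Delta(I),\nabla(J^c))=0$ because $I_0\cap J_0^c=\emptyset$. That direct argument is valid and pleasantly self-contained, but it is not needed: the reduction to $A_n$ works symmetrically for both inclusions, since the result in~\cite{bhrr} already covers both $I_0\subset J_0$ and $J_0\subset I_0$. So your case split and the accompanying discussion of the ``main obstacle'' can be replaced by the uniform one-line reduction. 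Conversely, your direct argument for $I_0\subset J_0$ has the virtue of not relying on~\cite{bhrr} at all for that half.
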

\begin{proof}
This follows from the result for the unsigned sets $I_0$ and $J_0$
in \cite{bhrr}
and Proposition \ref{prop:ext}~(c). 
\end{proof}

%{\tt{Generalise this to $M\in \doog$ with delta support in $I$ or
%a superset of $I$ --- not true in general for all supersets.}}

\begin{cor}\label{cor:support(M)-I}
Suppose $M \in  \doog$ with $\Delta$-support $J$. Then
$\ext^1_{D_n}(\Delta(I),M)=0$ 
and 
$\ext^1_{D_n}(M,\Delta(I))=0$ 
if $J_0 \subset I_0$.
\end{cor}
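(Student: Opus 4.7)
The plan is to reduce the statement to Proposition \ref{prop:I-in-J} by induction on the length of a $\Delta$-filtration of $M$. Since $M \in \mathcal{F}(\Delta)$, fix a filtration
\[
0 = M_0 \subset M_1 \subset M_2 \subset \cdots \subset M_t = M
\]
whose successive quotients $M_s/M_{s-1}$ are standard modules $\Delta(j_s^{\epsilon_s})$. By definition of $\Delta$-support, each $j_s \in J_0$, so the hypothesis $J_0 \subset I_0$ forces $\{j_s\} \subset I_0$ for every $s$.

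For the base case $t = 1$, $M = \Delta(j_1^{\epsilon_1})$ with unsigned support $\{j_1\} \subset I_0$, so Proposition \ref{prop:I-in-J} gives directly both $\ext^1_{D_n}(\Delta(I), M) = 0$ and $\ext^1_{D_n}(M, \Delta(I)) = 0$. For the inductive step, consider the short exact sequence
\[
0 \to M_{t-1} \to M \to \Delta(j_t^{\epsilon_t}) \to 0.
\]
The module $M_{t-1}$ lies in $\mathcal{F}(\Delta)$ with $\Delta$-support contained in $J$, hence with unsigned support contained in $J_0 \subset I_0$, so the inductive hypothesis applies to $M_{t-1}$.

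Applying $\Hom_{D_n}(\Delta(I), -)$ to the above short exact sequence yields the long exact sequence
\[
\cdots \to \Ext^1_{D_n}(\Delta(I), M_{t-1}) \to \Ext^1_{D_n}(\Delta(I), M) \to \Ext^1_{D_n}(\Delta(I), \Delta(j_t^{\epsilon_t})) \to \cdots
\]
whose outer terms vanish (the left by induction, the right by Proposition \ref{prop:I-in-J}), forcing the middle term to vanish as well. The argument for $\ext^1_{D_n}(M, \Delta(I)) = 0$ is entirely dual, using $\Hom_{D_n}(-, \Delta(I))$ applied to the same short exact sequence together with the symmetric form of Proposition \ref{prop:I-in-J}.

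There is no real obstacle here; the whole content has already been concentrated in Proposition \ref{prop:I-in-J}, and the corollary is just the propagation of that vanishing along a $\Delta$-filtration via the standard long exact sequence in $\Ext$.
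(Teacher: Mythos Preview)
Your argument is correct and is exactly the approach the paper takes: apply Proposition~\ref{prop:I-in-J} to each standard subquotient $\Delta(j^{\epsilon})$ with $\{j\}\subset I_0$, and then propagate the vanishing along a $\Delta$-filtration of $M$ by induction using the long exact sequence in $\Ext$. The paper's proof is simply the one-sentence summary of what you have written out in detail.
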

\begin{proof}
By the previous lemma
$\ext^1_{D_n}(\Delta(I),\Delta(j^\epsilon))=0$ 
and 
$\ext^1_{D_n}(\Delta(j^\epsilon),\Delta(I))=0$ for
$j \in I_0$. Induction on the $\Delta$-filtration length of $M$ then gives the
result.
\end{proof}

We now focus on calculating  $\Hom_{A_n}(\Delta(I), \Delta(J))$ and 
$\Hom_{D_n}(\Delta(I), \Delta(J))$.

Let us start introducing the necessary notation first. 
Let $I_0, J_0$ be subsets of $[n]$, with
$I_0 = \{ i_1 > i_2 > \ldots >i_r\}$ and
similarly $J_0=\{ j_1 > j_2 > \ldots > j_s\}$.

Call a subset $K$,  of $I_0$ an {\em initial segment} if it is of 
the form
$K:= \{ i_{r-u} > i_{r-u+1} > \ldots > i_r\}$ for some $u\le |I|$ 
(so in total there are $|I|$ nonempty initial segments).

Now define an {\em order $\leq$} 
on the subsets of $[n]$.
Let $V, W$ be such subsets, say $V=\{ v_1 > v_2 > \ldots > v_x\}$
and $W=\{ w_1 > w_2 > \ldots > w_y\}$. Then set
$V \leq W$ if and only if
$$\begin{array}{ll}
 & x\le y \mbox{ (ie $|V|\le |W|$)} \\ 
\mbox{and } & 
v_1 \leq w_1, v_{2} \leq w_{2}, \ldots, v_x\leq w_{x}.
\end{array}$$

\begin{prop}\label{prop:dim-hom}
Let $I_0=\{i_1>i_2>\dots>i_r\}$ and $J_0=\{j_1>j_2>\dots>j_s\}$ 
be subsets of $[n]$. 
The dimension of $\Hom_{A_n}(\Delta(I_0), \Delta(J_0))$ is equal
to the number of initial segments $K$ of $I_0$ such that
$K\leq J_0$.
\end{prop}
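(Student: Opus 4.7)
The plan is to construct an explicit basis of $\Hom_{A_n}(\Delta(I_0), P(1))$ indexed by all initial segments of $I_0$, and then identify $\Hom_{A_n}(\Delta(I_0), \Delta(J_0))$ as the subspace spanned by those basis elements $\psi_K$ for which $K \leq J_0$.

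The key structural ingredient is that for every initial segment $K$ of $I_0$, the complementary set $I_0 \setminus K = \{i_1, \ldots, i_{r-|K|}\}$ satisfies $I_0 \setminus K \leq I_0$ in the order on subsets, so $\Delta(I_0 \setminus K)$ is a submodule of $\Delta(I_0)$. A dimension count together with the fact that the quotient has simple socle $L(1)$ (hence lies in $\mathcal F(\Delta)$ by the analogue of Lemma~\ref{lm:socle} for $A_n$, and is thus determined by its $\Delta$-support) forces $\Delta(I_0)/\Delta(I_0 \setminus K) \cong \Delta(K)$. Composing the projection with the inclusion $\Delta(K) \hookrightarrow P(1)$ yields
\[
\psi_K \colon \Delta(I_0) \twoheadrightarrow \Delta(I_0)/\Delta(I_0 \setminus K) \cong \Delta(K) \hookrightarrow P(1)
\]
for each initial segment $K$ of $I_0$. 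Since $P(1)=T(n)$ is tilting with $\nabla$-filtration containing each $\nabla(i)$ exactly once, BGG-type reciprocity gives $\dim \Hom_{A_n}(\Delta(I_0), P(1)) = |I_0|$, matching the number of initial segments. The initial segments form a chain $K_0 \subsetneq K_1 \subsetneq \cdots \subsetneq K_{r-1} = I_0$, and the images $\Delta(K_u)$ of the $\psi_{K_u}$ form a strictly increasing chain of submodules of $P(1)$, so the $\psi_{K_u}$ are linearly independent and hence form a basis.

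For a general $\phi = \sum_K c_K \psi_K$, let $K_{\max}$ be the largest initial segment with $c_{K_{\max}} \neq 0$. The chain structure gives $\operatorname{im}(\phi) \subseteq \Delta(K_{\max})$, and equality follows by projecting to $P(1)/\Delta(K_{\max}^{-})$, where $K_{\max}^{-}$ is the predecessor of $K_{\max}$ in the chain: every $\psi_K$ with $K < K_{\max}$ vanishes under this projection, while $\psi_{K_{\max}}$ induces a nonzero surjection onto $\Delta(K_{\max})/\Delta(K_{\max}^{-})$. Consequently, $\phi$ factors through $\Delta(J_0) \subseteq P(1)$ if and only if $\Delta(K_{\max}) \subseteq \Delta(J_0)$, equivalently $K_{\max} \leq J_0$; by transitivity this is the same as requiring $c_K = 0$ for every $K \not\leq J_0$. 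This identifies $\Hom_{A_n}(\Delta(I_0), \Delta(J_0))$ with the span of $\{\psi_K : K \text{ initial segment of } I_0,\ K \leq J_0\}$, which has the claimed dimension.

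The main obstacle is the structural isomorphism $\Delta(I_0)/\Delta(I_0 \setminus K) \cong \Delta(K)$. Matching $\Delta$-supports and dimensions is immediate, but showing that the quotient has simple socle $L(1)$ requires a careful analysis of how the submodule $\Delta(I_0 \setminus K)$ absorbs the socle of $\Delta(I_0) \subseteq P(1)$, and of how the $\Delta$-filtration of $\Delta(I_0)$ interacts with the lattice of initial-segment submodules; once this is in place, the remainder of the argument is a clean linear algebra computation in the chain of initial segments.
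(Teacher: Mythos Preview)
Your approach is essentially the paper's: both recognize that a nonzero homomorphism $\Delta(I_0)\to\Delta(J_0)$ factors through some $\Delta(K)$ with $K$ an initial segment of $I_0$ satisfying $K\leq J_0$, and count these. The paper's argument is terser---it simply asserts that the $\Delta$-filtered quotients of $\Delta(I_0)$ are the $\Delta(K)$ and that $\Delta(K)\hookrightarrow\Delta(J_0)$ iff $K\leq J_0$---whereas you make the basis and the linear-independence argument explicit via the chain of images and the BGG count $\dim\Hom_{A_n}(\Delta(I_0),P(1))=|I_0|$; the structural isomorphism $\Delta(I_0)/\Delta(I_0\setminus K)\cong\Delta(K)$ that you flag as the main obstacle is exactly what the paper takes for granted when it writes ``factor modules of $\Delta(I)$ which also embed in $\Delta(J)$ \ldots\ are given by initial segments''.
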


\begin{proof} \ For the moment we write $I=I_0$ and $J=J_0$. 
To obtain a homomorphism from $\Delta(I)$ to $\Delta(J)$ 
we need to map a factor module of $\Delta(I)$ to 
a submodule of $\Delta(J)$. Factor modules of $\Delta(I)$ 
which also embed in $\Delta(J)$ must have a $\Delta$-filtration
by Lemma \ref{lm:submodules} and
hence are given by initial segments $I_u$. Now 
$\Delta(I_u)$, $I_u=\{i_{r-u+1}>i_{r-u+2}>\dots>i_r\}$ embeds 
as a submodule in $\Delta(J)$ if and only if $i_{r-u+1}\le j_1$, 
$i_{r-u+2}\le j_{2}$, $\dots, i_r\le j_{u}$. 
\end{proof}

\begin{ex}\label{ex:I-P1}
If $J_0=\{ n,n-1, \ldots, 1\}$ then $\Delta(J_0) =P(1)$ and
all initial segments have the required property and so
the dimension of $\Hom_{A_n}(\Delta(I_0),P(1))$ is $|I_0|$. 
\end{ex}

The signed version of Proposition~\ref{prop:dim-hom} is 
then:
\begin{prop}\label{prop:hom-I-J}
%\label{lm:hom-I-J}
Let $I=\{i_1>\dots>i_s\}$ and 
$J=\{j_1>\dots>j_s\}$ be %alternatingly 
signed subsets of $[n]^{\pm}$. 
The dimension of $\Hom(\Delta(I), \Delta(J))$ 
is equal to the number of initial segments $K$ of $I$ such 
that
$K\leq J$ and such that the sign of $i_{r-u+1}$, the first element 
in $K$, is equal to 
the sign of $j_1$. 
\end{prop}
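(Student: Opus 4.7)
The plan is to extend the proof of Proposition~\ref{prop:dim-hom} from $A_n$ to $D_n$ by tracking the $D_n$-signs carefully. First I would note that any nonzero $D_n$-homomorphism $\phi\colon \Delta(I)\to \Delta(J)$ factors through its image, which is both a quotient of $\Delta(I)$ and a submodule of $\Delta(J)$, and lies in $\doog$ by Lemma~\ref{lm:submodules}. The unsigned argument of Proposition~\ref{prop:dim-hom} identifies such an image (at the level of $A_n$) with an initial segment $K$ of $I_0$ of some length $u$ satisfying $K\le J_0$; so the count in the signed case certainly cannot exceed the number of initial segments with this property.

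The new ingredient is a sign-compatibility condition, and a clean way to access it is to combine Proposition~\ref{prop:ext}(c) with Proposition~\ref{prop:dim-hom}, yielding
\[
\hom_{A_n}(\Delta(I_0),\Delta(J_0))
=\hom_{D_n}(\Delta(I),\Delta(J))+\hom_{D_n}(\Delta(-I),\Delta(J)),
\]
whose left-hand side equals the number of initial segments $K$ of $I_0$ with $K\le J_0$. I would then show that this count splits across the direct sum according to a single sign criterion: the corresponding homomorphism lifts to $\Hom_{D_n}(\Delta(I),\Delta(J))$ precisely when the sign of the first (largest) element $i_{r-u+1}$ of $K$ in $I$ agrees with the sign of $j_1$; otherwise it lifts to $\Hom_{D_n}(\Delta(-I),\Delta(J))$, in which the sign at the top of $K$ is flipped. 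Since these conditions are complementary and together exhaust the unsigned initial segments, adding the two summands reproduces the total from Proposition~\ref{prop:dim-hom}, confirming the formula.

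The hard part will be justifying the sign criterion itself. The image of $\phi$ inherits a signed $\Delta$-filtration in two ways: as a quotient of $\Delta(I)$ it picks up the signs of $I$, while as a submodule of $\Delta(J)$ (sitting inside the appropriate $P(1^\eta)$) it picks up signs coming from $\Delta(J)$, whose top sign is controlled by $j_1$. Matching the top-of-image sign in these two descriptions yields the stated equality. The concrete computation uses the explicit structure of the $P(1^\pm)$ and their signed submodule/quotient lattices recorded in Section~\ref{Dnprops}: the initial-segment submodules of $\Delta(J)$ carry their signs from $J$'s alternating pattern, while the corresponding quotients of $\Delta(I)$ carry signs from $I$, and these two sign patterns agree on the whole image exactly when they agree at its top, i.e., when $\epsilon_{r-u+1}$ equals the sign of $j_1$.
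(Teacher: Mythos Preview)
Your proposal is correct, and the decisive step---matching the sign of the largest element of the initial segment against the sign of $j_1$---is exactly what the paper uses. However, the paper's proof is far more direct than yours: it simply repeats the argument of Proposition~\ref{prop:dim-hom} and observes that for single standard modules there is a homomorphism $\Delta(i_{r-u+1}^{\epsilon})\to\Delta(j_1^{\delta})$ if and only if $\epsilon=\delta$ and $i_{r-u+1}\le j_1$. Since both $I$ and $J$ are alternatingly signed, this one sign equality forces all the remaining signs in the image to line up, so nothing more is needed.

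Your detour through Proposition~\ref{prop:ext}(c) and the splitting
\[
\hom_{A_n}(\Delta(I_0),\Delta(J_0))=\hom_{D_n}(\Delta(I),\Delta(J))+\hom_{D_n}(\Delta(-I),\Delta(J))
\]
is valid and gives a pleasant consistency check, but it is not doing any real work: once you have justified the sign criterion in your final paragraph, you have already established both necessity and sufficiency directly, and the induction/restriction identity becomes redundant. In short, your last paragraph is the whole proof; the middle paragraph can be dropped.
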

\begin{proof}
This follows from Proposition~\ref{prop:dim-hom} and 
the fact that there is a homomorphism $\Delta(i_{r-u+1}^{\eps})$ 
$\to\Delta(j_1^{\delta})$ if and only if $\eps=\delta$ and $i_{r-u+1}\leq
j_1$.
\end{proof}

\begin{ex}\label{ex:I*-P1}
If $i<j$ for all $i\in I_0$ and all $j\in J_0$ and 
$\Delta(J)$ is a submodule of 
$P(1^{\gamma})$ then we have 
\[
\hom(\Delta(I), \Delta(J))
=\hom(\Delta(I), P(1^\gamma))\, .
\]
\end{ex}
%

%%%%%%%%%%%%%%%%%%%%%%%%%%%%%
%
\section{Ext-result with $m$ gaps}\label{s:gaps}
%
%%%%%%%%%%%%%%%%%%%%%%%%%%%%%

In this section we calculate the extension group between $\Delta(I)$ and
$\Delta(J)$ where the underlying unsigned sets for $I$ and $J$ have ``$m$ gaps''.
We also find a $\Delta$-filtered module with no self-extensions that is an 
extension of $\Delta(I)$ by $\Delta(J)$. 
This is the module we will use to build up the $M({\bf d})$ in 
Section~\ref{s:construction}. 
Since we are interested in associating 
such  modules to Richardson orbits we may assume that all the ``gaps''
only occur on one side of $I$ 
and that $I$ and $J$ satisfy a symmetry condition, so that
$I=\Phi(J)$ defined below.

We will continue to use the notation $I_0$ and $J_0$ for the unsigned
versions of the signed subsets $I$ and $J$ of $[n]^{\pm}$.
We now define a map $\Phi$ on both signed and unsigned sets.
Let $I_0$ be a unsigned subset of $[n]$. We define 
$$\Phi(I_0) = \{ n-i+1\mid i \in I_0 \}.$$
We now define $\Phi(I)$ to be $\Phi(I_0)$ with signs chosen so that
the largest element of $\Phi(I)$ has opposite sign to that of the
largest element of $I$.

We now fix a signed subset $I$ of $[n]^\pm$ where %$1 \in I_0$ and where
the sign of the largest element in $I$ is $+$ so that $\Delta(I)$
has simple socle $L(1^+)$.
We set $J = \Phi(I)$ and note that 
the sign on the largest element in $J$ %(which is $n$) 
is $-$.
Let $[n] \setminus I_0 = \{ a_1, a_2, \ldots, a_m\}$ (in decreasing
order) and let
$b_j = n+1 - a_j$ for $1\le j \le m$ so that 
$[n]\setminus J_0 = \{ b_m,b_{m-1}, \ldots, b_1\}$.
%We impose a further condition that $b_1 > a_1$.
%This ensures that all the ``gaps'' in $I$ occur on the right as much
%as they can.
We note that if $i \in I_0 \setminus J_0$ then $n+1-i \in
J_0 \setminus I_0$.  
We impose a further condition that 
if $i \in I_0 \setminus J_0$ then $i \ge \frac{n+1}{2}$.  

We then choose signs  $\epsilon_i$ and $\delta_i$ so that 
$I^c= \{a_1^{\bar{\epsilon}_1}, a_2^{\bar{\epsilon}_2},\ldots,
a_m^{\bar{\epsilon}_m}\}$ and 
$J^c= \{b_m^{\bar{\delta}_m}, b_{m-1}^{\bar{\delta}_{m-1}},\ldots,
b_1^{\bar{\delta}_1}\}$. 
We thus have short exact sequences
$$0 \to \Delta(I) \to Q(1^+) \to \nabla(I^c) \to 0, \ \ \mbox{ and } \  \  
0 \to \Delta(J) \to Q(1^-) \to \nabla(J^c) \to 0$$  
where $Q(1^\epsilon)$ is the injective hull of $L(1^\epsilon)$.
\begin{lemma}\label{lem:extAIJ}
We have $\ext_{A_n}(\Delta(I_0), \Delta(J_0)) = 0 $ and 
$\ext_{A_n}(\Delta(J_0), \Delta(I_0)) =|J_0 \cap I_0^c|$. 
% m needs to be replaced by |I_0 \cap J_0^c|
\end{lemma}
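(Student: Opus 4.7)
The plan is to apply Lemma~\ref{lem:ext1} and reduce to counting initial segments via Proposition~\ref{prop:dim-hom}. By Example~\ref{ex:I-P1}, $\hom_{A_n}(\Delta(X),P(1)) = |X|$ for any subset $X$ of $[n]$, and similarly $\hom_{A_n}(\Delta(X),\nabla(Y^c)) = |X\cap Y^c|$ (the unsigned analogue of the identity discussed in the paragraph after Lemma~\ref{lem:ext1}). Writing $c = |I_0\cap J_0|$, so that $|I_0\cap J_0^c| = |J_0\cap I_0^c| = r-c$ with $r = |I_0| = |J_0|$, the two identities I need reduce to $\hom_{A_n}(\Delta(I_0),\Delta(J_0)) = c$ and $\hom_{A_n}(\Delta(J_0),\Delta(I_0)) = r$.

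First I would analyse the $\Phi$-orbit structure on $[n]$: the hypothesis that $i \geq (n+1)/2$ for every $i \in I_0 \setminus J_0$, combined with $J_0 = \Phi(I_0)$, forces each orbit $\{p,n+1-p\}$ into exactly one of three cases, namely both elements in $I_0$ (hence both in $J_0$), only the larger in $I_0$ with only the smaller in $J_0$ (the ``gap'' case), or neither in $I_0$. A threshold count (elements $\geq t$, pair by pair) then yields $j_l \leq i_l$ for every $l$, so every initial segment $K$ of $J_0$ satisfies $K \leq I_0$. This gives $\hom_{A_n}(\Delta(J_0),\Delta(I_0)) = |J_0| = r$, and Lemma~\ref{lem:ext1} produces $\ext^1_{A_n}(\Delta(J_0),\Delta(I_0)) = r - r + (r-c) = |J_0 \cap I_0^c|$.

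For the first identity I would observe that the set of $u$ with $K_u := \{i_{r-u+1},\ldots,i_r\} \leq J_0$ is downward closed, so the Hom-count equals the largest such $u$, call it $u^*$; the claim is then $u^* = c$. Decomposing $I_0 = C_- \sqcup C_0 \sqcup C_+ \sqcup D$, where $C_- = \{i \in I_0 \cap J_0 : i < (n+1)/2\}$, $C_0 = I_0 \cap \{(n+1)/2\}$, $C_+ = \Phi(C_-)$, and $D = I_0 \setminus J_0 \subseteq ((n+1)/2,n]$, the condition $K_u \leq J_0$ rewrites using $j_l = n+1 - i_{r+1-l}$ as $i_a + i_b \leq n+1$ for all index pairs $a,b \in \{r-u+1,\ldots,r\}$ with $a+b = 2r+1-u$. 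At $u = c$ this unwinds to the inequalities $e_k \leq c^+_k$ for $k = 1,\ldots,|C_+|$, where $e_k$ and $c^+_k$ denote the $k$-th smallest elements of $C_+\cup D$ and of $C_+$ respectively; these hold because $C_+ \subseteq C_+\cup D$. At $u = c+1$ the segment is forced to include the smallest element $e_1$ of $C_+\cup D$, and since $e_1 > (n+1)/2$, the pair containing the position of $e_1$ (either self-paired when $c$ is even, or paired with the middle element $(n+1)/2$ when $c$ is odd) has sum strictly exceeding $n+1$. Hence $u^* = c$ and Lemma~\ref{lem:ext1} yields $\ext^1_{A_n}(\Delta(I_0),\Delta(J_0)) = c - r + (r-c) = 0$.

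The main obstacle I anticipate is the combinatorial identification $u^* = c$: proving $K_c \leq J_0$ requires the genuine pairwise inequalities $e_k \leq c^+_k$ (an averaged bound via summation would not suffice), and the failure at $u = c+1$ splits into two parity cases that must be checked separately. Everything else reduces to elementary bookkeeping once the three-way dichotomy of $\Phi$-orbits and the pair-sum reformulation are in place.
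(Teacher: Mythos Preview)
Your proof is correct and follows the same overall architecture as the paper: apply Lemma~\ref{lem:ext1}, identify the $P(1)$-term and the $\nabla$-term as $|I_0|$ and $|I_0\cap J_0^c|$, and reduce both identities to computing the two Hom dimensions via initial segments (Proposition~\ref{prop:dim-hom}). For $\hom_{A_n}(\Delta(J_0),\Delta(I_0))=|J_0|$ you supply the threshold/orbit argument establishing $j_l\le i_l$; the paper simply asserts this (``by construction, $j_s\le i_s$''), so your version is actually more complete there.

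The one genuine difference is in the count $\hom_{A_n}(\Delta(I_0),\Delta(J_0))=|I_0\cap J_0|$. The paper locates the last admissible initial segment of $I_0$ by introducing the minimal index $l$ with $a_l\in J_0\setminus I_0$, writes the overlap out explicitly, and computes its size by a direct subtraction. You instead exploit the symmetry $j_l=n+1-i_{|I_0|+1-l}$ to rewrite $K_u\le J_0$ as the pair-sum condition $i_p+i_q\le n+1$ over index pairs $p+q=2|I_0|+1-u$; for $u=c$ this reduces to the clean inclusion $C_+\subseteq C_+\cup D$ (giving $e_k\le c^+_k$), and for $u=c+1$ the parity-split self-pair containing $e_1$ fails. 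Both routes are elementary and of comparable length; yours makes the role of $\Phi$ more structural and avoids the somewhat ad~hoc pivot $a_l$, while the paper's version is more hands-on and sets up the notation reused in Section~\ref{s:class-exts}.

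One small warning: you write $r=|I_0|=|J_0|$, but the paper (and the next Lemma~\ref{lem:extBIJ}) reserves $r$ for $|J_0\cap I_0^c|$ and uses $n-m$ for $|I_0|$; you will want to rename before aligning with the paper.
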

\begin{proof}
This is a matter of calculating the right 
hand side in Lemma \ref{lem:ext1}.

Now, $\hom_{A_n}(\Delta(I_0),\Delta(J_0))$ is the number of
overlapping segments. We claim we have $|I_0\cap J_0|$
overlapping segments.
We let $l$ be minimal such that
$a_l \in J_0 \setminus I_0$. The assumptions of $I_0$ and $J_0$ imply
that $a_l \le \frac{n+1}{2}$.
Now the last such overlapping segment is: 
\setcounter{MaxMatrixCols}{15}
$$
\begin{matrix}
\cdots& i_{s-t} &\cdots &  i_{s-1} & i_s & \cdots & i_{n-m} & &  &\\
 &j_1 &\cdots &  j_t & a_l & \cdots & j_{u-1} & j_{u} &\cdots
&j_{n-m}
\end{matrix}
$$
where $s, t, u$ are appropriate integers and 
$i_{s-1}> a_l > i_s$ (note we cannot have equality as $a_l \not \in
I_0$).
It is clear that we cannot get any more overlapping segments
as $i_s > a_1$.
The total number of overlapping segments is thus the amount of overlap
in the above diagram. 
For the calculation, let $r= |J_0\cap I_0^c|$, the number
of gaps in $I_0$ which are not gaps in $J_0$, which is
also equal to $|I_0\cap J_0^c|$. The amount of overlap in 
the above diagram  is equal to:
\begin{align*}
&|\{ i \in I_0 \mid i < a_l \}| +|\{ j \in J_0 \mid j > a_l \}|\\
&= a_l - 1 -\#\mbox{gaps after $a_l$ in $I_0$} + n - a_l -\#\mbox{gaps before
    $a_l$ in $J_0$} \\ 
&= n - 1 - (m -l) - |\{i \in [n] \mid i \not\in I_0 \mbox{ and } i
  \not \in J_0 \mbox{ and } i > a_l \}|
- |\{i \in [n] \mid i \in I_0 \mbox{ and } i \not \in J_0  \mbox{ and } i > a_l\}| \\
&= n - 1 -m +l - (l-1) - r \\
&= n  -m -r 
\end{align*}
(we have used that $I_0\cap J_0$ has only weights $< (n+1)/2$). 
Now $|I_0 \cap J_0| = |I_0| - |I_0\setminus J_0 | = n-m - |I_0 \cap
J_0^c|=  n -m  -r$.
Thus  $\hom_{A_n}(\Delta(I_0),\Delta(J_0))=|I_0 \cap J_0|$.
Now
$\hom_{A_n}(\Delta(I_0),P(1))
= |I_0| = n-m$
and 
\ $\hom_{A_n}(\Delta(I_0),\nabla(J_0^c))
= |I_0 \cap J_0^c| = r.$
Thus 
$$\ext^1_{A_n}(\Delta(I_0),\Delta(J_0))=n-m-r -(n-m) + r=0.$$

To calculate the other $\Ext$ group we consider,
 $\hom_{A_n}(\Delta(J_0),\Delta(I_0))$ which is the number of
 overlapping segments. By construction, $j_s \le i_s$ for all $s$,
 thus $\Delta(J_0)$ in fact embeds in $\Delta(I_0)$ and the number of
 overlapping
segments is $|J_0|= n-m$. 
%The last such overlapping segment is now: 
%$$
%\begin{matrix}
%n & n-1 & \cdots &  b_m+1 & b_m-1 & \cdots & a_m   & a_m-1 & \cdots & 1 \\
%n& n-1  & \cdots &  b_m+1 & b_m & \cdots  & a_m+1 & a_m-1 & \cdots & 1 
%\end{matrix}
%$$
Thus 
$\hom_{A_n}(\Delta(J_0),\Delta(I_0))=n-m.$ 
We also have
$\hom_{A_n}(\Delta(J_0),P(1))
= |J_0| = n-m.
$
Hence 
$\hom_{A_n}(\Delta(J_0),\Delta(I_0))=\hom_{A_n}(\Delta(J_0),P(1))$
and 
$\ext^1_{A_n}(\Delta(J_0),\Delta(I_0))=
\hom_{A_n}(\Delta(J_0),\nabla(I_0^c))$.
Now
$\hom_{A_n}(\Delta(J_0),\nabla(I_0^c))
= |J_0 \cap I_0^c |
= |I_0 \cap J_0^c |$ 
thus 
$$\ext^1_{A_n}(\Delta(J_0),\Delta(I_0))
= |I_0 \cap J_0^c |.\qedhere$$
\end{proof}

We now prove the following signed version of the above lemma.
\begin{lemma}\label{lem:extBIJ}
Let $|J_0 \cap I_0^c| =r$.
We have $\ext_{D_n}^1(\Delta(I), \Delta(J)) = 0 =
\ext_{D_n}^1(\Delta(I), \Delta(-J))$, 
$$\ext_{D_n}^1(\Delta(J), \Delta(I)) = 
\begin{cases}\frac{r}{2} & \mbox{if $r$ even} \\
\frac{r+1}{2} & \mbox{if $r$ odd} 
\end{cases}$$ and
$$\ext^1_{D_n}(\Delta(-J),\Delta(I))=
\begin{cases}\frac{r}{2} & \mbox{if $r$ even} \\
\frac{r-1}{2} & \mbox{if $r$ odd.} 
\end{cases}$$ 
\end{lemma}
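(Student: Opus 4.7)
My approach is to reduce to the unsigned case already handled in Lemma \ref{lem:extAIJ} via the splitting in Proposition \ref{prop:ext}(c), and then to pin down the asymmetric split in the non-vanishing direction by computing one of the two dimensions directly using Lemma \ref{lem:ext1} together with Proposition \ref{prop:hom-I-J}.

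First, I would dispose of the vanishing statements. Proposition \ref{prop:ext}(c) gives
\[
0 = \ext^1_{A_n}(\Delta(I_0), \Delta(J_0)) = \ext^1_{D_n}(\Delta(I), \Delta(J)) + \ext^1_{D_n}(\Delta(-I), \Delta(J)),
\]
and since both summands are non-negative integers, both must vanish. Re-running the same splitting with $J$ replaced by $-J$ yields $\ext^1_{D_n}(\Delta(I), \Delta(-J)) = 0$ as well. For the non-vanishing direction, the same proposition provides
\[
r = \ext^1_{A_n}(\Delta(J_0), \Delta(I_0)) = \ext^1_{D_n}(\Delta(J), \Delta(I)) + \ext^1_{D_n}(\Delta(-J), \Delta(I)),
\]
so it suffices to pin down just one of the two terms on the right, the other being its complement in $r$.

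To compute $\ext^1_{D_n}(\Delta(J), \Delta(I))$, I would apply Lemma \ref{lem:ext1}:
\[
\ext^1_{D_n}(\Delta(J), \Delta(I)) = \hom(\Delta(J), \Delta(I)) - \hom(\Delta(J), P(1^{\epsilon})) + \hom(\Delta(J), \nabla(I^c)),
\]
and then evaluate each hom-dimension by Proposition \ref{prop:hom-I-J}. A crucial simplification is provided by the proof of Lemma \ref{lem:extAIJ}: every initial segment of $J_0$ already satisfies the order condition $K \le I_0$ (this is exactly the fact that $\Delta(J_0)$ embeds in $\Delta(I_0)$), so in the signed setting the \emph{only} further constraint is that the largest element of each initial segment have the correct sign. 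Since $J$ has strictly alternating signs starting with $\delta_1 = -$, each count collapses to a parity count in $\{1, \dots, s\}$ where $s = |J_0| = n - m$. For the third term, the general hom-formula gives $\hom(\Delta(J), \nabla(I^c)) = |J \cap I^c|$ as signed multisets, and the determined signs on $I^c$ (from the defining short exact sequence $0 \to \Delta(I) \to P(1^+) \to \nabla(I^c) \to 0$, which forces $I^c$ to be step-alternating) give a precise count. Combining the three contributions will yield $\lceil r/2 \rceil$, and then $\ext^1_{D_n}(\Delta(-J), \Delta(I)) = r - \lceil r/2 \rceil = \lfloor r/2 \rfloor$ by the splitting above.

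The main obstacle is the careful sign bookkeeping. One must verify how many elements of $J$ carry which sign, and how these signs interact with the step-alternating signs on $I^c$, which depend jointly on the parity of $n$ and on the positions of the $a_j$ and $b_j$ relative to the threshold $(n+1)/2$. The key conceptual observation making the formula clean is that, when $r$ is odd, exactly one of $\Delta(J)$ and $\Delta(-J)$ has socle whose sign agrees (via $\Phi$) with the smallest composition factor of $\Delta(I)$ in a way that produces one additional compatible extension, accounting for the $+1$ in the numerator of $\frac{r+1}{2}$.
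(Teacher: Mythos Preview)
Your strategy is correct and essentially the paper's: both reduce the vanishing direction to Proposition~\ref{prop:ext}(c) applied to $\ext^1_{A_n}=0$, and both reduce the nonvanishing direction to the single number $|J\cap I^c|$, with the complementary value read off from the $A_n$ total. The paper reaches $\ext^1_{D_n}(\Delta(J),\Delta(I))=|J\cap I^c|$ slightly more economically than you propose: rather than computing all three terms in Lemma~\ref{lem:ext1} over $D_n$, it observes that the surjections $\Hom_{D_n}(\Delta(\pm J),\nabla(I^c))\twoheadrightarrow\Ext^1_{D_n}(\Delta(\pm J),\Delta(I))$ from the long exact sequence must both be isomorphisms, since the two sources and the two targets each add up to $r$. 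Your plan to show the first two terms cancel directly also works (both count the elements of $J$ carrying sign $+$), but it is more bookkeeping.

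The genuine gap in your proposal is the sign computation itself, which you correctly flag as ``the main obstacle'' but do not carry out. Your heuristic about socles and $\Phi$ does not match the actual mechanism. What the paper does is pin down $J\cap I^c$ element by element: write $J_0\cap I_0^c=\{a_{k_1}>\cdots>a_{k_r}\}$ and first argue (by comparing the alternating sign on $J$ with the step-alternating sign on $I^c$, using a nearest element of $I_0$ below or above $a_{k_r}$) that $a_{k_r}$ always lies in $J\cap I^c$. Then, for each consecutive pair $a_{k_i},a_{k_{i+1}}$, a parity check on $|a_{k_i}-a_{k_{i+1}}|$ shows their signs agree in exactly one of $I^c$ and $J$, so exactly one of the pair lies in $J\cap I^c$. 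This forces $J\cap I^c=\{a_{k_r},a_{k_{r-2}},a_{k_{r-4}},\ldots\}$, hence $|J\cap I^c|=\lceil r/2\rceil$. Filling in this argument (rather than the socle heuristic) is what completes your proof.
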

\begin{proof}
Since  
$$\ext^1_{D_n}(\Delta(I), \Delta(J))+
\ext^1_{D_n}(\Delta(I), \Delta(-J))  
 = \ext^1_{A_n}(\Delta(I_0), \Delta(J_0))= 0$$
using Proposition \ref{prop:ext}~(c) and
the previous lemma, we have the first part of the lemma.

For the second part, recall from the proof of Lemma 9.1 that
$${\Ext}^1(\Delta(J_0), \Delta(I_0))\cong {\Hom}(\Delta(J_0), \nabla(I_0^c))
$$
Therefore we have, using Lemma 7.3 that
$${\Ext}^1_{D_n}(\Delta(J)\oplus \Delta(-J), \Delta(I))\cong {\Hom}_{D_n}(\Delta(J)\oplus \Delta(-J), \nabla(I^c))
$$
and this has dimension $|I_0\cap J_0^c|=r$.

We have a surjection ${\Hom}(\Delta(J), \nabla(I^c)) \to {\Ext}^1(\Delta(J), \Delta(I))$, 
and a similar surjection for $-J$, 
and by the dimensions these must both be isomorphisms.
So we need to find $|J\cap I^c|$, that is, to consider the signs on the $a_i$ in $J$. We start
by considering $a_{k_r}$.
Note that it must be $< (n+1)/2$.

\ \ \ Assume first 
that there are elements in $I_0$ which are $< a_{k_r}$, let $i$ be the largest such
element. Then in $I$, this $i$ has sign $\epsilon_{k_r}$. Then $i$ is in $J$ (since $i < (n+1)/2$)
and has sign ${\bar{\epsilon}_{k_r}}$ in $J$.  Therefore
$a_{k_r}$ has sign $\epsilon_{k_r}$ in $J$.
Now assume  there is no element in $I_0$ which is $< a_{k_r}$.
Then take $i\in I_0$ to be the smallest element, this is then $> a_{k_r}$
and has sign $\bar{\epsilon}_{k_r}$. Moreover, we must have
that $a_{k_r}$ is the smallest element of $J_0$ and then in $J$ its sign is
$\epsilon_{k_r}$,  and so it again belongs to $J\cap I^c$.

Now if $|a_{k_i}-a_{k_{i+1}}|$ is odd then they have the same
sign in $I^c$ and opposite signs in $J$, thus exactly one of them will
be in $J \cap I^c$. 
If $|a_{k_i}-a_{k_{i+1}}|$ is even then they have the opposite
sign in $I^c$ and the same signs in $J$, thus again,
exactly one of them will
be in $J \cap I^c$. 

Since $a_{k_r}^{\bar{\epsilon}_{k_r}}$ is in $J \cap I^c$ we must have
$J \cap I^c = \{\ldots, a_{{k_{r-4}}}^{\bar{\epsilon}_{{k_{r-4}}}} ,
a_{{k_{r-2}}}^{\bar{\epsilon}_{{k_{r-2}}}} , a_{k_r}^{\bar{\epsilon}_{k_r}} \}$
\ hence
$$\ext^1_{D_n}(\Delta(J),\Delta(I))=
\begin{cases}\frac{r}{2} & \mbox{if $r$ even,} \\
\frac{r+1}{2} & \mbox{if $r$ odd.} 
\end{cases}$$ 

We may now use Proposition \ref{prop:ext}~(c) and the previous
lemma to obtain:
$$\ext^1_{D_n}(\Delta(-J),\Delta(I))=
\begin{cases}\frac{r}{2} & \mbox{if $r$ even,} \\
\frac{r-1}{2} & \mbox{if $r$ odd.} \qedhere
\end{cases}$$ 
\end{proof}

We now  
construct an extension of $\Delta(I)$ by $\Delta(\pm J)$ which has no
self  extensions.
Consider the long exact sequence used to calculate the $\Ext$ group:
$$%\begin{multline*}
0 
\to \Hom(\Delta(\pm J), \Delta(I))
\to \Hom(\Delta(\pm J), Q(1^+))%\\
\to \Hom(\Delta(\pm J), \nabla(I^c))
\to \Ext^1(\Delta(\pm J), \Delta(I))
\to 0\, .
%\end{multline*}
$$
Thus using the definition of the long exact sequence, we must have
that
all extensions of $\Delta(I)$ by $\Delta(\pm J)$ are constructed by
taking the pullback of an appropriate 
map from $\Delta(\pm J)$ to $\nabla(I^c)$.

Of course, in general there will be many non-split extensions of
$\Delta(I)$ by $\Delta(\pm J)$. We will construct an extension of
$\Delta(I)$ by $\Delta(\pm J)$ with no self extensions.

\begin{prop}\label{prop:homcyclic}
%Let $I$ be a subset of $[n]^{\pm}$
%which is alternatingly signed and such that $\Delta(I) \subset
%Q(1^+)$.
Let $I$ be a signed subset of $[n]^{\pm}$
such that $\Delta(I) \subset Q(1^+)$.
We let $J = \Phi(I)$ and impose the further condition that
if $i \in I_0 \setminus J_0$ then $i \ge \frac{n+1}{2}$. 
Then $\Hom_{D_n}(\Delta(J), \nabla(I^c))$ is 
cyclic as a $\Gamma$-module and
$\Hom_{D_n}(\Delta(-J), \nabla(I^c))$ is 
cyclic as a $\Gamma$-module. 
\end{prop}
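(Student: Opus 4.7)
The plan is to exhibit an explicit homomorphism $\varphi_0 \in \Hom_{D_n}(\Delta(J),\nabla(I^c))$ that serves as a $\Gamma$-module generator, and then to check that its orbit under a natural ``shift'' element of $\Gamma$ exhausts the Hom space; the argument for $\Delta(-J)$ will follow by the same recipe, with one fewer iterate in the case $r$ odd.

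First, I would apply $\Hom_{D_n}(\Delta(\pm J), -)$ to the short exact sequence
\[
0 \to \Delta(I) \to Q(1^+) \to \nabla(I^c) \to 0
\]
and use that $Q(1^+)$ is tilting to obtain the four-term exact sequence already exploited in the proof of Lemma~\ref{lem:extBIJ}. Combined with that lemma, this pins down $\dim \Hom_{D_n}(\Delta(\pm J), \nabla(I^c))$ as $\lceil r/2 \rceil$ for $\Delta(J)$ and $\lfloor r/2 \rfloor$ for $\Delta(-J)$, where $r = |J_0 \cap I_0^c|$. Doing this first is essential: having the target dimension in hand converts the task of proving cyclicity into a finite, checkable matching problem.

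Next, I would construct an explicit generator $\varphi_0 : \Delta(J) \to \nabla(I^c)$ using the combinatorics identified in the proof of Lemma~\ref{lem:extBIJ}. That proof shows that $J \cap I^c$ consists of the ``every-other-gap'' labels $a_{k_r}^{\overline{\epsilon}_{k_r}}, a_{k_{r-2}}^{\overline{\epsilon}_{k_{r-2}}}, \ldots$, so the natural choice is to let $\varphi_0$ be the map whose image is supported at the deepest such label $a_{k_r}^{\overline{\epsilon}_{k_r}}$. Because $\Delta(J)$ is the unique submodule of $Q(1^-)$ with $\Delta$-support $J_0$ and $\nabla(I^c)$ is the fixed quotient of $Q(1^+)$ by $\Delta(I)$, such a map is determined up to scalar once the image composition factor is specified; this gives a well-defined $\varphi_0$.

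Finally, I would identify a distinguished element $\xi \in \Gamma$ — acting either by pre-composition on $\Delta(J)$ or by post-composition on $\nabla(I^c)$, depending on the intended meaning of $\Gamma$-module in the paper — whose effect is to shift by two composition layers, so that $\varphi_0, \xi\varphi_0, \xi^2\varphi_0, \ldots$ successively realize the labels $a_{k_{r-2}}^{\overline{\epsilon}_{k_{r-2}}}, a_{k_{r-4}}^{\overline{\epsilon}_{k_{r-4}}}, \ldots$. The two-step jump mirrors the skip pattern of $J \cap I^c$, which is forced by the sign alternation constraints on $I$ and $I^c$. The main obstacle is constructing $\xi$ explicitly from the quiver relations of $D_n$ described in Section~\ref{Dnprops} and verifying that the iterates $\xi^i \varphi_0$ have pairwise distinct supports in $\nabla(I^c)$; once this is done, linear independence is immediate, and the dimension count from the first step forces these $\lceil r/2 \rceil$ iterates to span $\Hom_{D_n}(\Delta(J), \nabla(I^c))$. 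Repeating the argument with $\Delta(-J)$, which sits in $Q(1^+)$ instead of $Q(1^-)$ and whose set $(-J) \cap I^c$ has size $\lfloor r/2 \rfloor$, completes the proposition.
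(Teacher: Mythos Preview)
Your outline has the right architecture---write down a basis $\{\theta_i\}$ of $\Hom_{D_n}(\Delta(J),\nabla(I^c))$ indexed by $J\cap I^c$, pick one as a candidate generator, and show the others lie in its $\Gamma$-orbit---but you have chosen the wrong end of the list as the generator, and this is fatal.

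Here $\Gamma=\End_{D_n}(\nabla(I^c))$ acts by \emph{post}-composition (this is fixed by the paper, cf.\ Corollary~\ref{cor:extcyclic}), so for any $\psi\in\Gamma$ the image of $\psi\circ\varphi_0$ is a quotient of $\mathrm{im}(\varphi_0)$ and in particular has no larger dimension. The map attached to the smallest label $a_{k_r}$ has image contained in the submodule $\nabla(I^c_{\le a_{k_r}})\subset\nabla(I^c)$; the maps attached to larger labels $a_{k_{r-2}},\ldots,a_{k_e}$ have strictly larger images. Hence no element of $\Gamma$ can carry your $\varphi_0$ to any of those other basis maps, and $\Gamma\cdot\varphi_0$ is a proper submodule whenever $r\ge 3$. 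Equivalently, once one knows the Hom space is cyclic, your $\varphi_0$ lies in $\mathfrak m\cdot\Hom$ (with $\mathfrak m=\rad\Gamma$) and cannot be a generator.

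The paper does the opposite: it takes $\theta:=\theta_e$, the map attached to the \emph{largest} label $a_{k_e}$, computes $\ker\theta_i=\Delta\bigl(J_{>a_{k_i}}\cup I_{<a_{k_i}}\bigr)$ for each $i$, and then uses the hypothesis $I_0\setminus J_0\subset[\tfrac{n+1}{2},n]$ to check the set inclusion $(N_e)_0\subset(N_i)_0$, hence $\ker\theta_e\subset\ker\theta_i$. This kernel containment is exactly what forces $\theta_i=\psi_i\circ\theta_e$ for suitable $\psi_i\in\Gamma$, with no need for a single ``shift'' element. If you want to salvage your approach, reverse the direction: start from the map at $a_{k_e}$ and look for a nilpotent $\xi\in\Gamma$ that pushes images down the $\nabla$-filtration; the iterates will then land at $a_{k_{e+2}},a_{k_{e+4}},\ldots,a_{k_r}$ as desired.
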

\begin{proof}
We prove this for the $J$ case, the $-J$ case is similar.
We suppose that $J = \{ j_1^{\alpha_1}, j_2 ^{\alpha_2}, \ldots ,
j_{n-m}^{\alpha_{n-m}} \}$ where $\alpha_i$ is of appropriate sign.
(In fact $\alpha_i = +$ if $i$ is even and $-$ if $i$ is odd.)

In the proof of Lemma~\ref{lem:extBIJ} we showed that
$J\cap I^c= \{a_{k_e}^{\bar{\epsilon_{k_e}}},
 \ldots, a_{k_{r-2}}^{\bar{\epsilon}_{k_r-2}},
  a_{k_r}^{\bar{\epsilon}_{k_r}}\}$ where $e=1$ if $r$ is odd, and $e=2$ otherwise. 
For each $a^{\bar{\epsilon}_{k_i}}_{k_{i}} \in J\cap I^c$
there is a corresponding map $\theta_{i}$ which restricts to the
unique map (up to scalars) $\Delta(a^{\bar{\epsilon}_{k_i}}_{k_{i}}) \to
\nabla(a^{\bar{\epsilon}_{k_i}}_{k_{i}})$ with 
image $L(a^{\bar{\epsilon}_{k_i}}_{k_{i}})$
on the subquotients  $\Delta(a^{\bar{\epsilon}_{k_i}}_{k_{i}})$ 
of $\Delta(J)$ and
$\nabla(a^{\bar{\epsilon}_{k_i}}_{k_{i}})$ of $\nabla(I^c)$.
The $\theta_i$'s in fact form a basis for $\Hom_{D_n}(\Delta(J),
\nabla(I^c))$ as a $k$-vector space.

We will construct such $\theta_i$ in sufficient detail, and then show that the
map $\theta_e$ is a cyclic generator for the hom space as as $\Gamma$ module.

(1) \ \ We start with the construction.  We fix $i$ and write $a:= a_{k_i}$ whose signed version belongs to $J\cap I^c$. 
For any signed set $K$ we write
$$K_{> a} := \{ j^*\in K: j>a\}, \ \ K_{\leq a}: = \{ j^*\in K: j\leq a \}\, .
$$
Then there are short exact sequences
$$0\to \Delta(J_{> a}) \to \Delta(J)\stackrel{\pi}\to  
\Delta(J_{\leq a})\to 0
$$
$$0\to \nabla(I^c_{\leq a}) \stackrel{\kappa}\to \nabla(I^c)\to  
\nabla(I^c_{>a})\to 0\, .
$$
The signed version of $a$ belongs to both $J_{\leq a}$ and to $I^c_{\leq a}$. We will construct a  
homomorphism $\theta_i': \Delta(J_{\leq a}) \to \nabla(I^c_{\leq a})$ 
which comes as before to a non-zero map from 
$\Delta(a_{k_i}^{\bar{\epsilon}_{k_i}})$ to
$\nabla((a_{k_i}^{\bar{\epsilon}_{k_i}})$,
and then take 
$$\theta_i:= \kappa \circ \theta_i'\circ \pi.
$$
We have inclusions
$$ \Delta(I_{<a}) \ \subset \Delta(J_{\leq a})\subset T(a^*)
$$
(with $* = \bar{\epsilon}_{k_i})$. To see the first inclusion, note that 
(with $\leq$  the partial order as defined before 8.4) we have $I_{\leq a} \leq J_{\leq a}$. 

Furthermore, $T(a*)/ \Delta(I_{< a})$ is isomorphic to 
$\nabla(I^c_{\leq a})$ (which for example one can see 
working with the factor algebra $D_a$ of $D_n$, for which $T(a^*)$ is $Q(1^*)$, ie is a projective-injective module).
Therefore we take for $\theta_i'$ the composition of 
$$\Delta(J_{\leq a}) \to \Delta(J_{\leq a})/\Delta(I_{<a})  \to T(a^*)/\Delta(I_{<a}) 
\cong \nabla(I^c_{\leq a})$$
where the first map is the canonical surjection, and the second map is the inclusion.
(Each of the modules in this construction has $a^*$ as the unique highest weight with multiplicity one, and
the map is non-zero on a vector of this weight so this is a map as required).

Then the  kernel of $\theta_i$ has the exact sequence 
$$0\to \Delta(J_{>a})\to {\rm Ker}(\theta_i)\to \Delta(I_{<a})\to 0
$$
Furthermore, 
the kernel is a submodule of $\Delta(J)$ and therefore it has a simple socle. This means

(2) \  ${\rm ker}(\theta_i) = \Delta(N_i)$ where $N_i = J_{>a} \cup I_{<a}$ for
$a= a_i=  a_{k_i}^{\bar{\epsilon}_{k_i}}$.

Now let $\theta:= \theta_e$, then we claim that the kernel of $\theta_e$ is contained in the kernel
of $\theta_i$ for all $i$: 
\  We use (2) for $a=a_e$ and also for $a=a_i$. 
The sets $N_e$ and $N_i$ are both appropriately signed. So to see
that $\Delta(N_e)\subseteq \Delta(N_i)$ we only need that
the unsigned set $(N_e)_0$ is contained in the unsigned set $(N_i)_0$. 
To show this, we  only need the following. If $j\in I_0$ and $a_e > j \geq a_i$ then $j\in J_0$. But this
holds by the general hypothesis, since we have $a_e < (n+1)/2$.

(3) \ We can now prove the cyclicity, that is, to show that $\theta_i = \psi \circ \theta_e$ for some $\psi  \in \Gamma$.

Since ${\rm ker}(\theta_e)\subseteq {\rm ker}(\theta_i)$ it follows that $\theta_i$ maps the kernel of $\theta_e$ to zero.
So there is a homomorphism $\psi: \nabla(I^c) \to \nabla(I^c)$ with $\theta_i = \psi\circ \theta_e$. 
\end{proof}

\begin{cor}\label{cor:extcyclic}
The module $\Ext^1_{D_n}(\Delta(J), \Delta(I))$ 
is cyclic as a 
module for $\Gamma= \End(\nabla(I^c))$. 
\end{cor}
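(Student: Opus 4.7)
The plan is to deduce this immediately from Proposition \ref{prop:homcyclic} by combining it with the long exact sequence
$$\Hom_{D_n}(\Delta(J), Q(1^+)) \to \Hom_{D_n}(\Delta(J), \nabla(I^c)) \xrightarrow{\delta} \Ext^1_{D_n}(\Delta(J), \Delta(I)) \to \Ext^1_{D_n}(\Delta(J), Q(1^+))$$
obtained by applying $\Hom_{D_n}(\Delta(J), -)$ to the short exact sequence $0 \to \Delta(I) \to Q(1^+) \to \nabla(I^c) \to 0$. Since $Q(1^+)$ is a tilting module, the rightmost term vanishes, so $\delta$ is surjective.

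The key observation is that this surjection is $\Gamma$-equivariant, where $\Gamma = \End(\nabla(I^c))$ acts on $\Hom_{D_n}(\Delta(J), \nabla(I^c))$ by post-composition and acts on $\Ext^1_{D_n}(\Delta(J), \Delta(I))$ via $\delta$. To check that the action indeed descends to the cokernel, note that the image of $\Hom_{D_n}(\Delta(J), Q(1^+))$ consists of maps of the form $\pi \circ g$, where $\pi : Q(1^+) \to \nabla(I^c)$ is the canonical projection. Given $\psi \in \Gamma$, projectivity of $Q(1^+)$ (recall $Q(1^+)$ equals $P(1^+)$ or $P(1^-)$) produces a lift $\tilde{\psi} \in \End(Q(1^+))$ with $\pi \circ \tilde{\psi} = \psi \circ \pi$; therefore $\psi \cdot (\pi \circ g) = \pi \circ (\tilde{\psi} \circ g)$ again lies in the image, so the $\Gamma$-invariance of the kernel of $\delta$ is guaranteed.

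By Proposition \ref{prop:homcyclic} the domain $\Hom_{D_n}(\Delta(J), \nabla(I^c))$ is cyclic as a $\Gamma$-module, generated by the map $\theta_e$ constructed there; since a $\Gamma$-linear quotient of a cyclic module is cyclic, $\Ext^1_{D_n}(\Delta(J), \Delta(I))$ is cyclic as a $\Gamma$-module, generated by the extension class $\delta(\theta_e)$. The only mildly delicate point is the $\Gamma$-equivariance of the connecting map, which is settled by the lifting argument above using the projectivity of $Q(1^+)$; beyond that, Proposition \ref{prop:homcyclic} does all the work, and the argument is essentially formal.
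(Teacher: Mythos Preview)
Your proof is correct and follows essentially the same approach as the paper: both use the long exact sequence from $0\to\Delta(I)\to Q(1^+)\to\nabla(I^c)\to 0$ to exhibit $\Ext^1_{D_n}(\Delta(J),\Delta(I))$ as a $\Gamma$-equivariant quotient of $\Hom_{D_n}(\Delta(J),\nabla(I^c))$, and then invoke Proposition~\ref{prop:homcyclic}. The paper phrases the quotient via the stable Hom space $\uHom_{D_n}(\Delta(J),\nabla(I^c))$ and notes that in this particular case the connecting map is in fact an isomorphism (cf.\ the dimension count in Lemma~\ref{lem:extBIJ}); your surjectivity argument is sufficient, and your explicit justification of $\Gamma$-equivariance via lifting along the projective $Q(1^+)$ is a welcome clarification of a point the paper only asserts.
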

\begin{proof}
Using the defining sequence for $\nabla(I^c)$ we see that for 
any $D_n$-module $M$ that
$\Ext^1_{D_n}(M, \Delta(I)) \cong \uHom_{D_n}(M,\nabla(I^c))$ 
where $\uHom$ denotes the $\Hom$ space modulo homomorphisms that 
factor through a projective module. 
Thus as $\Ext^1_{D_n}(\Delta(J), \Delta(I))$ is isomorphic to 
the cyclic $\Gamma$-module  $\Hom_{D_n}(\Delta(J),\nabla(I^c))$, 
via the induced homomorphism from the long exact
sequence and this morphism is compatible with the action of $\Gamma$,
it itself must be cyclic.
\end{proof}

The following general lemma from homological algebra is well-known.
\begin{lemma}
Assume $0\to A\stackrel{j}\to B\stackrel{\pi}\to C\to 0$ is a short
exact sequence of 
finite-dimensional modules, and 
let $\xi\in {\rm Ext}^1(C,A)$ represent this sequence. Let also
$\pi^*: \Ext^1(C,A)\to \Ext^1(B,A)$ be the map induced by $\pi$. Then 
$\pi^*(\xi) = 0$. 
\end{lemma}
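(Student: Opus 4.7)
The plan is to compute the pullback $\pi^{*}(\xi)$ explicitly and exhibit an obvious splitting. Recall that for $\xi \in \Ext^1(C,A)$ represented by $0 \to A \xrightarrow{j} B \xrightarrow{\pi} C \to 0$, the class $\pi^{*}(\xi) \in \Ext^1(B,A)$ is represented by the top row of the pullback diagram
\[
\xymatrix{
0 \ar[r] & A \ar[r] \ar@{=}[d] & E \ar[r] \ar[d] & B \ar[r] \ar[d]^{\pi} & 0 \\
0 \ar[r] & A \ar[r]^{j} & B \ar[r]^{\pi} & C \ar[r] & 0
}
\]
where $E := B \times_{C} B = \{ (b_1,b_2) \in B \oplus B \mid \pi(b_1) = \pi(b_2) \}$, with the left map given by $a \mapsto (j(a), 0)$ and the right map by projection to the second factor.

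First I would write down this pullback concretely and verify the two obvious facts: that the left-hand column is indeed a kernel inclusion (because $\pi \circ j = 0$) and that projection onto the second factor is surjective with kernel exactly the image of $a \mapsto (j(a),0)$. Both verifications are immediate from the construction of the fibre product.

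Next I would exhibit a splitting of the pulled-back sequence. The diagonal map $s: B \to E$ given by $s(b) = (b,b)$ is well-defined since $\pi(b) = \pi(b)$, and the composition of $s$ with projection onto the second factor is $\id_B$. Hence the pullback sequence splits, which means $\pi^{*}(\xi) = 0$ in $\Ext^1(B,A)$.

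There is no real obstacle here; the statement is a standard fact about Yoneda extensions and only requires unwinding the definition of the induced map on $\Ext^1$ via pullback. The only thing to be slightly careful about is the sign/direction convention for $\pi^{*}$ (induced by the contravariant variable), but this is fixed by the standard convention that $\pi^{*}(\xi)$ is the pullback along $\pi$, which is exactly the computation above.
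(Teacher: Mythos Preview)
Your argument is correct and is the standard Yoneda-extension verification: the pullback of $\xi$ along $\pi$ is the fibre product $B\times_C B$, and the diagonal $b\mapsto (b,b)$ splits it. The paper itself does not give a proof of this lemma at all; it simply records the statement as ``well-known'' and moves on, so your write-up in fact supplies more than the paper does.
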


We now assume that 
$I$ and $J$ are (alternatingly) signed subsets of $[n]^{\pm}$
as in the beginning of this section.
I.e. $|I|=n-m$, 
$J = \Phi(I)$ and the smallest element of $J^c$ is larger that $I^c$.

Let $\xi$ be a generator of 
$\Ext^1_{D_n}(\Delta(\pm J), \Delta(I))$ as a $\Gamma$-module.
Now $\xi$ is the image of some map $\theta: 
\Delta(\pm J) \to \nabla(I^c)$ from the long exact sequence.
Thus the extension $\xi$ represents may be taken as the pullback  of
this map $\theta$.

We let $E(I,\pm J)$  be the extension $\xi$. I.e. it denotes 
the module with short exact sequence:
$$0 \to \Delta(I) \to E(I,\pm J) \to \Delta(\pm J) \to 0$$
constructed by taking the pullback of $\theta$.
We claim the following:
\begin{prop}\label{prop:E}
The module $E(I,\pm J)$ has no self-extensions.
That is, $\Ext^1_{D_n}(E(I,\pm J),E(I,\pm J)) = 0$.
\end{prop}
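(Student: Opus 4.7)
My plan is to reduce $\Ext^1_{D_n}(E,E)=0$, where $E = E(I,\pm J)$, to the vanishing of two more accessible Ext groups via the long exact sequences induced by the defining sequence
$$0 \to \Delta(I) \to E \to \Delta(\pm J) \to 0.$$
First, note that $E$, as an extension of $\Delta$-filtered modules, itself lies in $\mathcal{F}(\Delta)$, so Lemma~\ref{lem:pdone} yields $\pd(E) \le 1$ and $\Ext^2$'s out of $E$ vanish. I would then apply $\Hom_{D_n}(E,-)$ to the defining sequence to obtain
$$\Ext^1(E,\Delta(I)) \to \Ext^1(E,E) \to \Ext^1(E, \Delta(\pm J)),$$
reducing the problem to showing that the two flanking terms vanish.

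The right-hand vanishing is routine: applying $\Hom(-, \Delta(\pm J))$ to the defining sequence gives
$$\Ext^1(\Delta(\pm J), \Delta(\pm J)) \to \Ext^1(E, \Delta(\pm J)) \to \Ext^1(\Delta(I), \Delta(\pm J)),$$
in which the left term is zero by Proposition~\ref{prop:I-in-J} (standard modules have no self-extensions) and the right term is zero by Lemma~\ref{lem:extBIJ}.

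The substantive step will be $\Ext^1(E, \Delta(I)) = 0$. Applying $\Hom(-, \Delta(I))$ to the defining sequence produces
$$\End(\Delta(I)) \xrightarrow{\;\delta\;} \Ext^1(\Delta(\pm J), \Delta(I)) \to \Ext^1(E, \Delta(I)) \to \Ext^1(\Delta(I),\Delta(I)) = 0,$$
where $\delta(f) = f_*\xi$ is the pushforward of the class $\xi$ of the defining extension, so the task becomes establishing surjectivity of $\delta$. By Corollary~\ref{cor:extcyclic}, $\Ext^1(\Delta(\pm J), \Delta(I))$ is cyclic on $\xi$ over $\Gamma = \End(\nabla(I^c))$, with $\Gamma$ acting (via the isomorphism $\Ext^1(\Delta(\pm J), \Delta(I)) \cong \uHom(\Delta(\pm J), \nabla(I^c))$) by post-composition on $\nabla(I^c)$. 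The idea is to show this $\Gamma$-action is already realised by pushforward through $\End(\Delta(I))$: given $\gamma \in \Gamma$, projectivity of $Q(1^+)$ allows me to lift $\gamma \circ \pi$ (where $\pi: Q(1^+) \twoheadrightarrow \nabla(I^c)$) to some $\tilde\gamma \in \End(Q(1^+))$; since $\tilde\gamma$ must send $\ker\pi = \Delta(I)$ into itself, it restricts to $\gamma' \in \End(\Delta(I))$, and a Yoneda diagram chase identifies $\gamma \cdot \xi$ with $\gamma'_* \xi = \delta(\gamma')$. Combined with the cyclicity, this will force the image of $\delta$ to exhaust $\Ext^1(\Delta(\pm J), \Delta(I))$, completing the argument. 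The hardest part is precisely this last identification: translating the target-side $\Gamma$-cyclicity of Corollary~\ref{cor:extcyclic} into the source-side pushforward form in which $\delta$ naturally takes values.
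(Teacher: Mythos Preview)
Your argument is correct, and the overall shape---reducing to $\Ext^1(E,\Delta(I))=0$ and $\Ext^1(E,\Delta(\pm J))=0$ via the long exact sequences---matches the paper exactly. The difference lies in how the key step $\Ext^1(E,\Delta(I))=0$ is handled. You focus on the connecting map $\delta:\End(\Delta(I))\to\Ext^1(\Delta(\pm J),\Delta(I))$, $f\mapsto f_*\xi$, and prove it surjective by lifting each $\gamma\in\Gamma$ through the projective-injective $Q(1^+)$ to produce $\gamma'\in\End(\Delta(I))$ with $\gamma'_*\xi=\gamma\cdot\xi$; combined with $\Gamma$-cyclicity this does the job, but it needs the lifting construction and the naturality diagram chase you flag as the hardest part. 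The paper instead looks one step further along the same long exact sequence, at the map $\pi^*:\Ext^1(\Delta(\pm J),\Delta(I))\to\Ext^1(E,\Delta(I))$ induced by $\pi:E\twoheadrightarrow\Delta(\pm J)$. Under the identification $\Ext^1(-,\Delta(I))\cong\uHom(-,\nabla(I^c))$ this $\pi^*$ is \emph{precomposition} with $\pi$, hence automatically $\Gamma$-linear (as $\Gamma$ acts by postcomposition on the target $\nabla(I^c)$); and the general fact that the pullback of an extension along its own projection vanishes gives $\pi^*(\xi)=0$. Since $\xi$ generates over $\Gamma$, $\pi^*=0$, and together with $\Ext^1(\Delta(I),\Delta(I))=0$ this yields the result immediately. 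So the paper's route sidesteps the lifting/identification step you singled out as delicate; your approach buys nothing extra but is a perfectly sound dual argument.
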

\begin{proof}
We prove this for the case where $E(I,J)$ is an extension of $\Delta(I)$ by
$\Delta(J$), the $\Delta(-J)$ case follows similarly.

It is clear that $\ext^1_{D_n}(E(I,J), \Delta(J))=0$, 
since both $\ext^1_{D_n}(\Delta(I),\Delta(J))$ 
and $\ext^1_{D_n}(\Delta(J), \Delta(J))$ are zero. 
So it is enough to show that $\ext^1_{D_n}(E(I,J),\Delta(I))=0$.

The map $\theta$ is chosen so that its image $\xi$ is a generator for 
$\Ext^1_{D_n}(\Delta(I), \Delta(J))$ as a module for 
$\End_{D_n}(\nabla(I^c))$. 

Apply $\Hom_{D_n}(-, \Delta(I))$ to the short exact sequence
defining $E(I,J)$, this gives
$$\ldots \to \Ext^1_{D_n}(\Delta(J), \Delta(I))\stackrel{\pi^*}
\to \Ext^1_{D_n}(E(I,J), \Delta(I))\to 
\Ext^1_{D_n}(\Delta(I), \Delta(I))=0\, .
$$
We can view these $\Ext$ groups as 
$\End_{D_n}(\nabla(I^c))=\Gamma$-modules as in the proof
of corollary \ref{cor:extcyclic}.
The map $\pi^*$ is a homomorphism of 
$\Gamma$-modules. 
It takes $\xi$ to zero. Thus as 
$\Ext_{D_n}(\Delta(J), \Delta(I))$ is cyclic as module for $\Gamma$
with generator $\xi$ 
it follows that $\pi^*=0$ and that $\Ext^1_{D_n}(E(I,J),\Delta(I))=0$.
\end{proof}

%%%%%%%%%%%%%%%%%%%%%%%%%%
%
\section{A characterisation of $E(I,\pm J)$}\label{s:class-exts}
%
%%%%%%%%%%%%%%%%%%%%%%%%%

We will split  the modules $M\in \doog$ as follows.
\begin{enumerate}
\item
We say that $M\in \doog$ is a type I module if it 
is a direct sum of modules of
the form $\Delta(I_j)$ for some %alternatingly 
signed sets $I_j$.
\item
We say that $M\in\doog$ is a type II module if it is not a direct sum 
of modules of
the form $\Delta(I_j)$ for some %alternatingly 
signed sets $I_j$.
\end{enumerate}
The indecomposable 
type I modules are already classified. Potentially there may 
be indecomposable type II modules with $L(1\pm)$ occuring more
than twice in their socles. 
To classify the Richardson orbits however, we will only need
indecomposable
type II modules with at most two simples in their socles.

The extension $E(I, \pm J)$ from the previous section
is constructed as the pullback of a map $\theta$ as 
in the previous section. Thus we have the following pullback diagram:
$$
\xymatrix@R=15pt@C=15pt{
  & & 0  \ar@{->}[d] & 0 \ar@{->}[d] &
\\
  & & \ker{\tilde{\theta}} \ar@{=}[r] \ar@{->}[d] &
\ker{\theta} \ar@{->}[d] & 
\\
0 \ar@{->}[r]  &  \Delta(I) \ar@{->}[r]_{\textstyle{j}}
\ar@{=}[d]%_{\textstyle{Id}}
& E(I,\pm J) \ar@{->}[r]_{\textstyle{q}}\ar@{->}[d]_{\textstyle{\tilde{\theta}}} &
\Delta(\pm J)
\ar@{->}[r] \ar@{->}[d]_{\textstyle{\theta}} & 0 
\\
0 \ar@{->}[r]  &  \Delta(I) \ar@{->}[r] 
& Q(1^+) \ar@{->}[r]^{\textstyle{\pi}} & 
\nabla(I^c)  \ar@{->}[r] & 0\smash{.}}
$$
where
$E(I,\pm J) = \{ (a,b) \in Q(1^+) \oplus \Delta(\pm J) \mid \theta(b)=\pi(a)\}$
(\cite{maclane}). Note that this implies that $\ker \theta$ is a
submodule of $E(I,\pm J)$. 

We continue with the case $\Delta(J)$, which is the one of interest for
the application, and we  use the notation as in 
\ref{prop:homcyclic} (the other case is similar).
We have seen there that 
$\ker \theta = \Delta(\tilde{J}), \ \ \tilde{J}:= N_e = J_{> a_e} \cup
I_{< a_e}.
$
We can also identify the cokernel of $\theta$ from the construction
in \ref{prop:homcyclic}, it is $\nabla(I^c_{>a}\cup J^c_{<a})$. 
It is isomorphic to the cokernel of $\tilde{\theta}$, and hence
the 
$\Im \tilde{\theta} = \Delta(\tilde{I}), \ \ 
\tilde{I} = I_{>a} \cup J_{<a}.
$
Thus $E(I, J)$ also has a short exact sequence
$$0 \to \Delta(\tilde{J}) \to E(I,J) \to \Delta(\tilde{I}) \to 0.$$
Sometimes this sequence will split and $E(I,J)$ will decompose.
There are cases, however, where this sequence does not split and
$E(I,J)$ is in fact indecomposable. This extension $E(I,J)$ is then \emph{not}
of type I. 
This is in contrast to the results of
\cite{bhrr} where all $\Delta$-filtered modules were of this type.
%If $\theta$ is injective, then $E(I,J) = \Delta(\tilde{I})$. 
%?? I think $\theta$ cannot be injective, this would mean $J_{>a_e}=\emptyset$
%which is not so.
We now want to show that $E(I,J)$ is in  half of the cases
indecomposable, i.e. that it really is type II. \\
%
%We have the pullback diagram
%$$
%\xymatrix@R=15pt@C=15pt{
%0 \ar@{->}[r]  &  \Delta(I) \ar@{->}[r]
%\ar@{=}[d]%_{\textstyle{Id}}
%%& E(I,J) \ar@{->}[r] \ar@{->}[d]_{\textstyle{\tilde{\theta}}} &
%\Delta(\pm J)
%\ar@{->}[r] \ar@{->}[d]_{\textstyle{\theta}} & 0 
%\\
%0 \ar@{->}[r]  &  \Delta(I) \ar@{->}[r] 
%& Q(1^+) \ar@{->}[r]^{\textstyle{p}} & 
%\nabla(I^c)  \ar@{->}[r] & 0\smash{.}}
%$$
%
%
%
Recall that 
$J\cap I^c= \{a_{k_e}^{\bar{\epsilon_{k_e}}},
 \ldots, a_{k_{r-2}}^{\bar{\epsilon}_{k_r-2}},
  a_{k_r}^{\bar{\epsilon}_{k_r}}\}$ 
where $e=1$ if $r$ is odd and $e=2$ if
$r$ is even.

It is clear that 
$\tilde{I}_0 = 
\{ i \in I_0 \mid i > a_{k_e} \} \cup \{ j \in J_0 \mid j < a_{k_e} \}$
and
$\tilde{J}_0 = 
\{ j \in J_0 \mid j > a_{k_e} \} \cup \{ i \in I_0 \mid i < a_{k_e} \}$
Thus
$\tilde{J}_0= J_0 \cap I_0$ if 
$a_{k_1}^{\bar{\epsilon}} \in J \cap I^c$, i.e. if $e=1$. 
Otherwise $\tilde{J}_0 \setminus \tilde{I}_0 = \{ a_{k_1}\}$.

\begin{lemma}
$\Ext^1_{A_n}(\Delta(\tilde{I}_0), \Delta(\tilde{J}_0)) =0$ if and
only if $\tilde{I}_0 = I_0 \cup J_0$ and $\tilde{J_0} = I_0 \cap J_0$
\end{lemma}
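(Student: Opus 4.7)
I would prove the biconditional by establishing the explicit formula
$$
\ext^1_{A_n}(\Delta(\tilde I_0),\Delta(\tilde J_0)) = |\tilde J_0 \setminus \tilde I_0|.
$$
From the explicit descriptions of $\tilde I_0$ and $\tilde J_0$, combined with the gap hypothesis that every element of $I_0\setminus J_0$ sits above $a_{k_e}$ (and its $\Phi$-symmetric counterpart for $J_0\setminus I_0$), one verifies that $\tilde I_0 \cup \tilde J_0 = I_0 \cup J_0$ and $\tilde I_0 \cap \tilde J_0 = I_0 \cap J_0$; in particular $|\tilde I_0|+|\tilde J_0|=|I_0|+|J_0|$. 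Consequently the two displayed equalities are equivalent to one another and each is equivalent to $\tilde J_0 \subseteq \tilde I_0$, which by the discussion preceding the lemma holds precisely when $e=1$ (in the case $e=2$ one has $\tilde J_0 \setminus \tilde I_0 = \{a_{k_1}\}$). So once the displayed formula above is established, both directions of the lemma follow immediately.

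Next, combining Lemma~\ref{lem:ext1} with Example~\ref{ex:I-P1} and the identity $\hom_{A_n}(\Delta(\tilde I_0),\nabla(\tilde J_0^c)) = |\tilde I_0 \cap \tilde J_0^c|$ yields
$$
\ext^1_{A_n}(\Delta(\tilde I_0),\Delta(\tilde J_0)) = \hom_{A_n}(\Delta(\tilde I_0),\Delta(\tilde J_0)) - |\tilde I_0 \cap \tilde J_0| = \hom_{A_n}(\Delta(\tilde I_0),\Delta(\tilde J_0)) - |I_0 \cap J_0|,
$$
so the task reduces to the identity $\hom_{A_n}(\Delta(\tilde I_0),\Delta(\tilde J_0)) = |\tilde J_0|$: granted this, $\ext^1 = |\tilde J_0| - |I_0 \cap J_0| = |\tilde J_0 \setminus \tilde I_0|$.

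The substantive step is this $\hom$-count. By Proposition~\ref{prop:dim-hom} the dimension equals the number of initial segments $K$ of $\tilde I_0$ satisfying $K \le \tilde J_0$; since this property is inherited on passing from an initial segment of size $s$ to the one of size $s-1$, the count equals the largest $s \le |\tilde J_0|$ for which the $s$ smallest elements of $\tilde I_0$ are pointwise $\le$ the $s$ largest elements of $\tilde J_0$. Rephrasing this pointwise condition in counting form, one needs the dominance inequality
$$
|\{j \in \tilde J_0 : j<v\}| \le |\{i \in \tilde I_0 : i<v\}| \qquad \text{for every } v.
$$
I would verify this by splitting on whether $v \le a_{k_e}$ or $v > a_{k_e}$. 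In the first range, the gap hypothesis gives $I_0 \cap \{i<a_{k_e}\} = I_0 \cap J_0 \cap \{i<a_{k_e}\}$, so the inequality reduces to the trivial $|I_0\cap J_0 \cap \{i<v\}| \le |J_0 \cap \{i<v\}|$. In the second range, the element $a_{k_e}$ itself (which lies in $\tilde I_0$ but not in $\tilde J_0$) together with the inclusion $I_0 \cap \{i<a_{k_e}\} \subseteq J_0 \cap \{i<a_{k_e}\}$ absorbs the at most one element of $J_0 \setminus I_0$ that can lie strictly above $a_{k_e}$, namely $a_{k_1}$ in the case $e=2$. This case analysis is the only delicate ingredient and the main obstacle: the arithmetic is short but requires careful bookkeeping of which parts of $I_0$ and $J_0$ sit above or below $a_{k_e}$, and the gap hypothesis is critical since without it an element of $I_0 \setminus J_0$ could drop below $a_{k_e}$ and destroy the inequality. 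Combining the three steps gives the displayed formula, and hence the lemma.
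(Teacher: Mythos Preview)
Your argument is correct and follows essentially the same route as the paper: both reduce to Lemma~\ref{lem:ext1}, compute $\hom_{A_n}(\Delta(\tilde I_0),\Delta(\tilde J_0))=|\tilde J_0|$ via the initial-segment criterion of Proposition~\ref{prop:dim-hom}, and then read off $\ext^1$. The paper argues the $\hom$-count by drawing the ``last overlapping segment'' picture and obtains $\ext^1=0$ for $e=1$ and $\ext^1=1$ for $e=2$ directly; your packaging as $\ext^1=|\tilde J_0\setminus\tilde I_0|$ together with the dominance inequality is a cleaner and more systematic way of checking the same thing, and your observation that $\tilde I_0\cup\tilde J_0=I_0\cup J_0$ and $\tilde I_0\cap\tilde J_0=I_0\cap J_0$ (hence the two equalities in the statement collapse to $\tilde J_0\subseteq\tilde I_0$) is a nice simplification the paper leaves implicit.
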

\begin{proof}
Clearly, if $\tilde{I}_0 = I_0 \cup J_0$ and $\tilde{J} = I_0 \cap J_0$
then $\Ext^1_{D_n}(\Delta(\tilde{I}), \Delta(\tilde{J})) =0$ 
by Proposition \ref{prop:I-in-J}. 
To prove the converse, we actually calculate the $\Ext$ group
directly.
Now,
$\Hom_{A_n}(\Delta(\tilde{I}_0), \Delta(\tilde{J}_0))$ can be calculated 
in a similar fashion to $\Hom_{A_n}(\Delta(I_0), \Delta(J_0))$.
The last overlapping segment is now 
\setcounter{MaxMatrixCols}{15}
$$
\begin{matrix}
\cdots &  j_t & a_l & \cdots &\cdots   &\cdots  &\cdots  
& j_{u-1} & j_{u} &\cdots &j_{n-m}\\
\cdots &\cdots & \cdots &\cdots &  i_{s-1} & i_s  &\cdots 
&\cdots   &\cdots  & \cdots & i_{n-m} 
\end{matrix}
$$
with the same indices as in the proof of \ref{lem:extAIJ}.
Thus 
$\hom_{A_n}(\Delta(\tilde{I}_0), \Delta(\tilde{J}_0)) = |\tilde{J}_0|$
which is $n-m-r$ if $e=1$ and $n-m-r+1$ if $e=2$.

We have
$\hom_{A_n}(\Delta(\tilde{I}_0), P(1)) = |\tilde{I}_0|$
which is $n-m+r$ if $e=1$ and $n-m+r-1$ if $e=2$.

Also 
$\hom_{A_n}(\Delta(\tilde{I}_0), \nabla(\tilde{J}_0^c)) =
|\tilde{I}_0\cap \tilde{J}^c_0|$
which is $2r$ if $e=1$ and $2r-1$ if $e=2$.
This is as
$$\tilde{I}_0\cap \tilde{J}^c_0=
\{b_{k_r},b_{k_{r-1}},
 \ldots, 
  b_{k_1},
a_{k_e}, a_{k_{e+1}},
 \ldots, 
  a_{k_r}\}\, .
$$

Thus
$\ext^1_{A_n}(\Delta(\tilde{I}_0), \Delta(\tilde{J}_0))$ 
is $0$ if $e=1$ and $1$ if $e=2$.
\end{proof}

\begin{lemma}
$\Ext^1_{D_n}(\Delta(\tilde{I}), \Delta(\tilde{J})) =0$ if and
only if $\tilde{I}_0 = I_0 \cup J_0$ and $\tilde{J_0} = I_0 \cap J_0$.
\end{lemma}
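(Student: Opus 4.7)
The plan is to prove the two directions separately, using the signed/unsigned bridge from Proposition \ref{prop:ext}(c) together with the previous lemma.

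The ``if'' direction is immediate: if $\tilde{J}_0 = I_0 \cap J_0$ and $\tilde{I}_0 = I_0 \cup J_0$, then $\tilde{J}_0 \subseteq \tilde{I}_0$, so Proposition \ref{prop:I-in-J} gives $\ext^1_{D_n}(\Delta(\tilde{I}), \Delta(\tilde{J})) = 0$.

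For the ``only if'' direction, I would start from the identity
$$\ext^1_{A_n}(\Delta(\tilde{I}_0), \Delta(\tilde{J}_0)) = \ext^1_{D_n}(\Delta(\tilde{I}), \Delta(\tilde{J})) + \ext^1_{D_n}(\Delta(-\tilde{I}), \Delta(\tilde{J}))$$
coming from Proposition \ref{prop:ext}(c). The previous (unsigned) lemma evaluates the left side to $0$ when $e=1$ (which is precisely the case $\tilde{I}_0 = I_0 \cup J_0$, $\tilde{J}_0 = I_0 \cap J_0$) and to $1$ when $e = 2$. In the first case both summands vanish and the claimed ``only if'' holds trivially. The real content is the second case: I must show that when $e=2$, it is $\ext^1_{D_n}(\Delta(\tilde{I}), \Delta(\tilde{J}))$ itself (and not the twisted summand $\ext^1_{D_n}(\Delta(-\tilde{I}), \Delta(\tilde{J}))$) that contributes the $1$.

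To pin down which summand is nonzero, I would apply Lemma \ref{lem:ext1} in its signed form,
$$\ext^1_{D_n}(\Delta(\tilde{I}), \Delta(\tilde{J})) = \hom(\Delta(\tilde{I}), \Delta(\tilde{J})) - \hom(\Delta(\tilde{I}), P(1^\eps)) + \hom(\Delta(\tilde{I}), \nabla(\tilde{J}^c)),$$
and compute each term via the signed initial-segment criterion of Proposition \ref{prop:hom-I-J}. The arithmetic follows exactly the pattern of the previous lemma, with each ``overlapping segment'' counted only when the sign of its leading element matches the sign of the socle of the target. The signs on $\tilde{I}$ and $\tilde{J}$ are inherited from those of $I$ and $J$ in the split at $a_{k_e}$, and the signs on $J\cap I^c$ worked out in the proof of Lemma \ref{lem:extBIJ} determine the sign on $a_{k_1}$ in $\tilde{J}\setminus\tilde{I}$ when $e=2$. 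Together with the parity of $n$, this pins down the sign $\eps$ of the socle of $\Delta(\tilde{J})$ and hence which of the two signed extension groups receives the extra segment.

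The main obstacle is bookkeeping of signs. Once the sign on $a_{k_1}$ in $\tilde{J}$ is explicitly read off from the construction of $J$ and $J^c$ in the opening of Section \ref{s:gaps}, each of the three hom counts differs from its $A_n$ counterpart only by dropping those initial segments whose leading sign is wrong. A direct count then shows the three contributions add to $1$ exactly for $\Delta(\tilde{I})$ and to $0$ for $\Delta(-\tilde{I})$, completing the converse.
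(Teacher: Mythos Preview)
Your proposal is correct and follows essentially the same route as the paper. The paper also handles the ``if'' direction via Proposition~\ref{prop:I-in-J}, and for the $e=2$ case computes $\ext^1_{D_n}(\Delta(\tilde I),\Delta(\tilde J))$ directly from the signed formula of Lemma~\ref{lem:ext1}: it determines the signs on the smallest elements of $\tilde I$ and $\tilde J$ from the parities of $|\tilde I|=n-m+r-1$ and $|\tilde J|=n-m-r+1$ (which agree), finds $\hom_{D_n}(\Delta(\tilde I),\Delta(\tilde J))=\lfloor\frac{n-m-r+1}{2}\rfloor$, $\hom_{D_n}(\Delta(\tilde I),P(1^-))=\lfloor\frac{n-m+r-1}{2}\rfloor$, and $|\tilde I\cap\tilde J^c|=r$, giving $\ext^1=1$. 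Your initial detour through Proposition~\ref{prop:ext}(c) to split the $A_n$-ext into two $D_n$-summands is logically sound but unnecessary, since you end up computing $\ext^1_{D_n}(\Delta(\tilde I),\Delta(\tilde J))$ itself anyway; the paper simply does that computation from the start.
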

\begin{proof}
We need only consider the $e=2$ case using Proposition
\ref{prop:I-in-J}.

We calculate 
$\Hom_{D_n}(\Delta(\tilde{I}), \Delta(\tilde{J}))$.
We need to determine the signs on the 
last overlapping signed segment in the previous proof. 
The sign on the last element of $\tilde{I}$ is $-$
if $|\tilde{I}|$ is even and $+$ if 
$|\tilde{I}|$ is odd.
The sign on the last element of $\tilde{J}$ is $+$
if $|\tilde{J}|$ is even and $-$ if 
$|\tilde{J}|$ is odd. 

We also know that the parities of $|\tilde{J}|$ and
$|\tilde{I}|$ are equal. Thus the signs on the last two
elements of $\tilde{I}$ and $\tilde{J}$ are always opposite.
Hence 
$\hom_{D_n}(\Delta(\tilde{I}), \Delta(\tilde{J}))$
is $\lfloor \frac{n-m-r+1}{2} \rfloor$.

Now 
$\hom_{D_n}(\Delta(\tilde{I}), P(1^-)) = 
\lfloor  \frac{n-m+r-1}{2} \rfloor$.
Also 
$\hom_{D_n}(\Delta(\tilde{I}), \nabla(\tilde{J}^c)) =
|\tilde{I}\cap \tilde{J}^c|$.
$$
\tilde{I}\cap \tilde{J}^c=
\{ b_{k_r}, \ldots, b_{k_4}, b_{k_2},
a_{k_e}, \ldots,a_{k_{r-2}}, a_{k_r}\}
$$
with appropriate signs 
by a similar argument to the one in the proof
of lemma \ref{lem:extBIJ}.
 Thus 
$\hom_{D_n}(\Delta(\tilde{I}), \nabla(\tilde{J}^c)) = r$
and
$\ext^1_{D_n}(\Delta(\tilde{I}), \Delta(\tilde{J}))
= \lfloor \frac{n-m-r+1}{2} \rfloor
- \lfloor \frac{n-m+r-1}{2} \rfloor + r = 1$.
\end{proof}

We collect information on a possible direct sum decomposition of
$E(I,\pm J)$.  
We take $E(I,J)$ in the form
$E(I,J)= \{ (x,y)\in Q(1^+)\oplus \Delta(J) \mid \pi(x)=\theta(y)\}$, the
case with $-J$ is similar.

Suppose $E(I,J)$ is a direct sum. The
socle of $E(I,J)$ is contained in $\soc(\Delta(I)\oplus \Delta(J))$, and it
follows that each summand has a simple socle $L(1\pm)$ and their 
socles are not isomorphic.
Moreover, the summands have $\Delta$-filtration, so
$E(I,J)=\Delta(L_1)\oplus \Delta(L_2)$ for some signed sets $L_1, L_2$ such that
$L_1\cup L_2 = I\cup J$.  

It follows then that $\Delta(I)$ is isomorphic to a submodule of
one of $\Delta(L_1)$ or $\Delta(L_2)$. 
Consider  the inclusion  $j: \Delta(I)\to E(I,J)$, it is 1-1 and the socle of
$\Delta(I)$ is simple. Let  $p_i: E(I,J)\to \Delta(L_i)$
be the projection onto the summand $\Delta(L_i)$
then one of $p_i\circ j$ must be non-zero on the socle
of $\Delta(I)$ and if so then $p_i\circ j$ is 1-1. 
We pick our indices so that 
$\Delta(I)$ is a submodule of $\Delta(L_1)$,
then $\Delta(L_2)$ is a submodule of 
$\Delta(J)$, as the map $q$ must be 1-1 restricted to the socle
of $\Delta(L_2)$. 

Similarly, using the other short exact sequence for $E(I,J)$, 
$\Delta(\tilde{J})$ is a submodule of one of the summands
$\Delta(L_1)$ or $\Delta(L_2)$ of $E(I,J)$. By matching the socles, we
see that $\Delta(\tilde{J}) \subset \Delta(L_2)$ and 
$\Delta(L_1) \subset \Delta(\tilde{I})$.

%\begin{remark} 
%We have only used information on the socles, so much of this would
%apply more generally. 
%\end{remark}
%

We remark that in the special case that $\theta$ is injective, then 
$E(I,J)\cong \Delta(\tilde{I})$ and hence is indecomposable. 
%In this case, $\nabla(I^c)$ is a tilting module and $J_0\subset I_0^c$. 
When $\theta$ is not injective, we have the following theorem describing 
exactly when $E(I,J)$ decomposes. 

\begin{theorem}\label{thm:Edecomp}
Assuming that $\theta$ is not injective, then
the following are equivalent.
\begin{enumerate}
\item[(i)] $E(I,J)$ is decomposable;
\item[(ii)] $E(I,J) = \Delta(L_1) \oplus \Delta(L_2)$ for some %alternatingly 
   signed subsets, $L_1$ and $L_2$;
\item[(iii)] $E(I,J) = \Delta(\tilde{I}) \oplus \Delta(\tilde{J})$;
\item[(iv)] $r$ is odd.
\end{enumerate}
\end{theorem}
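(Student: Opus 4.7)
The implications $\mathrm{(iii)} \Rightarrow \mathrm{(ii)} \Rightarrow \mathrm{(i)}$ are immediate. For $\mathrm{(iv)} \Rightarrow \mathrm{(iii)}$, when $r$ is odd we have $e = 1$, so by the second lemma preceding the theorem $\Ext^1_{D_n}(\Delta(\tilde I), \Delta(\tilde J)) = 0$; hence the short exact sequence $0 \to \Delta(\tilde J) \to E(I,J) \to \Delta(\tilde I) \to 0$ splits, yielding $E(I,J) \cong \Delta(\tilde I) \oplus \Delta(\tilde J)$.

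For $\mathrm{(i)} \Rightarrow \mathrm{(iv)}$ I would argue the contrapositive: assuming $r$ is even, show $E(I,J)$ is indecomposable. First I establish $\soc E(I,J) = L(1^+) \oplus L(1^-)$, using that the pullback embeds $E(I,J)$ into $Q(1^+) \oplus \Delta(J)$, and that both $\Delta(I)$ and the kernel $\Delta(\tilde J)$ inject into $E(I,J)$ (the latter is nonzero precisely because $\theta$ is not injective). Consequently, any nontrivial direct-sum decomposition has at most two summands, each with simple socle $L(1^+)$ or $L(1^-)$; Lemmas~\ref{lm:socle} and~\ref{lm:indecs} then force every such summand to be some $\Delta(L_i)$, so any decomposition reads $E(I,J) \cong \Delta(L_1) \oplus \Delta(L_2)$ with $\soc \Delta(L_1) = L(1^+)$ and $\soc \Delta(L_2) = L(1^-)$.

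The key step is to force $\{L_1, L_2\} = \{\tilde I, \tilde J\}$. Krull--Schmidt applied to the $\Delta$-support gives $L_1 \cup L_2 = \tilde I \cup \tilde J$ as signed multisets. Using essential-socle arguments on the inclusions $\Delta(\tilde J) \hookrightarrow E(I,J)$ and $\Delta(I) \hookrightarrow E(I,J)$, together with the vanishing $\Ext^1_{D_n}(\Delta(L_1), \Delta(L_2)) = 0$ supplied by Proposition~\ref{prop:E} and the $\Hom$-dimension formula of Proposition~\ref{prop:hom-I-J}, one rules out all candidate decompositions other than $\{\tilde I, \tilde J\}$. Once this is done, $E(I,J) \cong \Delta(\tilde I) \oplus \Delta(\tilde J)$, so the second SES splits and $\Ext^1_{D_n}(E(I,J), E(I,J)) \supseteq \Ext^1_{D_n}(\Delta(\tilde I), \Delta(\tilde J)) \neq 0$ (by the lemma preceding the theorem, since $r$ is even), contradicting Proposition~\ref{prop:E}.

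The main obstacle is precisely the identification $\{L_1, L_2\} = \{\tilde I, \tilde J\}$. The essential-socle reasoning does not immediately produce clean inclusions of $\Delta$-modules, since in general a submodule of $\Delta(L)$ need not be a $\Delta$-module, and a quotient of $\Delta(I)$ can have socle of the opposite sign. To push through, I would combine the combinatorics of initial segments from Section~\ref{s:exts} with the $\Ext$-vanishing constraints coming from Proposition~\ref{prop:E} to eliminate all competing partitions of the signed multiset $\tilde I \cup \tilde J$.
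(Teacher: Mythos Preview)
Your skeleton matches the paper's: the easy implications and $(iv)\Rightarrow(iii)$ are handled the same way, and the real work is the identification $\{L_1,L_2\}=\{\tilde I,\tilde J\}$, after which the contradiction with $\Ext^1_{D_n}(E(I,J),E(I,J))=0$ for even $r$ finishes things. But you explicitly leave this identification as an obstacle, and your proposed fix (``eliminate all competing partitions of the signed multiset $\tilde I\cup\tilde J$'') is not a proof. One side remark: your worry that ``a submodule of $\Delta(L)$ need not be a $\Delta$-module'' is unfounded here, since $\mathcal F(\Delta)$ is closed under submodules and $\Delta(L)$ has simple socle; so every nonzero submodule of $\Delta(L)$ is again of the form $\Delta(K)$. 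This is precisely why the socle arguments in the discussion before the theorem give clean chains $\Delta(\tilde J)\subseteq \Delta(L_2)\subseteq \Delta(J)$ and $\Delta(I)\subseteq \Delta(L_1)\subseteq \Delta(\tilde I)$.

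The paper closes the gap not by combinatorially ruling out partitions but by a short Hom computation that forces $\Delta(L_2)\subseteq\ker\theta$. From $\Ext^1_{D_n}(E(I,J),E(I,J))=0$ one gets $\Ext^1_{D_n}(\Delta(L_2),\Delta(I))=0$, hence the long exact sequence yields
\[
0\to \Hom_{D_n}(\Delta(L_2),\Delta(I))\to \Hom_{D_n}(\Delta(L_2),Q(1^+))\to \Hom_{D_n}(\Delta(L_2),\nabla(I^c))\to 0.
\]
Now $\Delta(L_2)\subseteq\Delta(J)$ and, as $A_n$-modules, $\Delta(J_0)$ embeds in $\Delta(I_0)$; so $\Delta((L_2)_0)$ embeds in $\Delta(I_0)$ and the signed Hom-count gives $\hom_{D_n}(\Delta(L_2),\Delta(I))=\lfloor |L_2|/2\rfloor=\hom_{D_n}(\Delta(L_2),Q(1^+))$. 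Thus $\Hom_{D_n}(\Delta(L_2),\nabla(I^c))=0$, so the restriction of $\theta$ to $\Delta(L_2)$ vanishes, i.e.\ $\Delta(L_2)\subseteq\ker\theta=\Delta(\tilde J)$. Combined with $\Delta(\tilde J)\subseteq\Delta(L_2)$ this gives $L_2=\tilde J$, and then $L_1=\tilde I$ by comparing $\Delta$-supports. This is the missing idea in your argument.
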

\begin{proof}
Clearly (iii) $\Rightarrow$ (ii) and 
(i) $\Leftrightarrow$ (ii) by the discussion about the socle of $E(I,J)$.

Also (iv) $\Rightarrow$ (iii) as then 
$\Ext^1_{D_n}( \Delta(\tilde{I}), \Delta(\tilde{J}))=0$ 
and the sequence for $E(I,J)$ splits.

Next (iii) $\Rightarrow$ (iv). If (iv) is not true then
$\Ext^1_{D_n}( \Delta(\tilde{I}), \Delta(\tilde{J}))=k$ and
then 
$\Ext^1_{D_n}( E(I,J), E(I,J))=
\Ext^1_{D_n}( \Delta(\tilde{I}), \Delta(\tilde{I}))
\oplus \Ext^1_{D_n}( \Delta(\tilde{I}), \Delta(\tilde{J}))
\oplus \Ext^1_{D_n}( \Delta(\tilde{J}), \Delta(\tilde{I}))
\oplus \Ext^1_{D_n}( \Delta(\tilde{J}), \Delta(\tilde{J})) 
\ne 0$
contradicting that $E(I,J)$ has no self extensions. 

Thus it remains to prove that (ii) $\Rightarrow$ (iii).
Recall that
$\Delta(\tilde{J}) \subset \Delta(L_2) \subset \Delta(J)$ and 
$ \Delta(I) \subset \Delta(L_1) \subset \Delta(\tilde{I})$ by the
discussion
preceeding this proof. We will now show that 
$\Delta(L_2) \subset \ker \theta \cong \Delta(\tilde{J})$. 
Now  $E(I,J) \cong \Delta(L_1) \oplus \Delta(L_2)$, and
$\Ext^1_{D_n}(E(I,J),\Delta(I)) =0$ we must 
have $\Ext^1_{D_n}(\Delta(L_2), \Delta(I)) =0$.
Hence we have a short exact sequence
$$
0 \to \Hom_{D_n}( \Delta(L_2), \Delta(I))
 \to \Hom_{D_n}( \Delta(L_2), P(1^+))
 \to \Hom_{D_n}( \Delta(L_2), \nabla(I^c)) \to 0.
$$
Now $\Delta(L_2) \subset  \Delta(J)$.
As a $A_n$-module $\Delta(J)$ embeds in $\Delta(I)$
thus so does $\Delta(L_2)$. 
Hence $$\hom_{D_n}( \Delta(L_2), \Delta(I))
=\Bigl\lfloor \frac{|L_2|}{2} \Bigr\rfloor.$$
Also, 
$$\hom_{D_n}( \Delta(L_2), P(1^+))
=\Bigl\lfloor \frac{|L_2|}{2} \Bigr\rfloor.$$
Hence by dimensions
$\Hom_{D_n}( \Delta(L_2), \nabla(I^c))=0$.
Since there are no homomorphisms from $\Delta(L_2)$ to
 $\nabla(I^c)$ and $\Delta(L_2)$ is a submodule of
$\Delta(J)$, $\Delta(L_2)$ must be contained in the kernel
of $\theta$. Thus $\Delta(L_2) \subset \ker \theta \cong
\Delta(\tilde{J})$. As $\Delta(\tilde{J}) \subset \Delta(L_2)$, by
dimensions we must have $\Delta(\tilde{J}) \cong \Delta(L_2)$. 
Thus $\tilde{J}=L_2$.
Since $L_1 = I\cup J \setminus L_2= \tilde{I}$ as a multiset, 
we also have $\Delta(L_1) \cong \Delta(\tilde{I})$.
\end{proof}

\begin{ex} %type II modules
Let 
$I=\{8^+,7^-,6^+,5^-,4^+,1^-\}$ and
$J=\{8^-,5^+,4^-,3^+,2^-,1^+\}$. This is an example with $m=2$ gaps.
We have $I^c=\{3^-,2^-\}$ 
and $J^c=\{7^+,6^+\}$. If we take $\theta:\Delta(J) \to \nabla(I^c)$
which has image $\nabla(2^-)$,
then the pullback of $\theta$, the extension $E(I,J)$ has short 
exact sequence:
$$0 \to \Delta(\tilde{J}) \to E(I,J) \to \Delta(\tilde{I}) \to 0$$
where $\tilde{I}=\{8^+,7^-,6^+,5^-,4^+,2^-,1^+\}$ and
$\tilde{J}=\{8^-,5^+,4^-,3^+,1^-\}$. 
This sequence is non-split, if it did split then $L(3^+)$, which is
in the head of $\Delta(\tilde{J})$ would be in the head of $E(I,J)$. But 
$L(3^+)$ is not in the head of either $\Delta(I)$ nor $\Delta(J)$ and
so it cannot be in the head of $E(I,J)$.
Thus $E(I,J)$ is an example of a type II extension.
\end{ex}

%%%%%%%%%%%%%%%%%%%%%%%%%%%%%
%
\section{Constructing $M=M({\bf d})$ with 
$\Ext^1_{D_n}(M,M)=0$}\label{s:construction}
%
%%%%%%%%%%%%%%%%%%%%%%%%%%%%%

Let ${\bf d}=(d_1,\dots,d_n)$ be a symmetric dimension vector (i.e. 
$d_i=d_{n+1-i}$). Let %$\sum d_i=2N$. 
$\sum d_i=N$. 
Then $P=P({\bf d})$ is a parabolic subgroup of %$\SO_{2N}$. 
$\SO_{N}$. 
The goal of this section is to construct a 
$\Delta$-filtered module $M$. 
In Theorem~\ref{thm:rich-module} we show that the $\Delta$-dimension vector of 
$M$ when restricted to $A_n$ is ${\bf d}$ and that $\Ext^1_{D_n}(M,M)=0$. 

We remark that in general, there are infinitely many 
(isomorphism classes of) $A_n$-modules 
having $\Delta$-support ${\bf d}$. 
In particular, the representation type 
of $\doog$ for $A_n$ is wild for $n\ge 6$, \cite[Proposition 7.2]{dr}. 

%In~\cite{hr} it is shown that a parabolic subgroup $P=P({\bf d})$ of 
%$G=\SO_{2N}$ 
%has a finite number of orbits in its nilradical if 
%and only if $n\le 5$ or $n=6$ and ${\bf d}$ is of the form $(a,b,c,c,b,a)$ 
%with $c\ge 2$ and $a=1$ or $b=1$ and that $P({\bf d})\subset \SO_{2N+1}$ 
%has finitely many orbits in its nilradical if and only if $n\le 5$. 
%So apart from the dense Richardson orbit, $P$ in general has 
%an infinite number of smaller-dimensional orbits. 

If we have an arbitrary parabolic subgroup $P=P({\bf d})$ in %$\SO_{2N}$ 
$\SO_N$ we want 
to associate to it a module $M=M({\bf d})$ 
for $D_n$ which has no self-extensions. 

In order to construct such a module $M$ without self-extension, 
we use the knowledge of Richardson orbits for parabolic 
subgroups of %$\SO_{2N}$. 
$\SO_N$. 
Constructions of Richardson elements 
for parabolic subgroups of the classical groups have been given 
in~\cite{ba} (under certain restrictions) 
and in~\cite{bg} for all parabolic subgroups of 
$\SO_{2N}$, $\SO_{2N+1}$ and $\Sp_{2N}$ (the
symplectic group). 
Our construction here is similar to the one in 
Definition 3.1 in~\cite{bg}.

The idea is to start with a symmetric dimension vector ${\bf d}$ 
and construct from it a finite sequence of dimension vectors ${\bf e}^k$ 
%($k=1,2,\dots$) 
($k=1,2,\dots$ if $N$ is even, $k\ge 0$ if $N$ is odd) 
such that the sum of the ${\bf e}^k$ is equal 
to ${\bf d}$. 
Then to each ${\bf e}^k$ we associate a $\Delta$-filtered module 
$M({\bf e}^k)$ which has no self-extensions. 
%The modules 
%$M({\bf e}^k)$ are either type I modules - in this case the direct
%sum of two modules $\Delta(I)$ and $\Delta(J)$ 
%for some subsets $I$ and $J$ or indecomposable 
%type II modules as in the Section~\ref{s:class-exts}.
The modules 
$M({\bf e}^k)$, $k\ge 1$, are either type I modules - in this case the direct
sum of two modules $\Delta(I)$ and $\Delta(J)$ 
for some subsets $I$ and $J$ or indecomposable 
type II modules as in the Section~\ref{s:class-exts}. 
The module $M({\bf e}^0)$, if present, is $P(1^+)$.

In a third step, we add all the $M({\bf e}^k)$ and show that the sum 
$M({\bf d}):=\oplus_{k} M({\bf e}^k)$ has the desired property, i.e. 
(i) that the 
$\Delta$-dimension vector of $M({\bf d})$ is ${\bf d}$ 
and (ii) that $M({\bf d})$ has no self-extensions. The precise 
definition is given in Definition~\ref{defn:M(d)}.

Let ${\bf d}=(d_1,\dots,d_n)$ be a symmetric dimension vector. 
The algorithm to obtain a module with $\Delta$-support ${\bf d}$ 
is the following. 
For $k=1,2,\dots, m$ (where $m=m({\bf d})\in\mathbb{N}$ is roughly half 
of the maximal entry of ${\bf d}$) 
we define dimension vectors 
${\bf f}^k$, ${\bf g}^k$, ${\bf e}^k$ using ${\bf d}^k$. 
To start we let $k=0$ and set ${\bf d}^{0}={\bf d}$. \\
\noindent
\begin{itemize}
\item[(0)] If $\sum d_i=N$ is odd, let ${\bf e}^0=(e^0_1,\dots,e^0_n)$ 
$=(1,1,\dots,1)$ and replace ${\bf d}^0$ by ${\bf d}^0-{\bf e}^0$. 
\item[(1)] Assume that ${\bf d}^0$, ${\bf d}^1$, $\dots, {\bf d}^k$ 
are defined. 
\item[(2)]
Define ${\bf f}^{k+1}=((f^{k+1})_1,\dots,(f^{k+1})_n)$ and 
${\bf g}^{k+1}=((g^{k+1})_1,\dots,(g^{k+1})_n)$ 
by setting 

$(f^{k+1})_i:=\left\{ 
\begin{array}{ll} 
1 & \mbox{if } ({\bf{d}}^k)_i\ge2 \\
1 & \mbox{if } ({\bf{d}}^k)_i=1 \mbox{ and } i<\frac{n+1}{2} \\
0 & \mbox{else;}
\end{array}\right.$ 

$(g^{k+1})_i:=\left\{
\begin{array}{ll} 
1 & \mbox{if } ({\bf{d}}^k)_i\ge2 \\
1 & \mbox{if } ({\bf{d}}^k)_i=1 \mbox{ and } i>\frac{n+1}{2} \\
0 & \mbox{else.}
\end{array}\right.$

\item[(3)]
Let ${\bf e}^{k+1}=((e^{k+1})_1,\dots,(e^{k+1})_n)$ where 
$(e^{k+1})_i:=(f^{k+1})_i+(g^{k+1})_i$. And then 
set ${\bf d}^{k+1}:={\bf d}^k-{\bf e}^{k+1}$. 
If ${\bf d}^{k+1}= (0,0,\dots,0)$ we are done. Otherwise, 
we continue this procedure by going back to step (1).  
\end{itemize}

This gives a sequence of dimension vectors ${\bf e}^k$, $k\ge 1$, with entries at
most $2$ 
and such that $\sum {\bf e}^k={\bf d}$ (coordinate-wise sum). If 
$N$ is odd, there is in addition 
a dimension vector ${\bf e}^0$ consisting only of $1$'s. 
Also, we obtain two decreasing sequences $I_0^k$, $J_0^k$ of subsets of $[n]$ 
as follows: 

\noindent
For $k\ge 1$ define $I_0^k$ and $J_0^k$ 
to be the support of ${\bf f}^k$ 
and of ${\bf g}^k$ respectively i.e. 
$I_0^k:=\{i\mid (f^k)_i\neq 0\}$ and let 
$J_0^k:=\{i\mid (g^k)_i\neq 0\}$, always in decreasing order. 
I.e. if the first entry of ${\bf f}^k$ (of ${\bf g}^k$) is non-zero, $I_0^k$ 
($J_0^k$, respectively) contains $n$. If the second entry 
of ${\bf f}^k$ is non-zero, $I_0^k$ contains $n-1$, etc.  
For odd $N$ set $I_0^0=[n]=\{n,n-1,\dots,1\}$. 

\begin{remark}\label{rem:I_k-inclusions}
Note that we have $I_0^k\supset I_0^{k+1}$  
and $J_0^k\supset J_0^{k+1}$ for $k=1,2,\dots$, and 
if $N$ is odd, $I_0^0\supset I_0^1$, $I_0^0\supset J_0^1$. 
Furthermore, the support of ${\bf d}$ (i.e. the set of indices of the nonzero 
entries) is equal to $I_0^1\cup J_0^1$ if $N$ is even and 
equal to $I_0^0$ if $N$ is odd.
\end{remark}

\begin{lemma}\label{lem:subsets}
We have $I_0^k\subset J_0^{k-1}$  and
$J_0^k\subset I_0^{k-1}$ for $k\ge 2$.  
\end{lemma}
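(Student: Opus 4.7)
The plan is to reduce the two claimed inclusions to a single one by exploiting the built-in symmetry of the construction, and then prove that single inclusion by a short case analysis using the recursion ${\bf d}^{k-1} = {\bf d}^{k-2} - {\bf f}^{k-1} - {\bf g}^{k-1}$ together with the defining conditions for ${\bf f}^{k-1}$ and ${\bf g}^{k-1}$.

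First I would observe that the algorithm preserves symmetry of the dimension vector. Directly from step (2), the condition on $({\bf d}^{k-1})_i$ defining $(f^k)_i = 1$ coincides with the one defining $(g^k)_{n+1-i} = 1$ whenever ${\bf d}^{k-1}$ is symmetric, hence ${\bf e}^k = {\bf f}^k + {\bf g}^k$ is symmetric and, by induction, each ${\bf d}^k$ remains symmetric. The same identity shows that $J_0^k = \Phi(I_0^k)$ with $\Phi$ as in Section~\ref{s:gaps}, which makes the two inclusions $I_0^k \subset J_0^{k-1}$ and $J_0^k \subset I_0^{k-1}$ equivalent under $\Phi$. It therefore suffices to prove $I_0^k \subset J_0^{k-1}$.

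For this I would establish the stronger statement that $i \in I_0^k$ forces $({\bf d}^{k-2})_i \geq 2$; since $({\bf d}^{k-2})_i \geq 2$ immediately triggers $(g^{k-1})_i = 1$ by the rule for ${\bf g}^{k-1}$, this gives $i \in J_0^{k-1}$. Assume $(f^k)_i = 1$. By the definition of ${\bf f}^k$, either (A) $({\bf d}^{k-1})_i \geq 2$, or (B) $({\bf d}^{k-1})_i = 1$ with $i < (n+1)/2$. In case (A), if $({\bf d}^{k-2})_i$ were $0$ or $1$, the rules for ${\bf f}^{k-1}, {\bf g}^{k-1}$ applied to a coordinate of value $\leq 1$ force $(e^{k-1})_i \leq 1$ and hence $({\bf d}^{k-1})_i \leq 1$, contradicting (A). In case (B): $({\bf d}^{k-2})_i = 0$ would yield $({\bf d}^{k-1})_i = 0$, and $({\bf d}^{k-2})_i = 1$ combined with $i < (n+1)/2$ would trigger $(f^{k-1})_i = 1$, giving $({\bf d}^{k-1})_i \leq 0$; both contradict (B).

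The main subtlety I anticipate, and which I would treat explicitly, is the middle index $i = (n+1)/2$ in the $n$ odd case, where the alternative $i < (n+1)/2$ used in case (B) fails. There one only needs to verify that case (A) alone forces $({\bf d}^{k-2})_{(n+1)/2} \geq 2$: if instead $({\bf d}^{k-2})_{(n+1)/2} \in \{0,1\}$, then neither of the two sign-sensitive conditions in step (2) is triggered, so $(e^{k-1})_{(n+1)/2} = 0$, giving $({\bf d}^{k-1})_{(n+1)/2} \leq 1$ and contradicting (A). With this small additional check the case analysis goes through uniformly, and nothing else beyond bookkeeping is required.
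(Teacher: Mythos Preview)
Your proof is correct and essentially the same as the paper's: both split on the value of $({\bf d}^{k-1})_i$ (the paper phrases this split as $i\in I_0^k\cap J_0^k$ versus $i\in I_0^k\setminus J_0^k$, which are exactly your cases (A) and (B)) and trace back through the recursion to force $({\bf d}^{k-2})_i\geq 2$. The only cosmetic differences are that the paper handles case~(A) by invoking the already-noted inclusion $J_0^k\subset J_0^{k-1}$ rather than re-deriving it, and obtains the second inclusion by ``similarly'' rather than by your explicit symmetry reduction via~$\Phi$.
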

\begin{proof}
Consider the vectors ${\bf f}^k$ and ${\bf g}^k$. By definition
$({\bf f}^k)_i = ({\bf g}^k)_i$ unless $(d^k)_i = 1$.
Thus  $I_0^k \cap J_0^k = \{ i \mid ({\bf f}^k)_i = ({\bf g}^k)_i
=1\}$.  Also, by construction, $({\bf f}^k)_i \ne 0$
implies that $({\bf f}^{k-1})_i \ne 0$. (This is why
$I_0^k \subset I_0^{k-1}$ and 
$J_0^k \subset J_0^{k-1}$.) 
Thus if $i \in I_0^k \cap J_0^k$ then $i$ is in both
$I_0^{k-1}$ and $J_0^{k-1}$. 
 
If $i \in I_0^k \setminus J_0^k$ then $({\bf f}^k)_i =1$
and $({\bf g}^k)_i =0$. Thus $({\bf d}^k)_i =1$ and 
$({\bf d}^{k-1})_i =3$. Hence 
$({\bf f}^{k-1})_i = ({\bf g}^{k-1})_i$ and 
$i$ is in both 
$I_0^{k-1}$ and $J_0^{k-1}$. 

Hence $I_0^k \subset J_0^{k-1}$. Similarly $J_0^k \subset I_0^{k-1}$.
\end{proof}

For each of the $I_0^k$, $J_0^k$, $k\ge 1$ 
we let $I^k$ and $J^k$ be signed versions 
such that $\Delta(I^k)\subset Q(1^+)$ and $\Delta(J^k)\subset Q(1^-)$. 
In particular, the largest entry of $I^k$ has a positive sign 
and the largest entry of $J^k$ has negative sign. 
If $N$ is odd, let $I^0$ be the signed subset such that 
$\Delta(I^0)=P(1^+)$. 

\begin{definition}\label{defn:M(d)}
Let ${\bf d}$ be a symmetric dimension vector and ${\bf e}^k$ 
the vectors as defined above. The module $M({\bf d})$ is then 
defined as follows: 
In the case where $N$ is odd, we set $M({\bf e}^0):=Q(1^+)$. 
For $k\ge 1$: \\
If $I_0^k=J_0^k$ we define $M({\bf e}_k):=\Delta(I^k)\oplus\Delta(J^k)$. 
Otherwise, $M({\bf e}^k)$ is defined to be the unique module obtained 
from $\Ext^1_{D_n}(\Delta(I^k),\Delta(J^k))$ with no self-extensions 
as in Proposition~\ref{prop:E}, Section~\ref{s:gaps}. 
Then the module $M({\bf d})$ is set to be the sum of all $M({\bf e}^k)$: 
\begin{eqnarray*}
M({\bf d}) & := & \bigoplus_{k\ge 1} M({\bf e}^k) \quad \text{if $N$ is even}\\
    & := & \bigoplus_{k\ge 0} M({\bf e}^k) \quad \text{if $N$ is odd}
\end{eqnarray*}
\end{definition}

\begin{ex}\label{ex:construction}
We illustrate the construction with the examples ${\bf d}=(1,3,5,4,5,3,1)$ 
and ${\bf d}=(1,3,5,3,5,3,1)$. 

\begin{itemize}
\item[(a)]  ${\bf d}=(1,3,5,4,5,3,1)$. \\
In a first step, 

${\bf f}^1=(1,1,1,1,1,1,0)$ and ${\bf g}^1=(0,1,1,1,1,1,1)$, 

${\bf f}^2=(0,1,1,1,1,0,0)$ and ${\bf g}^2=(0,0,1,1,1,1,0)$, 

${\bf f}^3=(0,0,1,0,0,0,0)$ and ${\bf g}^3=(0,0,0,0,1,0,0)$.

\noindent
From this we get 

${\bf e}^1=(1,2,2,2,2,2,1)$,

${\bf e}^2=(0,1,2,2,2,1,0)$,

${\bf e}^3=(0,0,1,0,1,0,0)$. 

\noindent
The signed subsets are 

$I^1=\{7^+,6^-,5^+,4^-,3^+,2^-\}$ and $J^1=\{6^-,5^+,4^-,3^+,2^-,1^+\}$, 

$I^2=\{6^+,5^-,4^+,3^-\}$ and $J^2=\{5^-,4^+,3^-,2^+\}$, 

$I^3=\{5^+\}$ and $J^3=\{3^-\}$. 

\noindent
From this, $M({\bf d})=M({\bf e}^1) \oplus M({\bf e}^2) \oplus M({\bf e}^3)$. 
All the $M({\bf e}^k)$ are obtained as extensions. 
\item[(b)] ${\bf d}=(1,3,5,3,5,3,1)$. \\
In a first step, 

${\bf e}^0=(1,1,1,1,1,1,1)$,

${\bf f}^1=(0,1,1,1,1,1,0)$ and ${\bf g}^1=(0,1,1,1,1,1,0)$, 

${\bf f}^2=(0,0,1,0,1,0,0)$ and ${\bf g}^2=(0,0,1,0,1,0,0)$.  

So this gives 

${\bf e}^1=(0,2,2,2,2,2,0)$,

${\bf e}^2=(0,0,2,0,2,0,0)$,

The signed subsets are 

$I^0=\{7^+,6^-,5^+,4^-,3^+,2^-,1^+\}$, 

$I^1=\{6^+,5^-,4^+,3^-,2^+\}$ and $J^1=\{6^-,5^+,4^-,3^+,2^-\}$, 

$I^2=\{5^+,3^-\}$ and $J^2=\{5^-,3^+\}$. 

\noindent
From this, $M({\bf d})=M({\bf e}^0) \oplus M({\bf e}^1) \oplus M({\bf e}^2)$ 
with $M({\bf e}^0)=P(1^+)$, $M({\bf e}^1)=\Delta(I^1)\oplus\Delta(J^1)$ 
and $M({\bf e}^2)=\Delta(I^2)\oplus\Delta(J^2)$. 
\end{itemize}
\end{ex}

\begin{lemma}\label{lm:e_k-exts}
We have $\Ext^1_{D_n}(M({\bf e}^k),M({\bf e}^k))=0$ for all $k$. 
\end{lemma}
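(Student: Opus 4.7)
The plan is to proceed by case analysis on how $M(\mathbf{e}^k)$ is defined in Definition~\ref{defn:M(d)}, reducing each case to results already established in Sections~\ref{s:exts} and~\ref{s:gaps}. The three cases are: (i) $k=0$ (only when $N$ is odd), where $M(\mathbf{e}^0)=Q(1^+)$; (ii) $k\ge 1$ with $I_0^k=J_0^k$, where $M(\mathbf{e}^k)=\Delta(I^k)\oplus\Delta(J^k)$; and (iii) $k\ge 1$ with $I_0^k\ne J_0^k$, where $M(\mathbf{e}^k)=E(I^k,J^k)$.

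Case (i) is immediate since $Q(1^+)$ is the injective hull of $L(1^+)$, hence injective, so $\Ext^1_{D_n}(-,Q(1^+))=0$. Case (ii) reduces to Proposition~\ref{prop:I-in-J}: when $I_0^k=J_0^k$, each of the four cross-terms $\Ext^1(\Delta(I^k),\Delta(I^k))$, $\Ext^1(\Delta(I^k),\Delta(J^k))$, $\Ext^1(\Delta(J^k),\Delta(I^k))$, $\Ext^1(\Delta(J^k),\Delta(J^k))$ vanishes since the inclusion hypothesis $I_0\subseteq J_0$ or $J_0\subseteq I_0$ holds trivially (note that in this case $J^k=-I^k$ because the two signed sets share the same alternatingly signed underlying set but start with opposite signs to satisfy $\Delta(I^k)\subset Q(1^+)$ and $\Delta(J^k)\subset Q(1^-)$).

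Case (iii) is the one that needs verification of hypotheses, and this is the main preparatory obstacle. I would first check that the data $(I^k,J^k)$ satisfies the setup of Section~\ref{s:gaps}: (a) the symmetry relation $J^k=\Phi(I^k)$, and (b) the asymmetry condition that every element of $I_0^k\setminus J_0^k$ is $\ge (n+1)/2$. For (a), I would argue by induction that each $\mathbf{d}^k$ remains symmetric (since $\mathbf{f}^k$ and $\mathbf{g}^k$ are swapped by $i\mapsto n+1-i$ when $\mathbf{d}^{k-1}$ is symmetric, so $\mathbf{e}^k$ is symmetric), hence the unsigned supports satisfy $J_0^k=\Phi(I_0^k)$; the sign convention fixing $\Delta(I^k)\subset Q(1^+)$ and $\Delta(J^k)\subset Q(1^-)$ then matches the sign convention defining $\Phi$. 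For (b), I would inspect the defining formulas: an element $a\in I_0^k\setminus J_0^k$ corresponds to a position $p=n-a+1$ with $(f^k)_p=1$ and $(g^k)_p=0$, which forces $(d^{k-1})_p=1$ and $p<(n+1)/2$, giving $a>(n+1)/2$.

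Once these are in place, Proposition~\ref{prop:E} immediately yields $\Ext^1_{D_n}(E(I^k,J^k),E(I^k,J^k))=0$, completing this case. The main obstacle is the bookkeeping to verify (a) and (b) --- there is a mild pitfall coming from the fact that ``position $i$'' in the definition of $\mathbf{f}^k,\mathbf{g}^k$ corresponds to the element $n-i+1$ of $I_0^k,J_0^k$, so the inequality $i<(n+1)/2$ at the level of positions translates to the desired inequality $a>(n+1)/2$ at the level of elements.
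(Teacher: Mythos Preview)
Your proposal is correct and follows essentially the same approach as the paper: case-split on the definition of $M(\mathbf{e}^k)$ and invoke Proposition~\ref{prop:I-in-J} for the case $I_0^k=J_0^k$ and Proposition~\ref{prop:E} otherwise. You are in fact more thorough than the paper, which omits the $k=0$ case and does not spell out that the pair $(I^k,J^k)$ meets the standing hypotheses of Section~\ref{s:gaps}; your verification of $J_0^k=\Phi(I_0^k)$ via symmetry of $\mathbf{d}^{k-1}$ and of the one-sidedness condition via the position/element index reversal is exactly what is needed to justify appealing to Proposition~\ref{prop:E}.
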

\begin{proof}
If $I_0^k=J_0^k$ then $M({\bf e}^k)$ is a sum of two type I 
modules with identical support: 
$M({\bf e}^k)=\Delta(I^k)\oplus\Delta(J^k)$ with 
$I_0^k=J_0^k$. By Proposition~\ref{prop:I-in-J} 
this has no self extensions. 
In the other case the claim follows from Proposition~\ref{prop:E}. 
\end{proof}

\begin{theorem}\label{thm:rich-module}
Let $M({\bf d})$ be the module as constructed above. 
Then we have 

(i) $M({\bf d})$ has no self-extensions;

(ii) The $\Delta_{A_n}$-support of $M({\bf d})\downarrow_{A_n}$ 
is ${\bf d}$. 
\end{theorem}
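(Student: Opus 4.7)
The plan is to treat (ii) as a direct bookkeeping step on $\Delta$-multiplicities, and then reduce (i) to a uniform Ext-vanishing via the nested subset chain built into the construction. For (ii), I will check summand by summand that $M(\mathbf{e}^k)\!\downarrow_{A_n}$ has $\Delta$-multiplicity vector exactly $\mathbf{e}^k$. In the type I case $M(\mathbf{e}^k)=\Delta(I^k)\oplus\Delta(J^k)$, Proposition~\ref{prop:ext}(a) together with Lemma~\ref{lemind}(c) gives the restriction $\Delta(I_0^k)\oplus\Delta(J_0^k)$, contributing at index $i$ the multiplicity $(f^k)_i+(g^k)_i=(e^k)_i$. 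In the type II case, the defining sequence of $M(\mathbf{e}^k)=E(I^k,\pm J^k)$ from Proposition~\ref{prop:E} gives a $\Delta$-filtration with the same two factors, hence the same multiplicities after restriction. For odd $N$ the extra term $M(\mathbf{e}^0)=Q(1^+)$ restricts to $P(1)$, which has multiplicity vector $(1,\ldots,1)=\mathbf{e}^0$. Summing over $k$ then recovers $\mathbf{d}=\sum_k\mathbf{e}^k$ by construction.

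For (i), since $M(\mathbf{d})=\bigoplus_k M(\mathbf{e}^k)$, it suffices to prove $\Ext^1_{D_n}(M(\mathbf{e}^k),M(\mathbf{e}^l))=0$ for all $k,l$. The diagonal case is Lemma~\ref{lm:e_k-exts}. If $k=0$ or $l=0$ then the corresponding summand is the projective-injective module $Q(1^+)$, so Ext into and out of it vanishes trivially. The real task is thus the off-diagonal case for $k,l\ge 1$, and by symmetry I take $l>k$.

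The key input is the inclusion
\[
I_0^l\cup J_0^l\;\subset\;I_0^k\cap J_0^k\qquad(l>k\ge 1),
\]
obtained by combining Remark~\ref{rem:I_k-inclusions} (which gives the decreasing chains $I_0^k\supset I_0^l$ and $J_0^k\supset J_0^l$) with iterated application of Lemma~\ref{lem:subsets} (which crosses between the two chains to give $I_0^l\subset J_0^{l-1}\subset\cdots\subset J_0^k$ and $J_0^l\subset I_0^{l-1}\subset\cdots\subset I_0^k$). Granted this, the unsigned $\Delta$-support of $M(\mathbf{e}^l)$ lies inside both $I_0^k$ and $J_0^k$, so Corollary~\ref{cor:support(M)-I} yields
\[
\Ext^1_{D_n}(\Delta(I^k),M(\mathbf{e}^l))=\Ext^1_{D_n}(M(\mathbf{e}^l),\Delta(I^k))=0,
\]
and likewise with $J^k$ in place of $I^k$. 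Applying $\Hom_{D_n}(-,M(\mathbf{e}^l))$ and $\Hom_{D_n}(M(\mathbf{e}^l),-)$ to the short exact sequence expressing $M(\mathbf{e}^k)$ as an extension of one $\Delta$-module by another (splitting trivially in the type I case, and coming from the defining pullback sequence of $E(I^k,\pm J^k)$ in the type II case) then closes out both Ext directions.

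The main obstacle I anticipate is verifying the containment $I_0^l\cup J_0^l\subset I_0^k\cap J_0^k$ rigorously. This hinges on tracing through the recursion for $\mathbf{f}^k,\mathbf{g}^k$: an index $i$ belonging to exactly one of $I_0^k,J_0^k$ forces $(d^k)_i=1$ with $i$ on the designated side of $(n+1)/2$, and the update rule in step (2) of the algorithm then ensures that such an $i$ is removed from both $I_0^{k+1}$ and $J_0^{k+1}$ and does not reappear. This is essentially the content of Lemma~\ref{lem:subsets}, but extra care will be needed when $N$ is odd near the middle, because the initial $\mathbf{e}^0$ step interacts with the first recursive step and must be handled separately.
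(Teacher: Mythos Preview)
Your proposal is correct and follows essentially the same route as the paper: reduce to pairwise $\Ext^1$-vanishing between the summands $M(\mathbf{e}^k)$, invoke Lemma~\ref{lm:e_k-exts} on the diagonal, and for $k\ne l$ use the subset inclusions coming from Remark~\ref{rem:I_k-inclusions} and Lemma~\ref{lem:subsets} together with Proposition~\ref{prop:I-in-J} (you package this via Corollary~\ref{cor:support(M)-I}, the paper applies Proposition~\ref{prop:I-in-J} directly to the four pairs and then uses both short exact sequences, but the content is identical). Your anticipated ``main obstacle'' is not actually an obstacle: the containment $I_0^l\cup J_0^l\subset I_0^k\cap J_0^k$ for $l>k\ge 1$ is exactly what Lemma~\ref{lem:subsets} plus Remark~\ref{rem:I_k-inclusions} give, and the odd-$N$ interaction with $\mathbf{e}^0$ is already absorbed by your separate treatment of $Q(1^+)$ as projective--injective.
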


\begin{proof}
(i) 
Using Lemma \ref{lem:subsets} and applying Proposition
\ref{prop:I-in-J} we see that 
$\Ext^1_{D_n}(\Delta(I^k), \Delta(J^l)) =0 $
and $\Ext^1_{D_n}(\Delta(J^k), \Delta(I^l)) =0 $ for all $k \ne l$.
Thus using the short exact sequences for $M({\bf{d}}^k)$ and
$M({\bf{d}}^l)$ we see that
$\Ext^1_{D_n}(M({\bf{d}}^k), M({\bf{d}}^l))=0$ for all $k \ne l $.
Since, by construction 
$\Ext^1_{D_n}(M({\bf{d}}^k), M({\bf{d}}^k))=0$, it follows that
$\Ext^1_{D_n}(M({\bf{d}}), M({\bf{d}}))=0$.

(ii) follows from the construction. 
\end{proof}

\begin{remark}
We observe that in the case where for some $k\ge 1$ the subsets 
$I_0^k$ and $J_0^k$ are different then this procedure involves 
a choice of signs: the signs of $I^k$ and $J^k$ are given by the 
requirement that $\Delta(I^k)$ is a submodule of $Q(1^+)$ and 
that $\Delta(J^k)\subset Q(1^-)$. We could equally well 
take $-I^k$ and $-J^k$ instead. 

This ambiguity arises unless all $d_i$ are even. 
If all the $d_i$ are even, the module $M({\bf d})$ 
is induced by the module $\Delta({\bf d})$ from Section 2 of~\cite{bhrr}, 
$M({\bf d})=\Delta({\bf d})\otimes_{A_n}D_n$. 
\end{remark}

\begin{lemma}\label{lm:signs}
Let ${\bf d}=(d_1,\dots,d_n)$ be a symmetric dimension vector. Let 
$s_{odd}$ be the number of different odd entries of ${\bf d}$ and $s_{even}$ 
the number of different even entries of ${\bf d}$. 

If $N$ is even then 
there are $2^{s_{odd}}$ different $D_n$-modules without 
self-extensions such that 
their restriction to $A_n$ has $\Delta$-support ${\bf d}$. 

If $N$ is odd then there are $2^{1+s_{even}}$ different $D_n$-modules without 
self-extensions such that 
their restriction to $A_n$ has $\Delta$-support ${\bf d}$. 
\end{lemma}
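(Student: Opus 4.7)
The plan is to enumerate the sign freedoms in the construction of $M({\bf d})$ from Definition~\ref{defn:M(d)}. Two independent choices enter. First, when $N$ is odd, $M({\bf e}^0)$ can be taken as either $P(1^+)$ or $P(1^-)$, which are non-isomorphic. Second, at each step $k \ge 1$ the signed sets $I^k, J^k$ (chosen so that $\Delta(I^k)\subset Q(1^+)$ and $\Delta(J^k)\subset Q(1^-)$) can be jointly replaced by their sign-flipped versions $-I^k, -J^k$; however, when $I_0^k = J_0^k$ one has $-I^k = J^k$, and $M({\bf e}^k) = \Delta(I^k)\oplus\Delta(-I^k)$ is invariant under this flip. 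Hence genuinely new modules arise only from those $k \ge 1$ with $I_0^k \ne J_0^k$.

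Next I would count these split steps. From the definitions of ${\bf f}^k$ and ${\bf g}^k$, one has $I_0^k \ne J_0^k$ iff some coordinate satisfies $({\bf d}^{k-1})_i = 1$. The update rule is coordinate-wise: $({\bf d}^{k-1})_i \ge 2$ gives $({\bf d}^k)_i = ({\bf d}^{k-1})_i - 2$, while $({\bf d}^{k-1})_i = 1$ gives $({\bf d}^k)_i = 0$, so each coordinate's trajectory is independent of the others. For $N$ even the trajectory starts at $d_i$ and reaches the value $1$ (uniquely, at step $k = (d_i+1)/2$) iff $d_i$ is odd. For $N$ odd the ${\bf e}^0$-step first subtracts one from every coordinate, so the trajectory reaches $1$ (at step $k = d_i/2$) iff $d_i$ is even; the middle coordinate is safe because symmetry of ${\bf d}$ together with $N$ odd forces $d_{(n+1)/2}$ to be odd, whence $d_{(n+1)/2}-1$ is even and never equals $1$. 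Distinct $d_i$-values in the relevant parity class produce distinct split steps while equal values collapse to a common one, so the total number of $k \ge 1$ with $I_0^k \ne J_0^k$ equals $s_{odd}$ when $N$ is even and $s_{even}$ when $N$ is odd.

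Finally I would invoke Krull--Schmidt to conclude that different combinations of sign choices produce non-isomorphic modules $M({\bf d})$. At each split step $k$, the two candidates for $M({\bf e}^k)$ have different signed $\Delta$-supports and are distinguished either through different pairs of indecomposable summands $\Delta(\tilde I)\oplus\Delta(\tilde J)$ versus $\Delta(-\tilde I)\oplus\Delta(-\tilde J)$ (by Theorem~\ref{thm:Edecomp}) or, in the type II case, through the heads of the indecomposable extensions; similarly $P(1^+)\not\cong P(1^-)$ for $N$ odd. Multiplying the independent binary choices yields $2^{s_{odd}}$ modules when $N$ is even and $2\cdot 2^{s_{even}} = 2^{1+s_{even}}$ when $N$ is odd. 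The main obstacle I expect is the combinatorial bookkeeping in the middle step, namely verifying the coordinate-wise decoupling of trajectories and the bijective correspondence between distinct parity-class values of $d_i$ and distinct split steps; once this is established, the rest reduces cleanly to an application of Krull--Schmidt.
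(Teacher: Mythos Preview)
Your approach is essentially the same as the paper's: both count the binary sign choices in the construction of $M({\bf d})$, showing that a genuine choice occurs at step $k\ge 1$ precisely when $I_0^k\ne J_0^k$, and that (for $N$ even) this happens exactly once for each distinct odd value among the $d_i$; the odd-$N$ case is reduced to the even case by stripping off $M({\bf e}^0)\in\{Q(1^+),Q(1^-)\}$ and replacing ${\bf d}$ by ${\bf d}-{\bf e}^0$. Your coordinate-trajectory bookkeeping and the Krull--Schmidt argument for non-isomorphism are in fact more explicit than the paper's version, which asserts $M({\bf e}^1)\ne\widetilde{M}({\bf e}^1)$ without further comment; otherwise the two arguments coincide.
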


\begin{proof}
We will proof the case of even $N$. For the odd case, observe that 
$M({\bf e}^0)=Q(1^+)$ and $Q(1^-)$ both have the same $\Delta$-support 
$[n]$ when restricted to $A_n$. Thus we are left to understand 
$\oplus_k M({\bf e}^k)$ for $k>0$. This is equivalent to consider 
the module $M({\bf d}-{\bf e}^0)$ and $N-n$. This number is even, 
since $n$ is odd for odd orthogonal groups. The number of 
odd entries of ${\bf d}-{\bf e}^0$ is just the number of even 
entries of ${\bf d}$. 
So the case of odd $N$ reduces to the even case after 
subtracting $1$ from all the $d_i$. \\

Let now $N$ be even. 
Set $\widetilde{m}$ to be the smallest entry of ${\bf d}$. If it is even, let 
$m:=\widetilde{m}/2$. Then the algorithm obtains 
as ${\bf e}^1,\dots,{\bf e}^m$ dimension vectors consisting only of $2$'s 
and the corresponding modules $M({\bf e}^k)$ are 
$\Delta(I^k)\oplus \Delta(J^k)=\Delta(I^k)\oplus \Delta(-I^k)$, $1\le k\le m$. 
So the first $m$ summands of $M({\bf d})$ are uniquely determined 
and we can ignore them: W.l.o.g. let the minimal entry $\widetilde{m}$ 
of ${\bf d}$ be odd. 
Then $I^1_0\neq J^1_0$, i.e. $\Delta(I^1)\neq \Delta(-J^1)$. In particular, if 
we set $\widetilde{M}({\bf e}^1)$ to be the the unique module obtained 
from $\Ext^1_{D_n}(\Delta(-I^k),\Delta(-J^k))$ with no self-extensions 
as in Proposition~\ref{prop:E}, Section~\ref{s:gaps}, then 
the restrictions to $A_n$ of $M({\bf e}^1)$ and of $\widetilde{M}({\bf e}^1)$ 
are identical, but $M({\bf e}^1)\neq \widetilde{M}({\bf e}^1)$. 

Now by step (3) of the algorithm, the remaining dimension vector 
is ${\bf d}^2$. Let $\widetilde{m}_2$ be its minimal entry. If it is even, 
the algorithm produces $\widetilde{m}_2/2$ vectors ${\bf e}^k$ whose entries 
are only $0$s and $2$s. The corresponding $I^k$ are equal to $-J^k$ 
and thus again we have 
$M({\bf e}^k)=\Delta(I^k)\oplus \Delta(J^k)=\Delta(I^k)\oplus \Delta(-I^k)$. 
So the only interesting thing happens when $\widetilde{m}_2$ is odd. 
In that case, the same reasoning as above shows that there are 
two $D_n$-modules with identical restriction to $A_n$. 

Therefore, each different odd entry of ${\bf d}$ produces a pair of 
$D_n$-modules with no self-extensions and with identical 
$\Delta$-support when restricted to $A_n$. 
\end{proof}

As an illustration of this: in Example~\ref{ex:construction}, part (a) 
for each of the summands $M({\bf e}^k)$ there is another module with 
same $\Delta$-support when restricted to $A_n$. In part (b), only 
for $M({\bf e}^0)$  
there is another module with the same $\Delta$-support when 
restricted to $A_n$, namely $Q(1^-)$.

%%%%%%%%%%%%%%%%%%%%%%%%%%%%%%%%
%
\begin{appendix}
\section{$R_n$, $S_n$ and their Auslander algebras}%\label{s:A_n}
%
%%%%%%%%%%%%%%%%%%%%%%%%%%%%%%%

We fix a field $k$ of characteristic $\neq 2$. 
Recall that we have defined $R_n:=k[T]/T^n$.
The algebra $R_n$ has  precisely $n$ indecomposable modules,
of dimensions $1,2, \ldots, n$. Following~\cite{bhrr} we write
$M(i)$ for the indecomposable module of dimension $n-i+1$. 

We work with right modules, and we write maps to the left.
The Auslander algebra of $R_n$
is then by definition 
the algebra $A_n:=\End(M)$ where $M:=\bigoplus_{i=1}^n M(i)$.
In~\cite{bhrr}, a presentation by quiver and relations is given. 
Since we will need it in the proof of Proposition~\ref{prop:Dn-skew}, 
we give explicit generators for $A_n$.

For each $i$ with $1\leq i\leq n$ take a basis of $M(i)$,
$$\{ b_j^{(i)}: 1\leq j\leq n-i+1\}
$$
such that $(b_j^{(i)})T= b_{j+1}^{(i)}$ 
(with the convention that $b_{n-i+2}^{(i)} = 0$)
Then $M$ has basis  ${\mathcal B}$, the union of all these bases.
The 
algebra $A_n$ is generated, as an algebra, by inclusion 
maps $\alpha_{a-1}: M(a)\to M(a-1)$
together with maps $\beta_{a+1}: M(a)\to M(a+1)$, which are surjections.
We fix such maps explicitly, as
$$\alpha_{a-1}(b_i^{(a)}) := b_{i+1}^{(a-1)}, \ \ \beta_{a+1}(b_i^{(a)}) := b_i^{(a+1)}
$$
(for $1\leq i\leq n-a+1$ and with the obvious conventions).

This gives directly the quiver and relations. 
%
%In general, recall that (see \cite[p.59]{ass}) the 
%quiver of a basic %(i.e. $e_iA\not\cong e_jA$ for $i\neq j$)
%and connected finite dimensional $k$-algebra $A$,
%(where simple modules have endomorphism algebra isomorphic to $k$)
% has the following form:
%if $e_1,\dots,e_r$ are the idempotents of $A$, $Q_A$ has vertices
%$1,2,\dots,r$. The arrows $a\to b$, for any two vertices $a,b$,
%are in bijective correspondence with the vectors in a basis
%of the $k$-vector space $e_a(\rad A/\rad^2 A)e_b$.
%
%Accordingly,  
$A_n$ 
is  given by a quiver $Q_n$ with $n$ vertices
$\{1,2,\dots,n\}$
and $2n-2$ arrows between them,
$\alpha_i:i\to i+1$ for $i=1,\dots,n-1$ and
$\beta_i:i\to i-1$ for $i=2,\dots,n$,
subject to the relations
$\beta_i\alpha_{i-1}=\alpha_i\beta_{i+1}$ for $1<i<n$ and
$\beta_n\alpha_{n-1}=0$.

%We may also describe $Q_n$ by defining a translation
%$\tau$ on vertices $2,3,\dots, n$, namely
%$\tau:i\to i$ for $1<i\le n$, and by observing that
%$(Q_n,\tau)$ is a translation quiver with one projective
%vertex (the vertex $1$).

%
%Returning to $R_n$, this has quiver
%consisting of one
%vertex and a loop $\alpha$ at it, subject to
%$\alpha^n=0$ (cf. \cite[II.2.13 (c)]{ass}).
%%
%$$
%{\small
%\xymatrix@-5mm{
% \bullet \ar@(ur,dr)^\alpha
%  }}
%$$
%%
%The algebra 
%$R_n$ is a self-injective Nakayama algebra.

%%%%%%%%%%%%%%%%%%%%%%%%%%%%%%%
%
%\section{Description of $S_n$}\label{s:D_n}
%
%%%%%%%%%%%%%%%%%%%%%%%%%%%%%%%

The algebra $S_n$ also is of finite type (for details, see below). 
We  let $D_n$ be its Auslander
algebra, this is of main interest to us. We will first state
its presentation by quiver and relations, and below we will show that
it is isomorphic to a skew group ring of $A_n$ with a cyclic group
of order 2 (which then will also prove the presentation).

\noindent We start by defining a quiver of cylindrical shape.
\begin{definition}\label{defn:Gamma_n}
For $n\geq 2$, let  $\Gamma_n$ be the quiver with vertices
$\{1^+,1^-,2^+, 2^-,\dots,n^+, n^-\}$ and
arrows $\alpha_{i^{\pm}}$ going from
$i^{\pm}$ to $(i+1)^{\mp}$,
and $\beta_{i^{\pm}}$ going from
$i^{\pm}$ to $(i-1)^{\pm}$, that is 
\[
\left\{
\begin{array}{lll}
\alpha_{i^+}:\ i^+\to(i+1)^- ,& \alpha_{i^-}:\ i^-\to(i+1)^+,
 & 1\le i<n\\
\beta_{i^+}:\ i^+\to(i-1)^+ ,&
\beta_{i^-}:\ i^-\to(i-1)^-, & 1< i\le n
\end{array}
\right\} \, .
\]
\end{definition}

\noindent 

Then the Auslander algebra of $S_n$ is given by the quiver $\Gamma_n$
subject to the relations
\[
\begin{array}{ll}
\beta_{i^+}\alpha_{(i-1)^+}=\alpha_{i^+}\beta_{(i+1)^-}, & 1<i<n \\
\beta_{i^-}\alpha_{(i-1)^-} =\alpha_{i^-}\beta_{(i+1)^+}, & 1<i<n \\
\beta_{n^-}\alpha_{(n-1)^-}=\beta_{n^+}\alpha_{(n-1)^+}=0.
\end{array}
\]
One way to see that this is a presentation of $D_n$ is via Auslander-Reiten
theory.
If we let $\tau$ be the map sending
$i^{\pm}$ to $i^{\mp}$ for $i=2,\dots,n$ then
$(\Gamma_n,\tau)$ is a  translation quiver with
projective vertices $\{1^+,1^-\}$. It is precisely the Auslander-Reiten 
quiver of the algebra $S_n$, and the relations are the
`mesh relations'.

%%%%%%%%%%%%%%%%%%%%%%%%%%%%%%%%
%
\subsection{The indecomposable $S_n$-modules}
%
%%%%%%%%%%%%%%%%%%%%%%%%%%%%%%%%%%

We have defined 
$S_n= R_n\langle g\rangle$, the skew group algebra,
where $g(T)=-T$ (see the introduction). 
We identify the subalgebra $R_n\otimes 1$  of $S_n\cong R_n\otimes k\langle g\rangle$ with $R_n$
and the subalgebra $1\otimes k\langle g\rangle$ with $k\langle g\rangle$.
The algebra $S_n$ has orthogonal idempotents
$e_0, e_1$ with $1=e_0 + e_1$, where 
$$e_0 = \frac{1}{2}(1+g), \ \ e_1 := \frac{1}{2}(1-g).
$$
Then $S_n =  e_0S_n \oplus e_1S_n$ as $S_n$-modules.
A basis for $e_iS_n$ is given by (the cosets of)
$$e_i, \ e_iT, \ e_iT^2, \ldots, e_iT^{n-1}\, .
$$
In particular the $e_iS_n$ are uniserial of length $n$, and are indecomposable.
One checks that $e_0T = Te_1$ and $e_1T= Te_0$, which implies
that the composition factors of $e_iS_n$ alternate.
We write $L^+$ for the simple top of $e_0S_n$, and $L^-$ for 
the simple top of $e_1S_n$. Then
$g$ has eigenvalue $1$ on $L^+$ and eigenvalue $-1$ on $L^-$.

This shows that
the quiver of $S_n$ has two vertices
which we denote by $+$
and $-$, and  two arrows,
one from $+$ to $-$ and one
from $-$ to $+$. The relations are 
that any path of length $\geq n$ is zero.
$$
{\small
%\xymatrix@-5mm{
\xymatrix{
 + \ar@/^/[r] & - \ar@/^/[l]
 }}
$$
$S_n$ is also a
self-injective Nakayama algebra.
In particular it has finite type, there are 
$2n$ indecomposable  modules (up to isomorphism) 
and each indecomposable
module is uniserial. 

Since $g$ has order 2 and the field has characteristic not equal to 
$2$, every $S_n$-module $X$ is relative $R_n$-projective, that is,
the multiplication map
$X\otimes_{R_n}S_n \to X$
splits. 

It is easy to construct the indecomposable $S_n$-modules by inducing
from $R_n$.
Using the explicit basis for the $R_n$-module  $M(i)$
given above, 
$M(i)\otimes _{R_n}S_n$ has basis 
$\{ b_j\otimes e_0, \ b_j\otimes e_1: 1\leq j\leq n-i+1\}$
(omitting $(-)^{(i)}$ since we fix $i$ for the moment).
An easy check gives
$$b_j\otimes e_0T = b_jT\otimes e_1 = b_{j+1}\otimes e_1,
$$
and similarly $b_j\otimes e_1T = b_{j+1}\otimes e_0$.
Since $T$ generates the algebra $R_n$, this shows that the induced module
is the direct sum of $M(i^+)$ with $M(i^-)$ where
$M(i^+)$ has basis
$$\{ b_1\otimes e_0, b_2\otimes e_1, b_3\otimes e_0, b_4\otimes e_1, \ldots \}
$$
and similarly for $M(i^-)$.
The top of $M(i^+)$ is $L(i^+)$, similarly for $M(i^-)$. 
This gives $2n$ uniserial modules for $S_n$ which clearly are pairwise
non-isomorphic. Hence this is a full set of the indecomposable 
$S_n$-modules. It follows that
$$D_n = {\rm End}_{S_n} (M\otimes_{R_n}S_n).
$$
With the explicit description of the modules $M(i^{\pm})$ it is
easy to write down maps which lead to the relations stated in 
\ref{defn:Gamma_n}. By using standard methods, it is not difficult
to prove the following, and we omit details.

\begin{prop}\label{prop:Dn-skew}
%\noindent {\bf Proposition } {\it 
The algebra $D_n$ is isomorphic to a skew group
algebra $A_n\langle \bar{g}\rangle$ where $\bar{g}$ has order 2.%}
\end{prop}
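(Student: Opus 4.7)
The plan is to construct the required involution on $A_n$ explicitly, form the corresponding skew group algebra, and then identify it with $D_n$ through the induction-restriction adjunction. The construction follows the general principle that, when a finite group $H$ with $|H|$ invertible in $k$ acts on a representation-finite algebra $R$ by automorphisms, the Auslander algebra of the skew group algebra $R\langle H\rangle$ is the skew group algebra of the Auslander algebra of $R$ (a result of Reiten--Riedtmann type).

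First I would define an action of $\bar{g}$ on $A_n$. The generator $g$ acts on $R_n$ by $g(T) = -T$, and twisting by this automorphism sends each indecomposable $R_n$-module $M(i)$ to a module isomorphic to $M(i)$. Explicitly, on the basis $\{b_j^{(i)}\}$ used earlier, the formula $\psi(b_j^{(i)}) := (-1)^{j-1}b_j^{(i)}$ defines a $k$-linear involution $\psi: M \to M$ that is $g$-semilinear, i.e. $\psi(mT) = \psi(m)\cdot g(T) = -\psi(m)T$ and $\psi^2 = \mathrm{id}_M$. Then conjugation $f \mapsto \psi\circ f\circ \psi$ is an algebra automorphism of $A_n = \End_{R_n}(M)$ of order $2$; call this action $\bar{g}$. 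A short computation on the generators shows $\bar{g}(\alpha_i) = -\alpha_i$ and $\bar{g}(\beta_i) = \beta_i$, so the action is compatible with the defining relations. Form the skew group algebra $A_n\langle\bar{g}\rangle = A_n \oplus A_n\bar{g}$ with product $(a\bar{g})(b\bar{g}) = a\,\bar{g}(b)$.

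Next I would construct the comparison map $\Phi: A_n\langle\bar{g}\rangle \to D_n = \End_{S_n}(M\otimes_{R_n}S_n)$. For $a \in A_n$ send $a \mapsto a\otimes \id_{S_n}$ (extension of scalars), and send $\bar{g} \mapsto \tilde{g}$, where $\tilde{g}(m\otimes s) := \psi(m)\otimes gs$. One checks that $\tilde{g}$ is well-defined and $S_n$-linear using precisely the $g$-semilinearity of $\psi$, that $\tilde{g}^2 = \id$, and that $\tilde{g}\circ(a\otimes \id_{S_n})\circ\tilde{g} = \bar{g}(a)\otimes \id_{S_n}$. Together these verifications show that $\Phi$ is an algebra homomorphism.

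Finally I would establish that $\Phi$ is an isomorphism. Injectivity of $\Phi$ restricted to $A_n\otimes 1$ follows from faithfulness of the induction functor (equivalently, from the Maschke-type splitting of $X\otimes_{R_n}S_n\to X$ which was used earlier in Section~\ref{indres}). For bijectivity one can compare dimensions: by the adjunction $\Hom_{S_n}(M\otimes_{R_n}S_n,-)\cong \Hom_{R_n}(M,(-)\downarrow_{R_n})$ and the decomposition $(M\otimes_{R_n}S_n)\downarrow_{R_n}\cong M\oplus M^g\cong M\oplus M$, we get $\dim_k D_n = 2\dim_k A_n = \dim_k A_n\langle\bar{g}\rangle$, so $\Phi$ is an isomorphism. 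The main obstacle is really the bookkeeping of signs in the semilinearity step that guarantees $\tilde{g}$ is $S_n$-linear and implements $\bar{g}$ by conjugation; once this is in place, the quiver relations of Definition~\ref{defn:Gamma_n} appear automatically from the skew group algebra structure, confirming the presentation stated there.
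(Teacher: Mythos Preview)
The paper actually omits this proof entirely (``by using standard methods, it is not difficult to prove the following, and we omit details''), so there is nothing to compare against; your argument supplies exactly the kind of Reiten--Riedtmann construction the paper is alluding to, and the computations with $\psi$, $\tilde g$ and the adjunction are all correct.

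There is one small logical slip in the final paragraph. You argue: (a) $\Phi$ is injective on the subspace $A_n\otimes 1$, and (b) $\dim_k A_n\langle\bar g\rangle=\dim_k D_n$; then you conclude $\Phi$ is an isomorphism. But (a) and (b) together do not give this: a priori $\Phi$ could have a nonzero kernel that meets $A_n\otimes 1$ trivially. The fix is immediate with the ingredients you already have. Since $S_n$ is free as a left $R_n$-module on the basis $\{1,g\}$, we have $M\otimes_{R_n}S_n = (M\otimes 1)\oplus(M\otimes g)$ as $k$-vector spaces. If $\Phi(a+b\bar g)=0$, then evaluating at $m\otimes 1$ gives
\[
a(m)\otimes 1 + b\psi(m)\otimes g = 0,
\]
and the two components vanish separately, so $a=0$ and $b\psi=0$, hence $b=0$. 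This gives full injectivity of $\Phi$, and then the dimension count finishes the proof.
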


%\begin{lemma}
%Identify $A_n$ with the subalgebra $\Lambda$ of $D_n$. Then
%the action of $\bar{g}$ on the arrows of $A_n$ is as follows

%(i) $\bar{g}(\alpha_a) = -\alpha_a$, and

%(ii) $\bar{g}(\beta_a) = \beta_a$. %}
%\end{lemma}

%
%%{\it Proof }
%\begin{proof}
%(i) \ We must calculate $\psi \alpha_a \psi$. This is supported on
%$M(a+1)$. It takes
%$b_j^{(a+1)}$ to
%$$\psi\alpha_a((-1)^jb_j^{(a+1)}) =
%(-1)^j \psi(b_{j+1}^{(a)}) = (-1)^j(-1)^{j+1}b_{j+1}^{(a)}
%= -\alpha_a(b_j^{(a+1)}i)
%$$
%and hence $\psi\alpha_a\psi = -\alpha_a$ which implies part (i).
%Similarly one proves part (ii).
%\end{proof}

%

%%%%%%%%
%here a suggestion just to describe the AR quiver in general terms 
%%%%%%%%%%%%%%%%%%%%%%%%%%%%

The Auslander-Reiten quiver $\Gamma_n$ of $S_n$ (hence the quiver
of $D_n$) is isomorphic to $\mathbb{Z}A_n/\tau^2$, the irreducible maps are precisely
the inclusions of radicals, and taking the socle quotients. One checks that
with appropriate labelling, 
it gives precisely the quiver of the algebra $D_n$

%%%%%%%%%%%%%%%%%
%illustrated 
%for $n=2m$ and for $n=2m+1$ in 
%Figure~\ref{fig:Dn}. Note that 
%$\eps=+$ if $m$ is odd and $\eps=-$ if $m$ is even. 
%Note that we define $\overline{\eps}$ to be the opposite sign to $\eps$.
%
%\begin{figure}
%$${\small
%\xymatrix@-5mm{
%\xymatrix@R=4mm@C=-3mm{
%1^+\ar[rd] & & 1^-\ar[rd] & & 1^+ \\
%\ar@{.}[r] & 2^-\ar[ru]\ar[rd]\ar@{.}[rr] &
%& 2^+\ar[ru]\ar[rd]\ar@{.}[r] &  \\
%3^-\ar[ru]\ar[rd]\ar@{.}[rr] &
% & 3^+\ar[ru]\ar[rd]\ar@{.}[rr]
% & & 3^- \\
%\ar@{.}[dd] & \ar[ru] & \ar@{.}[dd] 
% & \ar[ru] & \ar@{.}[dd] \\
%& \ar@{.}[d] && \ar@{.}[d] & \\
% & && & \\ 
% & \ar[rd] && \ar[rd] & \\
%(2m-1)^{\eps}\ar[rd]\ar[ru]\ar@{.}[rr] & & 
%(2m-1)^{\overline{\eps}}\ar[rd]\ar[ru]\ar@{.}[rr] 
% && (2m-1)^{\eps} \\
%\ar@{.}[r] & (2m)^{\overline{\eps}}\ar[ru]\ar@{.}[rr] &
% & (2m)^{\eps}\ar[ru]\ar@{.}[r] & 
%}}
%{\small
%\xymatrix@-5mm{
%\xymatrix@R=4mm@C=-3mm{
%1^+\ar[rd] && 1^-\ar[rd] && 1^+ \\
%\ar@{.}[r] & 2^-\ar[ru]\ar[rd]\ar@{.}[rr] &
%& 2^+\ar[ru]\ar[rd]\ar@{.}[r] &  \\
%3^-\ar[ru]\ar[rd]\ar@{.}[rr] &
% & 3^+\ar[ru]\ar[rd]\ar@{.}[rr]
% && 3^- \\
%\ar@{.}[dd] & \ar[ru] & \ar@{.}[dd] 
% & \ar[ru] & \ar@{.}[dd] \\
%& \ar@{.}[dd] && \ar@{.}[dd] & \\
% & && & \\ 
% \ar[rd] && \ar[rd] && & \\
%\ar@{.}[r] & (2m)^{\eps}\ar[rd]\ar[ru]\ar@{.}[rr] 
% && (2m)^{\overline{\eps}}\ar[ru]\ar[rd]\ar@{.}[r] &\\
%(2m+1)^{\overline{\eps}}\ar[ru]\ar@{.}[rr] &
% & (2m+1)^{\eps}\ar[ru]\ar@{.}[rr] && (2m+1)^{\overline{\eps}}
%}}
%$$
%\caption{AR-quiver of $S_n$, $n=2m$ and $n=2m+1$ 
%respectively}\label{fig:Dn}
%\end{figure}
%
The Auslander-Reiten quiver of
$S_n$ looks like a cylinder (as does the AR quiver of any 
self-injective 
Nakayama algebra). In general, for $k$ algebraically closed
not of characteristic $2$, the quiver for the Auslander
algebra of an algebra of finite type is the Auslander-Reiten
quiver of this algebra, and the relations
are the `mesh relations' (see \cite[p.232]{ars}). 
This gives us: 

\begin{prop}\label{prop:AR-quiver}
$\Gamma_n$ is the Auslander-Reiten quiver of $S_n$.
\end{prop}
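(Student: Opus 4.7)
The plan is to deduce Proposition~\ref{prop:AR-quiver} as an application of the general theorem that for a representation-finite algebra $\Lambda$ over an algebraically closed field of characteristic not $2$, the Auslander algebra of $\Lambda$ is presented by the Auslander--Reiten quiver of $\Lambda$ together with the mesh relations; this is the result cited above to \cite[p.\,232]{ars}. Since $D_n$ has already been given a presentation by $\Gamma_n$ with the relations in Definition~\ref{defn:Gamma_n}, the work reduces to identifying those data with the AR data of $S_n$.

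First, I would record that $S_n$ is representation-finite: as observed above it is a self-injective Nakayama algebra with exactly $2n$ indecomposable modules, namely the uniserial modules $M(i^\pm)$ constructed in the appendix by inducing from $R_n$. Hence the vertex set of the AR quiver of $S_n$ is in canonical bijection with the vertex set $\{i^\pm : 1\leq i\leq n\}$ of $\Gamma_n$, the projective-injective vertices being $\{1^+,1^-\}$.

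Second, I would identify the AR translate $\tau$. For $i\geq 2$ the module $M(i^\pm)$ is non-projective, and using $\tau = \Omega^2\nu^{-1}$ together with the fact that the Nakayama functor $\nu$ of $S_n$ swaps $L^+$ and $L^-$ (equivalently, reading off the alternating composition series of $M(i^\pm)$), one checks $\tau M(i^\pm) \cong M(i^\mp)$. This matches the assignment $\tau(i^\pm) = i^\mp$ described just before Definition~\ref{defn:Gamma_n}.

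Third, I would match the irreducible morphisms to the arrows of $\Gamma_n$. Since $S_n$ is Nakayama, the irreducible maps between indecomposables are exactly the canonical inclusions $\operatorname{rad}M \hookrightarrow M$ and surjections $M \twoheadrightarrow M/\soc M$. Using the explicit bases $\{b_j^{(i)}\otimes e_0,\, b_j^{(i)}\otimes e_1\}$ computed for $M(i^\pm)$ in the appendix, the $R_n$-inclusion $M(a)\hookrightarrow M(a-1)$ induces an inclusion $M(a^\pm)\hookrightarrow M((a-1)^\mp)$ (because multiplying the top vector by $T$ swaps $e_0$ and $e_1$), and similarly the surjections switch sign. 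These maps correspond exactly to the arrows $\alpha_{i^\pm}$ and $\beta_{i^\pm}$ of $\Gamma_n$, with the correct sign conventions.

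Combining these three points, $\Gamma_n$ with translation $\tau$ is the AR quiver of $S_n$, and the associated mesh relations are precisely those listed in Definition~\ref{defn:Gamma_n} (with $\beta_{n^\pm}\alpha_{(n-1)^\pm}=0$ reflecting that $M(n^\pm)$ is simple so the projection summand is absent). The main obstacle is purely the sign bookkeeping in $\tau$ and in the irreducible maps; once this is settled the proposition follows at once from the general theorem of \cite[p.\,232]{ars}, which simultaneously recovers Proposition~\ref{prop:Dn-skew}'s description of $D_n$ by quiver and relations.
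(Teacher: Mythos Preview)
Your approach coincides with the paper's: its proof is just the brief paragraph immediately preceding the proposition, which records that $S_n$ is self-injective Nakayama, that the irreducible maps are the radical inclusions and socle quotients, and then invokes \cite[p.~232]{ars}. You have simply supplied the details the paper leaves implicit.

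Two small slips in your bookkeeping are worth flagging, though neither affects the argument. First, the Nakayama permutation of $S_n$ swaps $L^+$ and $L^-$ only when $n$ is even (for odd $n$ the socle of $e_0S_n$ is again $L^+$); your conclusion $\tau M(i^\pm)\cong M(i^\mp)$ is nonetheless correct, as one sees directly from the almost split sequence for a uniserial module. Second, the induced surjection sends $b_i^{(a)}\otimes e_j\mapsto b_i^{(a+1)}\otimes e_j$, so $M(a^\pm)\twoheadrightarrow M((a+1)^\pm)$ \emph{preserves} the sign, while the inclusion $M(a^\pm)\hookrightarrow M((a-1)^\mp)$ switches it. With that correction the AR quiver you obtain matches $\Gamma_n$ only after relabelling (swap $+$ and $-$ at every second level); this is precisely what the paper means by ``with appropriate labelling''.
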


\end{appendix}

%%%%%%%%%%%%%%%%%%%%%%%%%%%%%%%%%%%%%%%%%%
%

\end{document}